\documentclass[12pt,a4paper,final,reqno]{amsart}


\newcommand{\includefigure}[2]{\includegraphics[#1]{#2}}


\usepackage{amsmath}
\usepackage{amsthm}
\usepackage{amssymb}
\usepackage{graphicx}
\usepackage{mathrsfs}
\usepackage[english]{babel}
\usepackage[latin1]{inputenc}
\usepackage[T1]{fontenc}
\usepackage{amsrefs}
\usepackage[usenames,dvipsnames]{xcolor}
\usepackage{enumitem}
\usepackage{bbm}
\usepackage{float}
\usepackage{tikz}
\usepackage{multirow}
\usepackage{mathtools}
\usepackage{hyperref}


\numberwithin{equation}{section}

\newtheorem{theorem}{Theorem}[section]
\newtheorem{proposition}[theorem]{Proposition}

\newtheorem{lemma}[theorem]{Lemma}

\newtheorem{remark}[theorem]{Remark}
\newtheorem{example}[theorem]{Example}

{\theoremstyle{definition}
{

\newtheorem{defn}[theorem]{Definition}
}}



\newcommand{\cal}{\mathcal}

\newcommand{\A}{{\cal C}^\omega}

\newcommand{\CC}{{\cal C}}

\newcommand{\FF}{{\cal F}}

\newcommand{\HH}{{\cal H}}

\newcommand{\RR}{{\cal R}}

\newcommand{\UU}{{\mathscr D}}

\newcommand{\fX}{{\mathfrak X}^\omega}

\newcommand{\Nn}{{\mathbb{N}}}

\newcommand{\Rr}{{\mathbb{R}}}

\newcommand{\Zz}{{\mathbb{Z}}}


\newcommand{\inter}{{\rm int}}


\def\e{\mathrm{e}}

\usetikzlibrary{decorations.markings}

\tikzset{->-/.style={decoration={
  markings,
  mark=at position #1 with {\arrow{>}}},postaction={decorate}}}

\renewcommand{\d}{d}

\newcommand{\Mat}{{\rm Mat}}

\newtheorem*{theorem*}{Theorem}

\newcommand{\abs}[1]{\left| #1\right|}
\newcommand{\norm}[1]{\left\| #1\right\|}

\newcommand{\id}  {\operatorname{id}}


\newcommand{\comment}[1]{}

\newcommand{\Bscr}{\mathscr{B}}

\newcommand{\Dscr}{\mathscr{D}}

\newcommand{\Poin}[2]{P_{#2}}

\newcommand{\skPoin}[2]{\pi_{#2}}
\newcommand{\skDomPoin}[2]{\Pi_{#2}}
\newcommand{\etal}{\textit{et al}. }
\newcommand{\ie}{\textit{i}.\textit{e}., }

\newcommand{\Ker}{\mathrm{Ker}}
\newcommand{\Eq}{\mathrm{Eq}}

\begin{document}
\selectlanguage{english}

\title[Asymptotic Poincare Maps]{Asymptotic Poincar\'e Maps along the edges of Polytopes}
\subjclass[2010]{92D15, 37D10, 37C29}
\keywords{Flows on polytopes, Asymptotic dynamics, Heteroclinic networks, Poincar\'e  maps, Hyperbolicity, Chaos, Evolutionary game theory}

\author[Alishah]{Hassan Najafi Alishah}
\address{Departamento de Matem\'atica, Instituto de Ci\^encias Exatas\\
Universidade Federal de Minas Gerais \\
Belo Horizonte, 31270-901, Brazil}
\email{halishah@mat.ufmg.br}

\author[Duarte]{Pedro Duarte}
\address{Departamento de Matem\'atica and CMAF \\
Faculdade de Ci\^encias\\
Universidade de Lisboa\\
Campo Grande, Edificio C6, Piso 2\\
1749-016 Lisboa, Portugal 
}
\email{pmduarte@fc.ul.pt}

\author[Peixe]{ Telmo Peixe}
\address{REM and CEMAPRE \\
ISEG Lisbon School of Economics \& Management\\
Universidade de Lisboa\\
Rua do Quelhas, 6 \\
1200-781 Lisboa, Portugal 
}
\email{telmop@iseg.ulisboa.pt}

\begin{abstract}
	For a class of flows on polytopes, including many examples from Evolutionary Game Theory, we  describe a piecewise linear model which encapsulates the asymptotic dynamics along the heteroclinic network formed out of the polytope's vertexes and edges. This  piecewise linear flow is easy to compute even in higher dimensions, which allows the  usage of numeric algorithms to find invariant dynamical structures such as periodic, homoclinic or heteroclinic orbits, which if robust persist as invariant dynamical structures of the original flow. We apply this method to prove the existence of chaotic behavior in some Hamiltonian replicator systems on the five dimensional simplex.
 \end{abstract}

\maketitle
\tableofcontents


\section{Introduction}
\label{intro}


Given a flow on a polytope, leaving  all  its faces invariant, we  call  {\em flowing edge} to any edge of the polytope  consisting of a single orbit flowing between the two endpoint singularities. The purpose of this paper is to present a new method to encapsulate and analyze the asymptotic dynamics of the flow along the heteroclinic network formed by the flowing edges and the vertex singularities of the polytope.

Natural examples of such dynamical systems arise in Evolutionary Game Theory (EGT), which was the background motivation for the present work.  Even though the phase space of the dynamical systems arising from EGT are usually simplexes or products of simplexes, we state our results in the more general context of simple polytopes in order to have a mathematically comprehensive approach which is also open to new examples. 

The replicator equations, introduced by P. Taylor and L. Jonker~\cite{TJ1978}, as well as the polymatrix replicator equation, studied by authors in~\cite{AD2014,ADP2015}, induce flows on simple polytopes in the scope of applicability of the present results. Polymatrix replicator equations extend the class of bimatrix replicator equations~\cite{SS1981,SSHW1981}. 


The use of cross-sections and return maps
to analyze dynamics along heteroclinic cycles is an old tool going back to Poincar\'e. In the context of EGT
there is already an extensive literature on the study
of boundary he\-te\-ro\-clinic cycles~\cite{Br1994,Ch1995,Ch1996,Hof1987,Hof1994,KS1994}. 
The dynamics along he\-te\-ro\-clinic networks has also been widely studied in the context of flows with symmetries, e.g.~\cite{KM95,KM04} or~\cite[Chapter 6]{Field}. 
All these studies use the Poincar\'e map itself~\cite[Chapter 6]{MR1691840} or its linearization~\cite[Chapter 17]{MR1635735}  usually for a stability or bifurcation analysis of families of vector fields. 
However the flows on polytopes addressed in our work have no symmetries in general and hence lie outside the scope of these studies.
In fact, the heteroclinic networks that we consider are robust because our focus is on flows that leave invariant all faces of the polytope.

Our 
method developed to analyze the dynamics along the vertex-edge heteroclinic network  is
applicable to a wide class of flows on polytopes, with few \textit{apriori} hypothesis on their dynamics.
For instance, some of the conditions in the previously referenced works can be reformulated in our setting as
appropriate assumptions on the  (computable) asymptotic dynamics. 
Moreover, as mentioned below, our method applies to heteroclinic networks with degenerate saddles,
in a setting which, as far as we know, is not covered by existing results.

The method presented here was first announced (without proofs) by the second author in~\cite{Du2011}. In his PhD thesis~\cite{Peixe2015} the third author
applied this method to the class of polymatrix replicators.
 In the following paragraphs, we give an overview of the method.

The Poincar\'e map defined along a heteroclinic or homoclinic orbit  is a composition of two types of maps,  the global  and the local Poincar\'e maps. 
The global ones, $P_\gamma$, are defined in  tubular neighborhoods of the flowing edges $\gamma$, see
Figure~\ref{local-global-poincare}. They map points between two cross sections $\Sigma_\gamma^-$ and $\Sigma_\gamma^+$ transversal to the flow along $\gamma$. 
The local ones, $P_v$, are defined in  neighborhoods of vertex singularities  $v$. For any pair of flowing  edges  $\gamma,\gamma'$ 
such that $v$ is both the endpoint of $\gamma'$ and the start-point of $\gamma$, the local map $P_v$ takes points from $\Sigma_{\gamma'}^+$
to  $\Sigma_{\gamma}^-$.

\begin{center}
\begin{figure}[h]
\includegraphics[width=6cm]{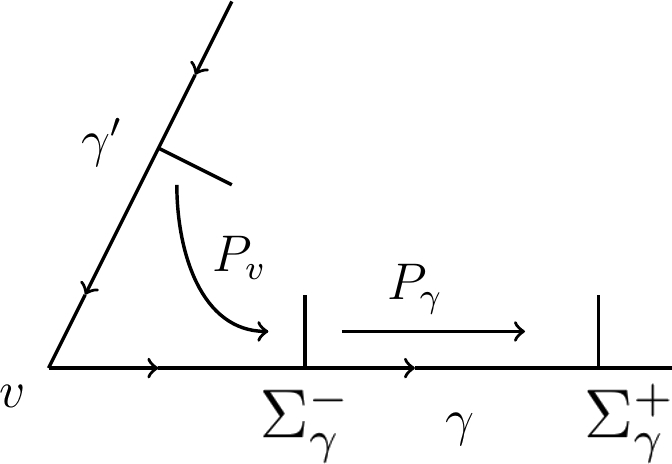}
\caption{The local, $P_{v}$, and global, $P_{\gamma}$, 
Poincar\'e maps along a heteroclinic orbit.} \label{local-global-poincare}
\end{figure}
\end{center}

The nonlinearities of the  global Poincar\'e maps $P_\gamma$ fade  away asymptotically, as one approaches the heteroclinic orbit, becoming  identity maps in the limit. 
Local data is extracted from the underlying vector field $X$ at the vertexes, which will be referred to as the {\em skeleton character} of $X$. The asymptotic behavior of the flow is completely determined by this  skeleton character and in particular the local Poincar\'e maps $P_v$  can be asymptotically linearized (in the sense that  near the vertexes, trajectories become lines after a change of variables) in terms of this data. 

In the generic case, the skeleton character of $X$ at a vertex $v$ consists of the eigenvalues of $X$ along the (eigen) directions of  edges through $v$. The signs of these eigenvalues were used to form   the characteristic matrix discussed in \cite[Chapter 17]{MR1635735}. The skeleton characters considered here are a bit more general.
The skeleton character of a \textit{corner} (a vertex $v$ and an edge containing it) may be non-zero even if the associated eigenvalue  is zero. This makes the method  applicable in certain degenerate situations with many zero eigenvalues.
One such example is the compatification of a Hamiltonion Lokta-Voltra system, where all eigenvalues along directions transversal  to the facet at infinity vanish~\cite{Du2011}.

To stage the asymptotic piecewise linear dynamics we introduce a geometric space  referred to as the {\em dual cone} of a polytope. This space is a subset of $\Rr^{F}$, where $F$ is the set of the polytope's facets. 
The dual cone will be a union of sectors\footnote{ We call sector to any closed convex cone bounded by $d$ transversal facets, where $d$ is the sector's dimension.}, one for each vertex of the polytope, see Figure~\ref{dualcone1}.
Given a vector field $X$  on a 
$d$ dimensional polytope $\Gamma^d\subset \Rr^d$, we describe a rescaling change of coordinates $\Psi_\epsilon^X$, depending on a blow-up parameter $\epsilon$, see Figure~\ref{asym-linearization}. This change of coordinates  maps  tubular neighborhoods of edges and vertexes to  the dual cone of $\Gamma^d$. 
For instance,  the tubular neighborhood $N_v$ of a vertex $v$ is defined by
$$ N_v:=\{ p\in\Gamma^d\colon  0\leq x_j(p) \leq 1 \,\text{ for }\, 1\leq j\leq d  \} $$
where $(x_1,\ldots, x_d)$ is a system of affine coordinates around $v$
which assigns   coordinates $(0,\ldots, 0)$ to $v$ and such that the hyperplanes $x_j=0$ are precisely the facets of the polytope through $v$. The sets $\{x_j=0\}\cap N_v$  are
referred to as  outer facets of $N_v$. The remaining facets of $N_v$, defined by equations like $x_i=1$, are called the inner facets of $N_v$.
The previous cross sections $\Sigma_\gamma^\pm$  are chosen to match these inner facets of the neighborhoods  $N_v$.

\begin{center}
	\begin{figure}[h]
		\includegraphics[width=10cm]{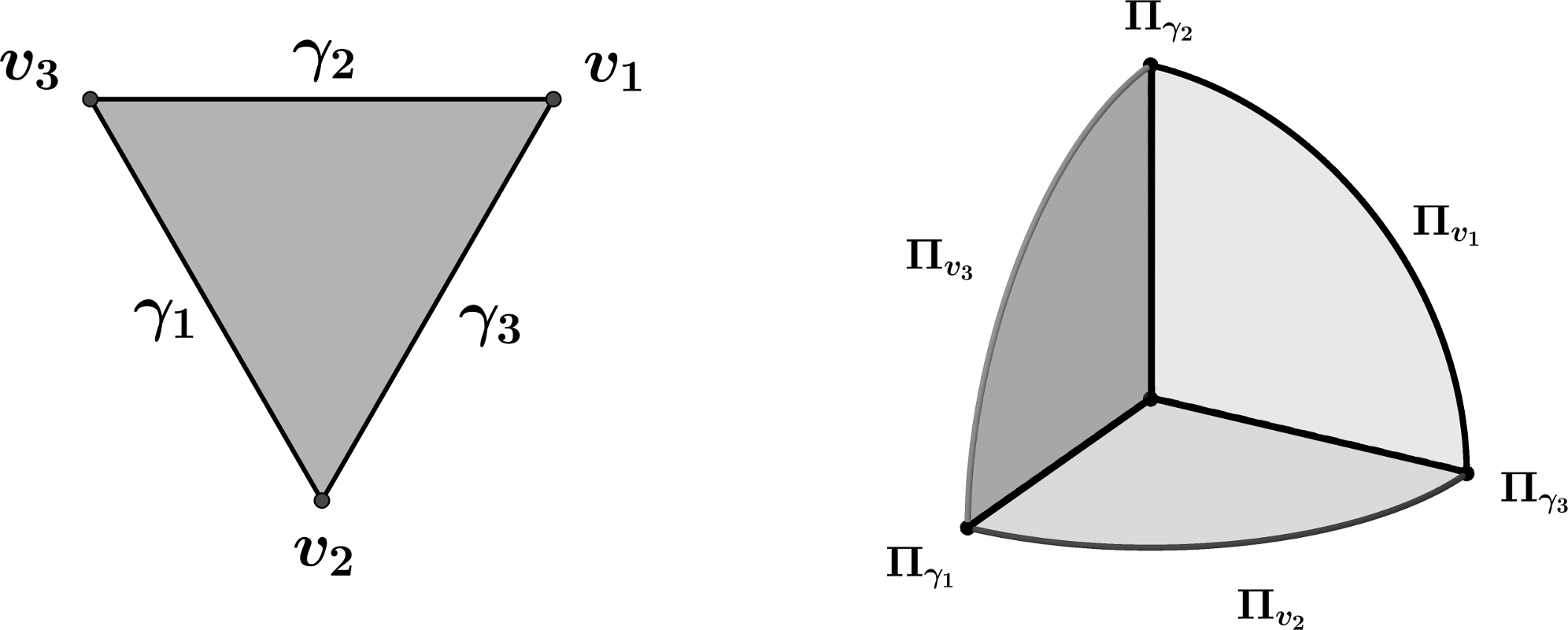}
		\caption{Dual cone of a triangle in $\Rr^{F}$} \label{dualcone1}
	\end{figure}
\end{center}


\begin{center}
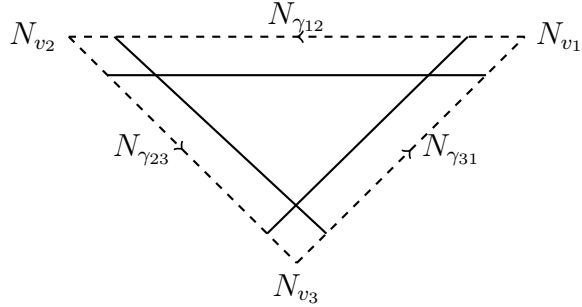
\begin{figure}[h]
\begin{tikzpicture}[scale=.3]
\draw[->-=.5,dashed,thick] (15,-20)--(25,-10);\draw[->-=.5,dashed,thick] (5,-10)--(15,-20);\draw[->-=.5,dashed,thick] (25,-10)--(5,-10);
\draw[thick] (13.7,-18.7)--(22.5,-10);
\draw[thick] (23.3,-11.7)--(6.7,-11.7);
\draw[thick] (16.3,-18.7)--(7,-10);
\node[below] at (15,-20) {$N_{v_3} $};\node[right] at (25,-10) { $N_{v_1} $};\node[left] at (5,-10) {$N_{v_2} $};
\node[above] at (15,-10.3) {$N_{\gamma_{12}} $};
\node[left] at (10,-15) {$N_{\gamma_{23}} $};
\node[right] at (20,-15) {$N_{\gamma_{31}} $};
\end{tikzpicture}
\caption{Tubular neighborhoods along the edges in a two dimensional polytope (dashed lines are the dges)} \label{tubular}
\end{figure}
\end{center}
The rescaling change of coordinates $\Psi^X_\epsilon$
takes points in  $N_v$  to points   in the sector $\Pi_v$ of all $y=(y_\sigma)_{\sigma\in F}\in \mathbb{R}^{F}$ with $y_\sigma\geq 0$ and 
where $y_\sigma=0$ whenever the facet $\sigma$ does not contain $v$.
In the generic case, assuming we have enumerated $F$  so that the  facets through $v$ are precisely   $\sigma_1,\ldots, \sigma_d$,  
 the map $\Psi^X_\epsilon$ is defined on the neighborhood $N_v$  by
$$ \Psi^X_\epsilon(q):=(-\epsilon^2\,\log x_1(q), \ldots,
-\epsilon^2\,\log x_d(q),0,\ldots, 0 )$$
where $(x_1,\ldots, x_d)$ stand for the system of affine coordinates introduced above.
Similarly, given an edge $\gamma$, the map  $\Psi^X_\epsilon$
takes points in the tubular neighborhood $N_\gamma$  of  $\gamma$ to points   in the sector $\Pi_\gamma$ of all $(y_\sigma)_{\sigma\in F}\in \mathbb{R}^{F}$ with $y_\sigma\geq 0$ and where $y_\sigma=0$ whenever the facet $\sigma$ does not contain $\gamma$.  The map $\Psi^X_\epsilon$ sends interior facets of $N_v$ and $N_\gamma$, respectively, to boundary facets of
$\Pi_v$ and $\Pi_\gamma$    while  it takes outer facets 
of $N_v$ and $N_\gamma$   to infinity. 
Figure~\ref{tubular} represents the tubular neighborhoods of vertexes and edges of a triangle, where dashed lines stand for   the outer boundary facets  of these neighborhoods.
Figure~\ref{after-log}
depicts the range of the map $\Psi^X_\epsilon$ where
the dashed lines stand for infinity.

\begin{center}
\begin{figure}[h] 
\begin{tikzpicture}[scale=.3]
\draw[thick] (15,-20)--(25,-10);
\draw[thick] (5,-10)--(15,-20);
\draw[thick] (25,-10)--(5,-10);

\draw[thick] (15,-20)--(20,-25);\draw[thick] (15,-20)--(10,-25);
\path[fill=gray] (15,-20)--(20,-25)--(10,-25)--(15,-20);\node[below] at (15,-25) {$\Pi_{v_3}$};
\draw[thick] (25,-10)--(25,-5);\draw[thick] (25,-10)--(30,-15);
\path[fill=gray]  (25,-10)--(25,-5)--(30,-15)--(25,-10);
\node[right] at (27,-10) {$\Pi_{v_1}$};
\draw[thick] (5,-10)--(0,-15);\draw[thick] (5,-10)--(5,-5);
\path[fill=gray] (5,-10)--(0,-15)--(5,-5)--(5,-10);
\node[left] at (1,-10) {$\Pi_{v_2}$};
\draw[->-=.5,dashed,thick] (20,-25)--(30,-15);\node[right] at (25,-20) {$\Pi_{\gamma_{31}}$};
\draw[->-=.5,thick,dashed] (25,-5)--(5,-5);\node[above] at (15,-5) {$\Pi_{\gamma_{12}}$};
\draw[->-=.5,dashed,thick] (0,-15)--(10,-25);\node[left] at (5,-20) {$\Pi_{\gamma_{23}}$};
\end{tikzpicture}
\caption{Range of the rescaling change of coordinates in the dual cone } \label{after-log}
\end{figure}
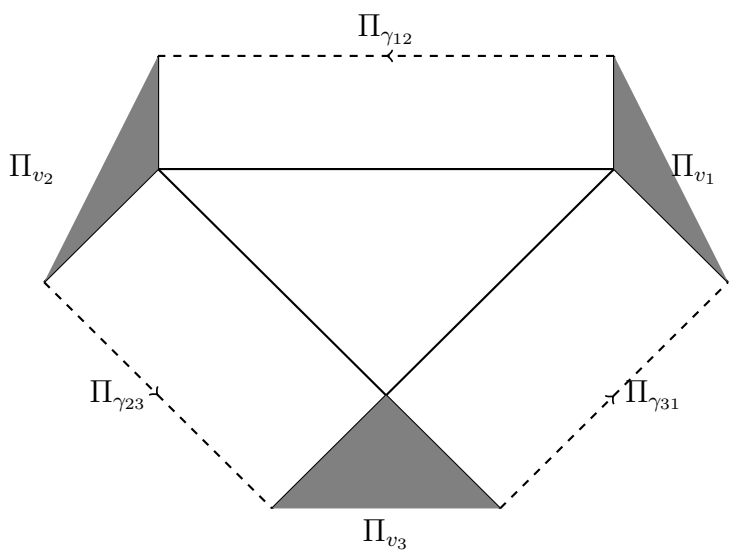
\end{center}

As the rescaling parameter $\epsilon$ tends to $0$, 
 the rescaled push-forward $\epsilon^{-2}\,(\Psi^X_\epsilon)_\ast X$ of the vector field $X$ converges to a constant vector field $\chi^v$  on each sector $\Pi_v$. This means that asymptotically, as $\epsilon\to 0$, trajectories become lines in the coordinates   $(y_\sigma)_{\sigma\in F}=\Psi^X_\epsilon$.
 Given a flowing edge $\gamma$ between vertexes $v$ and $v'$, the map $\Psi^X_\epsilon$ over $N_\gamma$ depends only on the  coordinates 
transversal to $\gamma$. Moreover,
as  $\epsilon\to 0$ the global Poincar\'e map $P_\gamma$ converges to the identity map in the coordinates $(y_\sigma)_{\sigma\in F} =\Psi^X_\epsilon$.
 Hence the sector
$\Pi_\gamma$ is  naturally  identified as the common facet between the sectors $\Pi_v$ and $\Pi_{v'}$, see Figure~\ref{after-log}.
In fact, from the above definitions one gets that  
$ \Pi_\gamma= \Pi_v\cap \Pi_{v'}$. Hence the asymptotic dynamics along the vertex-edge heteroclinic network is completely determined by the vector field's geometry near the vertex singularities and can be 
described by a piecewise constant vector field $\chi$ on the dual cone, whose components are precisely those of the skeleton character of  $X$. We  refer to this piecewise constant vector field as the {\em skeleton vector field} of $X$.  This  vector field $\chi$ induces a piecewise linear flow on the dual cone   whose dynamics can be  computationally explored.

The flows associated with these piecewise constant vector fields are in general  open dynamical systems. Some of them may have no recurrence at all. For instance attracting or repelling vertex singularities, or the existence of attracting or repelling singularities interior to non flowing edges,
may divert trajectories and prevent the existence of cycles in the  vertex-edge heteroclinic network.

We use  Poincar\'e maps to analyze the asymptotic dynamics of the flow of $X$.
For that we consider a subset $S$ of flowing edges such that every vertex-edge heteroclinic cycle goes through at least one edge in $S$. 
We call structural set to any such set $S$, see Definition~\ref{structural:set}.
Then the flow of $X$ induces a Poincar\'e  map $P_S$
on the system of cross sections $\Sigma_S:= \cup_{\gamma\in S}\Sigma^-_\gamma$. Each branch of the Poincar\'e  map $P_S$ is associated with a vertex-edge heteroclinic path  starting with an edge in $S$ and ending at its first return to another edge in $S$. These heteroclinic paths are referred to as branches of $S$. Similarly, the flow of the skeleton vector field $\chi$ induces a  first return map
$\pi_S:D_S\subset \Pi_S\to\Pi_S$ on the system of cross section $\Pi_S:= \cup_{\gamma\in S}\Pi_\gamma$ of the dual cone. This map $\pi_S$,
referred to as the \textit{skeleton flow map}, is piecewise linear and its domain is a finite union of open convex cones, one for each branch of $S$.
Proposition~\ref{partition} provides a simple sufficient condition 
for the domain $D_S$ of $\pi_S$ to have full Lebesgue measure in $\Pi_S$. In this sense  $\pi_S$ becomes a closed dynamical system.  We emphasize that the hypothesis of this proposition is not a requisite for the applicability of our method.
Violation of the  hypothesis simply allows the existence of open convex cones  in $\Pi_S\setminus D_S$ corresponding to orbits   which never return to $\Pi_S$.

The main result of this manuscript asserts that the 
Poincar\'e map $P_S$ in the rescaled coordinates
$\Psi^X_\epsilon$ converges in the $C^\infty$ topology to
the skeleton flow map $\pi_S$. 
More precisely, the following limit holds
$$ \lim_{\epsilon\to 0 }  \Psi^X_\epsilon \circ \Poin{X}{S}\circ
(\Psi^X_\epsilon)^{-1} = \pi_S  $$
with uniform convergence of the map and its derivatives over any compact set contained in the (open) domain $D_S\subset \Pi_S$,
see  Theorem~\ref{asymp:main:theorem}.

 Consider now, for each facet $\sigma$ of the polytope, an affine function $\Rr^d\ni q\mapsto x_\sigma(q)\in\Rr$ which vanishes on $\sigma$  and is strictly positive on the rest of the polytope. With this family of affine functions we can present the polytope  as 
 $\Gamma^d=\cap_{\sigma\in F} \{  x_\sigma \geq 0  \}$.
In the generic case any function $h:\inter(\Gamma^d)\to\Rr$ of the form
$$h(q)=\sum_{\sigma\in F} c_\sigma\, \log x_\sigma(q)\quad (c_\sigma\in\Rr) $$
rescales to the following  piecewise linear function on the dual cone  
$$\eta(y):=\sum_{\sigma\in F} c_\sigma\, y_\sigma $$
in the sense that
$\eta = \lim_{\epsilon\to 0} \epsilon^{-2}\, (h\circ
(\Psi^X_\epsilon)^{-1} ) $.
When all coefficients $c_\sigma$ have the same sign
then $\eta$ is a proper function on the dual cone.
This means in particular that all levels of $\eta$ are compact sets.
Finally, if the function $h$ is invariant under the flow
of  $X$, \ie
$h\circ P_S=h$ then the piecewise linear function
$\eta$ is also invariant under th skeleton flow,
\ie $\eta\circ \pi_S=\eta$.
Thus integrals of motion of conservative systems, which have the previous form, carry over as piecewise linear  integrals of the skeleton flow.

As a general principle, any robust structure invariant under the skeleton flow map persists as an invariant structure for the Poincar\'e map  of the original flow. 
Since the former can be detected through linear algebra tools (e.g. algorithms for computing eigenvalues and eigenvectors of the skeleton flow map's branches)
this approach provides a method to analyze the dynamics of the original flow along the vertex-edge heteroclinic network, a method which can be equally well applied to higher dimensional cases. For Hamiltonian systems, to be discussed in a sequel paper, their conservative nature (in the context of Poisson geometry)  is inherited by the skeleton flow map. In these cases the analysis of the dynamics reduces to the dimension of the symplectic leaves. 
We provide here a couple of Hamiltonian examples where this method  proves the co-existence of chaotic behavior with elliptic islands.

Embedding the dual cone in the Euclidean space
$\Rr^{F}$ is formally and computationally convenient.
 Poincar\'e maps of the skeleton vector field along paths of the heteroclinic network  are  represented  by  $F\times F$ flow matrices on convex cone domains which can be explicitly determined. Both these flow matrices and their convex cone domains are expressed in terms of
the vector field's skeleton character.

Next we provide an alternative and more geometric realization of the
dual cone  as the {\em normal fan} of a polytope~\cite[Chapter 7]{Ziegle1995}. A (complete) fan is roughly  a family  of polyhedral  convex closed cones\footnote{ The precise definition of fan requires that	the intersection of any two family members is either empty of else  a common face  and that faces of family members are also in the family. } 
 in some Euclidean space $\Rr^d$ with disjoint interiors and such that their union is the whole space.  The {\em normal cone} of a polytope $\Gamma^d\subset \Rr^d$ at a vertex $v$ is the closed convex cone
$$ \Pi_v:=\{ u\in\Rr^d: u\cdot (q-v)\leq 0, \forall\; q\in\Gamma^d \} .$$
The family of all normal cones of a polytope's vertexes (with all their faces) is always a complete fan, referred to as its {\em normal fan}.
Figure~\ref{fan} shows the normal fan of a triangle in $\Rr^2$.   Given a vertex $v$ of $\Gamma^d$ let $(x_1,\ldots, x_d)$ be
the previously mentioned  system of affine coordinates around $v$.
Let $\vec n_i\in\Rr^d$ be the unit outward normal to the facet of
$\Gamma^d$ represented by the equation $x_i=0$. Then the restriction of the rescaling map $\Psi^X_\epsilon$ to the neighborhood  $N_v$
with values in the normal cone $\Pi_v$ is defined in the generic case by 
$$ \Psi^X_\epsilon(q):= -\epsilon^2\,\sum_{j=1}^d
\log x_i(q)\, \vec n_j . $$
In this construction, the skeleton vector field of $X$ is a piecewise constant vector field on $\Rr^d$, \ie  one which is constant on each normal cone $\Pi_v$ for a vertex $v$ of $\Gamma^d$.
\begin{center}
	\begin{figure}[h]
		\includegraphics[width=8cm]{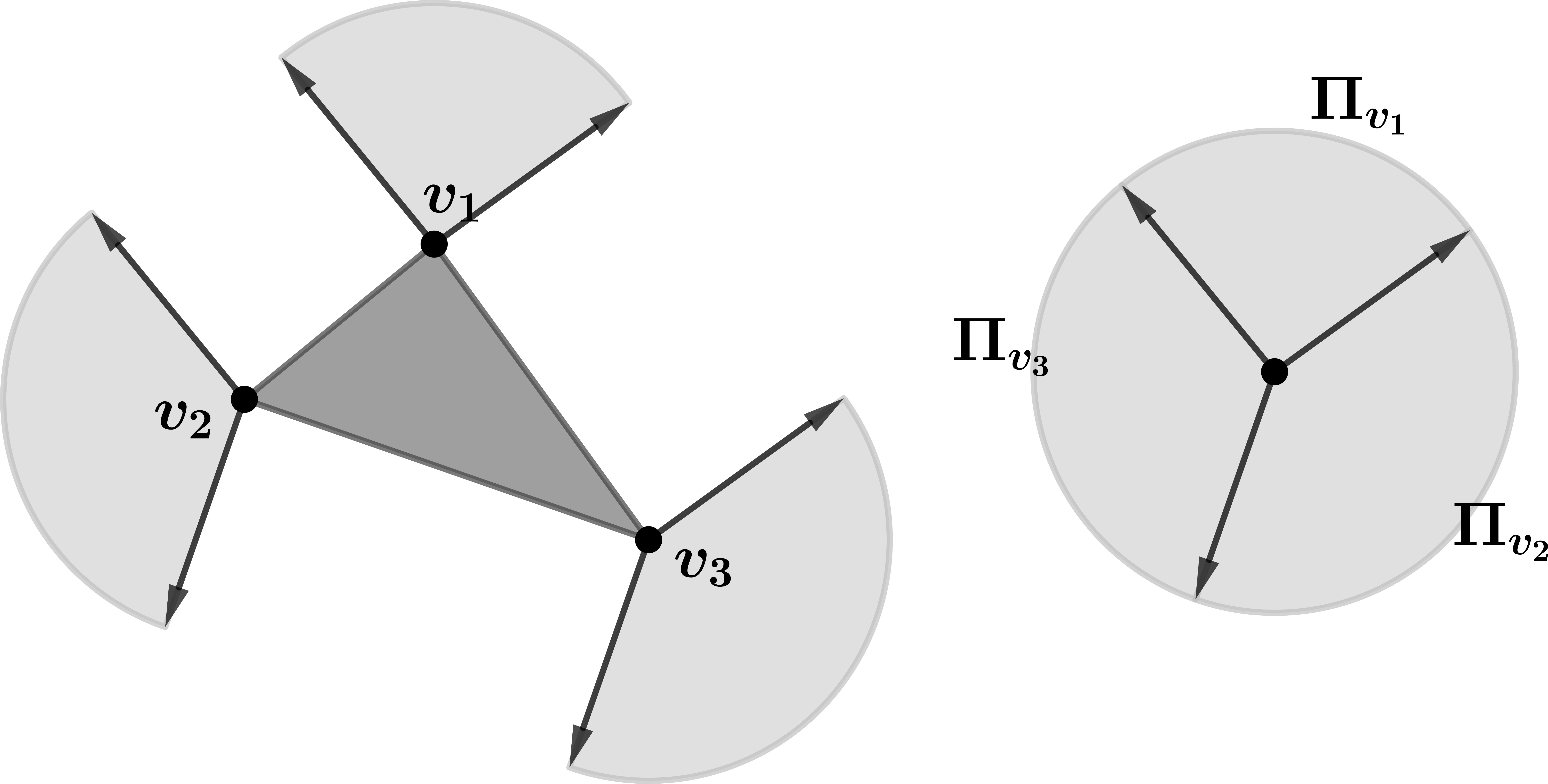}
		\caption{ Normal fan of a triangle in $\Rr^2$.  } 
		\label{fan}
	\end{figure}
\end{center}
Figure~\ref{cons222}  illustrates  a  Hamiltonian vector field $X$ (a
polymatrix replicator system) on the standard $3$-dimensional cube,
with a proper Hamiltonian function $h$.
The left  of Figure~\ref{cons222} depicts the cube with a few orbits of $X$ on some  level set of $h$. 
As mentioned above, the function $h$ rescales to a 
 piecewise linear proper function $\eta$ on the normal fan of the  cube. All level sets of  $\eta$ are octahedra (the cube's dual).
 On the right of Figure~\ref{cons222},  a few orbits of the skeleton flow on some level of the invariant function $\eta$ are shown.
\begin{center}
	\begin{figure}[h]
		\includegraphics[width=12cm]{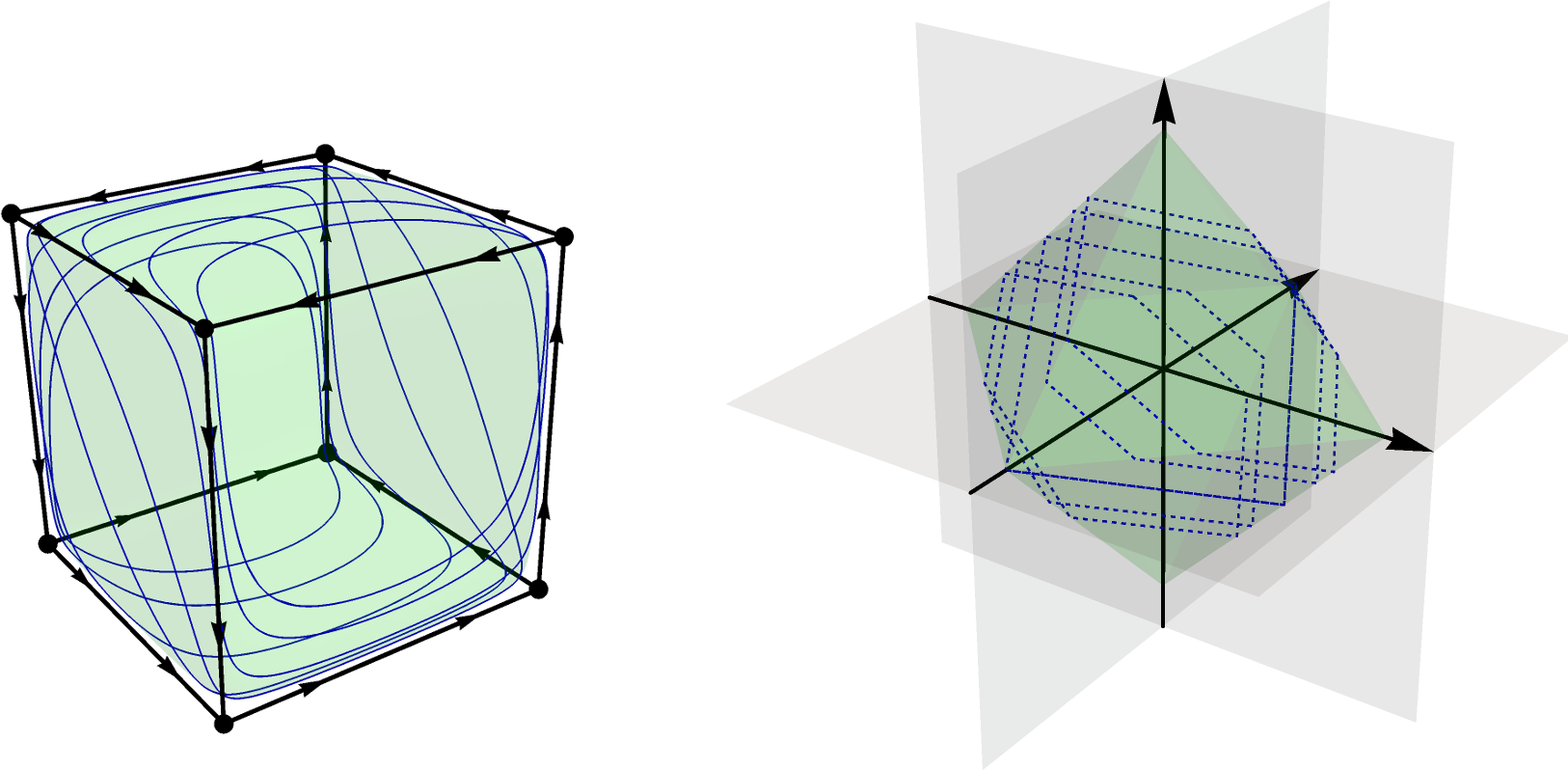}
		\caption{Asymptotic linearization on the normal fan.}\label{asym-linearization}
		\label{cons222}
	\end{figure}
\end{center}


\begin{center}
\begin{figure}[h]
\includegraphics[width=11cm]{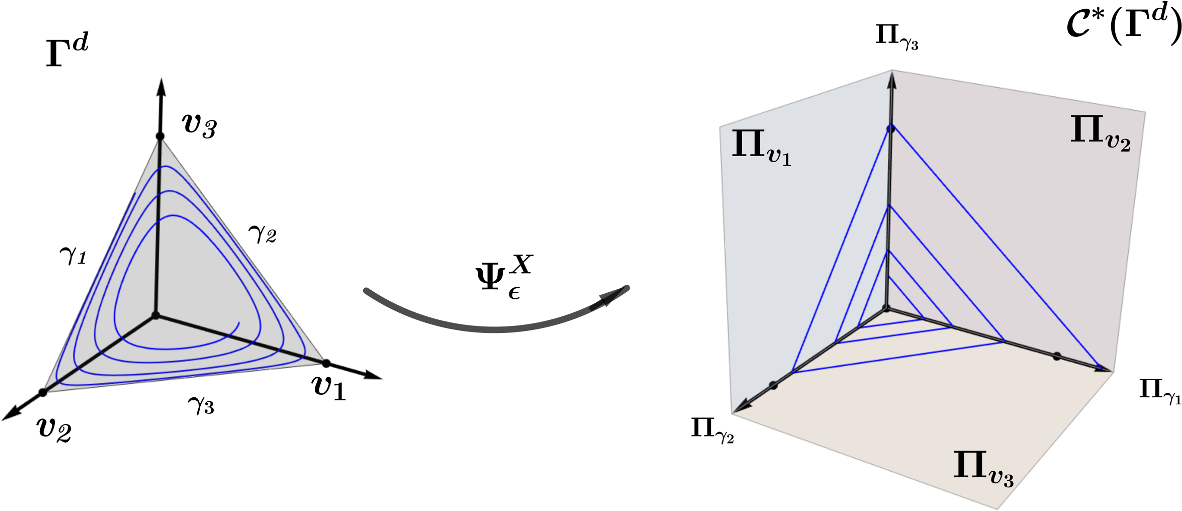}
\caption{Asymptotic linearization on the dual cone.
The left image represents an orbit on the simplex $\Delta^2$ and the right one
the corresponding image under $\Psi_\epsilon^X$ on the dual cone.
}\label{asym-linearization} 
\end{figure}
\end{center}

All graphics of this manuscript were produced with \textit{Mathematica} and \textit{Geogebra} software. 
We provide the \textit{Mathematica} code~\cite{MathematicaCodeDAE} used to analyze the examples and to make the graphics. This code can be used to  numerically analyze specific examples, providing hints for analytic results. 

This paper is organized as follows.
In Section~\ref{polytopes}  we define polytopes and all their associated notations, terminology and concepts.
In Section~\ref{vectorfields}  we introduce the class of vector fields on polytopes,  the skeleton character of a vector field and other related concepts.
In Section~\ref{rescaling}, we define the family of rescaling coordinates $\Psi_\epsilon$ and the dual cone of a polytope.
In Section~\ref{skeleton} we introduce the class of skeleton vector fields (piecewise constant vector fields) on the dual cone, whose dynamics encapsulate the asymptotic behavior of the original non-linear  flow.
We also define the concept of structural set  and characterize those vector fields whose skeleton flow map is a closed dynamical system.
In Section~\ref{asymptotics} we define the  Poincar\'e return maps 
of a vector field, and then state and prove the main thorem, Theorem~ \ref{asymp:main:theorem}.
In Section~\ref{AFI} we introduce a probe space of integrals of motion, describing their asymptotics on the dual cone of the polytope.
We also describe a sufficient condition on the skeleton flow map for the existence of horse-shes regarding  the dynamics of the original vector field, see  Theorem~ \ref{thm horse-shoes}.
In Section~\ref{dyn-analysis} we summarize a procedure to
detect chaotic behavior by checking the assumptions of Theorem~ \ref{thm horse-shoes}.
In Section~\ref{example section} we describe a couple of replicator Hamiltonian  examples in the five dimensional simplex, where the previous procedure  is applied.
Finally, in Section~\ref{furtherwork} we discuss a few possible developments of this work.
 
%

\section{Polytopes}
\label{polytopes}
In this section we provide preliminary definitions and notations
about polytopes.

Given a convex subset $K\subseteq \Rr^N$,
we call \textit{affine support} of $K$ to the the affine subspace spanned by $K$. The \textit{dimension} of $K$, $\dim(K)$, is by definition  
the dimension of its affine support. 

We call {\em polytope} to any compact intersection of finitely many half-spaces.
A {\em face} of a polytope $K$ is any non-empty intersection of $K$ with a hyperplane that does not separate $K$, 
i.e., such that $K$ does not have points on both open sides of the hyperplane. 
A face of $K$ with dimension $0\leq j\leq \dim(K)$ will be referred to as a $j$-face of $K$.
A {\em vertex} of $K$ is any $0$-face of $K$. An {\em edge} of $K$ is any $1$-face of $K$. 
We call {\em facet} of $K$
to any $(d-1)$-face where $d=\dim(K)$. A polytope $K$ of dimension $d$ is called {\em simple} if every vertex belongs exactly to $d$ facets (and hence also to $d$-edges). From now on all polytopes will be simple polytopes.

\begin{defn}\label{defn:polytope}
Given a simple polytope $\Gamma^d$ with affine support $E\subset\Rr^N$ and  $d=\dim(E)$  
 we call \emph{defining family} of $\Gamma^d$ any family of affine functions $\{f_i:E\to\Rr\}_{i\in I}$ such that
\begin{itemize}
\item[(a)]$\Gamma^d=\cap_{i\in I}f_i^{-1}([0,+\infty[)$.
\item[(b)] $\Gamma^d\cap f_i^{-1}(0)\neq\emptyset\quad\quad \forall i\in I$.
\item[(c)] Given $J\subseteq I$ such that $\Gamma^d\cap(\cap_{j\in J}f_j^{-1}(0))\neq\emptyset$, the linear $1$-forms $(\d f_j)_p$ are linearly independent at every point $p\in\cap_{j\in J}f_j^{-1}(0)$.
\end{itemize}
\end{defn}

Given $J\subset I$, because of item (c),
if non-empty, the intersection $\Gamma^d\cap(\cap_{j\in J}f_j^{-1}(0)) $ is a $(d-\abs{J})$-dimensional face of the polytope.
In particular for each $i\in I$, the set  $\sigma_i:=\Gamma^d\cap f_i^{-1}(0)$
is   a \textit{facet} of the polytope.
We denote the sets of vertexes, edges and facets, respectively, by $V$, $E$ and $F$. Since
$F=\{\sigma_i : i\in I\}\simeq I$, we will assume from now on that
the defining family of $\Gamma^d$ is indexed in $F$ in a way that
$\sigma= \Gamma^d\cap f_\sigma^{-1}(0)$, for all $\sigma\in F$.

Given a vertex $v$, the sets $F_v$ and $E_v$ are defined to be the set of all facets, respectively edges,  which contain $v$. 

\begin{remark}\label{notation-coordinate}
By (c) of Definition~\ref{defn:polytope} at any given vertex $v$, the co-vectors $(df_i)_v$ are linearly independent. So in a small enough neighborhood of $v$ the functions $\{f_\sigma\}_{\sigma\in F_v}$ can be used as a system of coordinates. 
\end{remark}

\begin{remark}
We have adopted here the standard terminology where
a {\em polyhedron} is any convex set bounded by finitely many hyperplanes and a {\em polytope} is a compact polyhedron,
see for instance~\cite{Ziegle1995}. We note that in~\cite{Du2011} the term `polyhedron' was used to mean compact polyhedron.
\end{remark}

The elements of the set 
$$C:=\{\, (v,\gamma,\sigma)\in V\times E\times F\,\colon\,  \gamma\cap \sigma= \{v\}\, \}$$ are referred to as \emph{corners}, see Figure~\ref{corner}.
\begin{center}
\begin{figure}[h]
\includegraphics[width=5cm]{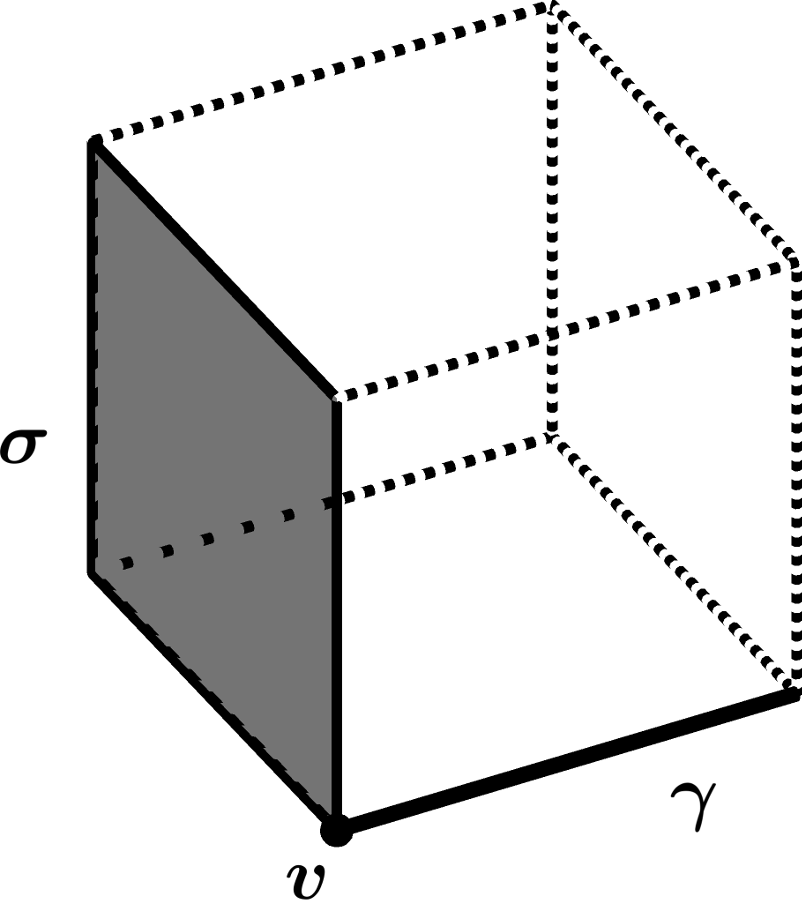}
\caption{A corner $(v,\gamma,\sigma)$ in a three dimensional polytope.} \label{corner}
\end{figure}
\end{center}

\begin{remark}
	\label{corner rmk}
Any pair of the elements in a corner  uniquely determines the third one. Therefore, we will sometimes refer to the corner $(v,\gamma,\sigma)$ shortly as $(v,\gamma)$ or $(v,\sigma)$. An edge $\gamma$ with endpoints $v,v'$ determines two corners $(v,\gamma,\sigma)$ and $(v,\gamma,\sigma')$, referred to as the
{\em end corners of} $\gamma$.  The facets $\sigma,\sigma'$ will be referred to as the {\em opposite facets of} $\gamma$. 
\end{remark}

\begin{example}
The $d$-dimensional simplex is the polytope defined by
$$ \Delta^d:=\left\{ (x_0,x_1,\ldots, x_d) : x_j\geq 0,\; \sum_{j=0}^d x_j=1\,\right\} . $$
\end{example}

The affine support of $\Delta^d$ is the hyperplane 
$$ E^d:=\left\{ (x_0,x_1,\ldots, x_d)\in\Rr^{d+1}  :  \sum_{j=0}^d x_j=1\,\right\} . $$
The defining family of $\Delta^d$ are the coordinate functions
$f_i:E^d\to\Rr$, $f_i(x_0,x_1,\ldots, x_d)=x_i$. The  simplex $\Delta^d$ has $d+1$ vertexes $v_0, v_1,\ldots, v_d$ and $d+1$ facets
$\sigma_0,\sigma_1,\ldots, \sigma_d$, where $v_j=(0,\ldots, 1,\ldots, 0)$ is the vertex opposed to the facet $\sigma_j=\Delta^d\cap\{x_j=0\}$ for each $j=0,1,\ldots, d$.

\section{Vector Fields on Polytopes}
\label{vectorfields}

In this section we introduce the general class of vector fields on polytopes to which our theory applies.

Let $\Gamma^d$ be a simple $d$-dimensional polytope.
A function $f:\Gamma^d\to\Rr$ is said to be {\em analytic}
if it can be analytically extended to a neighborhood of $\Gamma^d$.
We denote by $\A(\Gamma^d)$ the space of all analytic functions on $\Gamma^d$.
Similarly, we denote by $\fX(\Gamma^d)$
 the space of all analytic vector fields  $X:\Gamma^d\to \Rr^N$ 
 such that for every face $\rho\subset \Gamma^d$ and all $x\in\rho$,  the vector $X(x)$ is tangent to $\rho$. 
  This tangency requirement on the vector fields   $X\in \fX(\Gamma^d)$ implies that for every facet $\sigma \in F$, $\d f_{\sigma}(X)=0$ along $\sigma$.
By compactness the flow $\varphi_X^t$ of any vector field $X\in \fX(\Gamma^d)$ is complete on $\Gamma^{d}$ with singularities at the vertexes of the polytope.

Given a vertex $v$,  consider the coordinate system introduced in Remark~\ref{notation-coordinate},
$(x_1,\ldots, x_d)=(f_{\sigma_1}(q),\ldots, f_{\sigma_d}(q))$ 
where $F_v=\{\sigma_1,\ldots, \sigma_d\}$.
In these coordinates the analytic function $df_{\sigma_l}(X)$   vanishes along the hyperplane $x_l=0$. By Weierstrass division theorem either there exist a positive integer $\nu_l=\nu(X,\sigma_l),$ and the function $H_{\sigma_l}\in\A(\Gamma^d)$ which is non-identically zero along the face $\sigma_l$ and such that
\begin{equation}\label{def:order}
\d f_{\sigma_l}(X)=(f_\sigma)^{\nu_l}H_{\sigma_l},\quad \text{i.e.}\quad \dot{x_l}=x_l^{\nu_l} H_{\sigma_l} ,
\end{equation}
or else $\d f_{\sigma_l}(X)$  is identically zero. In the later case, we set  $\nu_l=\infty$. 
We say that \emph{$X$ has tangency contact of order} $\nu(X,\sigma)$ with $\sigma$ and will refer to it as the order of $X$ at the facet $\sigma$. The map $$\nu:\fX(\Gamma^d)\times F\to \{1,2,3,\ldots,\infty\}$$ is called \emph{order function} of $X$.

\begin{remark}
We have assumed analyticity for the sake of simplicity, also because the EGT models we have in mind are analytic (and even algebraic) vector fields. The results obtained in this work extend easily to smooth flows and vector fields. The main difference  is that for a smooth vector field $X$ the concept of order must first be defined locally.\footnote{
For a smooth vector field $X$, the order $\nu(X,v,\sigma)$  at a corner 
 $(v,\sigma)$ is the minimum integer $k\geq 1$
such that $ (\d f_\sigma)_v( D^{k} X)_v\neq 0$.
The order of  $\sigma$ is defined as
$$ \nu(X,\sigma) := \min\{ \nu(X,v,\sigma)\,\colon\,
\sigma\in F_v\,\}\;. $$
}
\end{remark}

For every corner $(v,\sigma,\gamma)$ there exists a unique vector $e_{(v,\sigma)}$ tangent to $\gamma$ at $v$
such that $(\d f_{\sigma})_v( e_{(v,\sigma)})=1$ and  for any other facet $\sigma'\in F_v$, $\sigma'\neq \sigma$,   $(df_{\sigma'})_v(e_{(v,\sigma)})=0$.
Hence,  
$\{\e_{(v,\sigma)}\}_{\sigma\in F_v}$ is the dual basis
 of the $1$-form  basis  $\{(d f_\sigma)_v\}_{\sigma\in F_v}$. 
The vectors $\e_{(v,\sigma)}$ are eigenvectors of the derivative $D X_v$.
If $\nu(X,\sigma)=1$ then $H_\sigma(v)$ is the eigenvalue of the derivative $D X_v$ associated to $e_{(v,\sigma)}$. In the case   
 $\nu=\nu(X,\sigma)\geq 2$, the eigenvalue associated to $e_{(v,\sigma)}$ is zero but we have 
 $$
 H_\sigma(v)=\frac{1}{\nu!}  (\d  f_\sigma)_v\, (D^{\nu}X)_v(\underbrace{e_{(v,\sigma)},\ldots,e_{(v,\sigma)}}_{\nu\; \text{ times} }) \;.
 $$
To see this consider the coordinate system
introduced in Remark~\ref{notation-coordinate},
$(x_1,\ldots, x_d)=(f_{\sigma_1}(q),\ldots, f_{\sigma_d}(q))$,
where $F_v=\{\sigma_1,\ldots, \sigma_d\}$. 
Then the $l^{\text{th}}$  component of the vector field $X$ is
$X_l(x)=x_l^{\nu}\, H_{\sigma_l}(x)$ and we have
$$ H_{\sigma_l}(v)= \frac{1}{\nu!}\,\frac{\partial^\nu X_l}{\partial x_l^\nu}(0) =\frac{1}{\nu!}  (\d  f_{\sigma_l})_v\, (D^{\nu}X)_v( e_{(v,\sigma_l)},\ldots,e_{(v,\sigma_l)} ) . $$ 
 
\begin{defn}\label{skeletoncharacter}
 The \emph{skeleton character} of  
 $X\in\fX(\Gamma^d)$ is defined to be the matrix
  $\chi:=(\chi^v_\sigma)_{(v,\sigma)\in V\times F}$ where 
 \begin{equation*}
 \chi^v_\sigma:=
 \left\{\begin{array}{ccc}-H_\sigma(v)&\quad&\sigma\in F_v\\0&\quad&\mbox{otherwise}\end{array}\right.\;.
 \end{equation*}
We set  $\chi^v_\sigma=0$ when $\nu(X,\sigma)=\infty$. For a fixed vertex $v$, the vector $\chi^v:= (\chi^v_\sigma)_{\sigma\in F}$ is referred to as the \emph{skeleton character} at $v$.
\end{defn}

\begin{remark}\label{sign-character}
For a given corner $(v,\gamma,\sigma)$ if $\chi^v_\sigma<0$   then $v$  is the $\alpha$-limit of an orbit in $\gamma$, and if $\chi^v_\sigma>0$  then $v$  is the $\omega$-limit  of an orbit in $\gamma$. Assuming that $X$ does not have singularities in the interior of an edge $\gamma$, if $\gamma$ connects the corners $(v,\sigma)$ and $(v',\sigma')$, then it consists of a single a heteroclinic orbit with $\alpha$-limit $v$ and $\omega$-limit $v'$  if and only if $\chi^{v}_{\sigma}<0$ and $\chi^{v'}_{\sigma'}>0$. 
\end{remark}
\bigskip

The  replicator equation provides a class of analytic vector 
fields in the space $\fX(\Delta^d)$. In the rest of this section we recall this equation and describe its skeleton character and order function.

Given a payoff matrix $A\in\Mat_{d+1}(\Rr)$ the system of differential equations
\begin{equation}
\label{rep}
\frac{d x_i}{dt} = x_i\,\left( (A x)_i - \sum_{k=0}^d x_k \, (A x)_k \right), \; 0\leq i \leq d
\end{equation}
is called the  {\em replicator equation}. The associated vector field
$X_A$ is called the {\em replicator vector field} of $A$ and lies in our class, $X_A\in \fX(\Delta^{d})$.
For a brief interpretation of this equation consider a population whose individuals interact with each other according to the set of pure strategies $\{0,\ldots, d\}$. A point  $x=(x_0,\ldots, x_d)\in \Delta^{d}$ represents a state of the population where $x_i$ measures the frequency of usage of strategy $i$.
Each entry $a_{ij}$ of $A$ represents the payoff of strategy $i$ against $j$ and this model governs the time evolution of the frequency distribution  of each pure strategy. 
The equation says that the growth rate of each frequency $x_i$ is the difference between its
payoff $(A x)_i=\sum_{j=0}^d a_{ij} x_j$  and the  average population's payoff  $\sum_{k=0}^d x_k \, (A x)_k$.

The next proposition characterizes the skeleton character of  $X_A$.

\begin{proposition} 
\label{repli sk char}
Given $A\in \Mat_{d+1}(\Rr)$,  every facet $\sigma_i$ of $\Delta^{d}$ has order $1$, $2$ or $\infty$.
More precisely
\begin{enumerate}
\item  $\nu(X_A,\sigma_i)=1$  iff 
$a_{i j}\neq a_{j j}$  for some $j$  or else
$(a_{k j} -a_{j j})_{k,j\neq i}$
is  not skew-symmetric. 
In this case $ \chi^{v_j}_{\sigma_i}= a_{j j} - a_{ i j}$ for all $j$.

\item  $\nu(X_A,\sigma_i)=2$  iff 
$a_{i j}=a_{j j}$  for all $j$  and
$(a_{k j} -a_{j j})_{k,j\neq i}$
is  skew-symmetric, but $(a_{k j} -a_{j j})_{k,j}$
is  not skew-symmetric. 
In this case $ \chi^{v_j}_{\sigma_i}= a_{j i}-a_{i i} $ for all $j$.

\item $\nu(X_A,\sigma_i)=\infty$  iff 
$a_{i j}=a_{j j}$  for all $j$  and
$(a_{k j} -a_{j j})_{k,j}$
is  skew-symmetric. 
In this case 
$ \chi^{v_j}_{\sigma_i}=0$ for all $j$.
\end{enumerate}
\end{proposition}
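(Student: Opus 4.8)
The plan is to compute $\d f_{\sigma_i}(X_A)$ explicitly, identify the leading term near the facet $\sigma_i = \{x_i = 0\}$, and read off the order and the skeleton character values at each vertex $v_j = e_j$.

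\textbf{Step 1: Compute $\d f_{\sigma_i}(X_A)$ and factor out $x_i$.} In the canonical representation, $f_{\sigma_i}(x) = x_i$, so $\d f_{\sigma_i}(X_A) = (X_A)_i = x_i\bigl((Ax)_i - \bar a(x)\bigr)$, where $\bar a(x) := \sum_k x_k (Ax)_k$ is the mean payoff. Thus $H_{\sigma_i}(x) = (Ax)_i - \bar a(x)$ is, a priori, an analytic function on $\Delta^{n-1}$; the order $\nu(X_A,\sigma_i)$ is $1$ unless $H_{\sigma_i}$ vanishes identically on $\sigma_i$, in which case $H_{\sigma_i}$ itself is divisible by $x_i$ and we must iterate. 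The key computational identity is that, restricted to $\sigma_i = \{x_i=0\}$, we may regard $x = (x_k)_{k\neq i}$ as a point of $\Delta^{n-2}$ and expand
\[
H_{\sigma_i}(x)\big|_{\sigma_i} = \sum_{j\neq i} a_{ij} x_j - \sum_{k\neq i}\sum_{j\neq i} x_k a_{kj} x_j .
\]
I would then use the standard trick $\sum_j x_j = 1$ on $\sigma_i$ to write $\sum_{j\neq i} a_{ij}x_j = \sum_{j\neq i}(a_{ij}-a_{jj})x_j + \sum_{j\neq i} a_{jj}x_j$ and, similarly, rewrite the quadratic term so that the common piece $\sum_{j\neq i} a_{jj} x_j$ cancels, leaving
\[
H_{\sigma_i}(x)\big|_{\sigma_i} = \sum_{j\neq i}(a_{ij}-a_{jj})x_j - \sum_{k,j\neq i} (a_{kj}-a_{jj})\, x_k x_j .
\]
The quadratic form $\sum_{k,j\neq i} b_{kj} x_k x_j$ with $b_{kj}:=a_{kj}-a_{jj}$ vanishes identically (as a form on the hyperplane, equivalently on all of $\Rr^{n-1}$ since it is homogeneous) if and only if its symmetrization vanishes, i.e. $b_{kj}+b_{jk}=0$ for all $k,j\neq i$ — that is, $(a_{kj}-a_{jj})_{k,j\neq i}$ is skew-symmetric. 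And the linear part vanishes iff $a_{ij}=a_{jj}$ for all $j\neq i$ (trivially also $j=i$). This gives exactly the dichotomy in item (1): $\nu(X_A,\sigma_i)=1$ precisely when $H_{\sigma_i}|_{\sigma_i}\not\equiv 0$, i.e. when the stated condition fails to hold simultaneously.

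\textbf{Step 2: The order-one skeleton character.} When $\nu=1$, Definition~\ref{skeletoncharacter} gives $\chi^{v_j}_{\sigma_i} = -H_{\sigma_i}(v_j)$ for $\sigma_i\in F_{v_j}$, i.e. for $j\neq i$ (since $v_j=e_j$ lies on $\sigma_i$ iff its $i$-th coordinate is $0$ iff $j\neq i$). Evaluating $H_{\sigma_i}$ at $x=e_j$: $(Ae_j)_i = a_{ij}$ and $\bar a(e_j) = (Ae_j)_j = a_{jj}$, so $H_{\sigma_i}(v_j) = a_{ij}-a_{jj}$ and hence $\chi^{v_j}_{\sigma_i} = a_{jj}-a_{ij}$, as claimed.

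\textbf{Step 3: Order two, and the second-order skeleton character.} Now suppose $H_{\sigma_i}\equiv 0$ on $\sigma_i$; by analyticity $H_{\sigma_i} = x_i\, \tilde H_{\sigma_i}$ for some analytic $\tilde H_{\sigma_i}$, so $\d f_{\sigma_i}(X_A) = x_i^2\,\tilde H_{\sigma_i}$ and $\nu\geq 2$, with $\nu=2$ iff $\tilde H_{\sigma_i}|_{\sigma_i}\not\equiv 0$. To compute $\tilde H_{\sigma_i}|_{\sigma_i}$ I would differentiate: $\tilde H_{\sigma_i}(x) = \partial_{x_i} H_{\sigma_i}(x)$ evaluated appropriately, or more cleanly use the formula in Definition~\ref{skeletoncharacter} for the higher-order case, $\chi^{v_j}_{\sigma_i} = -H_{\sigma_i}(v_j)$ replaced by the $\nu$-jet expression; here $H_\sigma(v) = \tfrac1{\nu!}(\d f_\sigma)_v (D^\nu X)_v(e_{(v,\sigma)},\dots)$ with $\nu=2$. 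Concretely, at $v_j$ the relevant eigenvector $e_{(v_j,\sigma_i)}$ is the edge direction $e_i - e_j$ (the unique tangent vector along the edge from $v_j$ toward $v_i$ with $\d f_{\sigma_i}$-value $1$). I would compute the Hessian of the $i$-th component $(X_A)_i = x_i((Ax)_i-\bar a(x))$ at $v_j$ applied twice to $e_i-e_j$. Since $(X_A)_i$ has $x_i$ as a factor and $(Ax)_i - \bar a(x)$ vanishes to first order along $\sigma_i$ under our hypothesis, a short calculation gives $\tfrac12 (D^2 X_A)^{(i)}_{v_j}(e_i-e_j, e_i-e_j) = (a_{ji}-a_{ii})$ — the surviving term comes from the coefficient of $x_i$ in the expansion of $(Ax)_i-\bar a(x)$ around $v_j$ in the $e_i-e_j$ direction. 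Hence $\chi^{v_j}_{\sigma_i} = -(a_{ji}-a_{ii}) = a_{ii}-a_{ji}$... wait — I must match signs with the statement, which says $\chi^{v_j}_{\sigma_i} = a_{ji}-a_{ii}$; I would recheck the sign of $H_\sigma$ versus $\chi$ (recall $\chi^v_\sigma = -H_\sigma(v)$) together with the sign of the leading coefficient, and reconcile — the bookkeeping of these two signs is exactly the delicate point. The skew-symmetry of $(a_{kj}-a_{jj})_{k,j\neq i}$ is used to kill cross terms in the Hessian so that only the pure $e_i$-direction second derivative survives.

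\textbf{Step 4: Order infinity.} Finally, $\nu=\infty$ means $\d f_{\sigma_i}(X_A)\equiv 0$, i.e. $x_i=0$ is invariant to infinite order; equivalently $(Ax)_i - \bar a(x)$ vanishes identically on $\Delta^{n-1}$. Expanding as in Step 1 without restricting to $\sigma_i$, one finds $(Ax)_i-\bar a(x) = \sum_j (a_{ij}-a_{jj})x_j - \sum_{k,j}(a_{kj}-a_{jj})x_kx_j$ on $\Delta^{n-1}$; this is identically zero iff $a_{ij}=a_{jj}$ for all $j$ and $(a_{kj}-a_{jj})_{k,j}$ (the full $n\times n$ matrix) is skew-symmetric, which is item (3). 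In that case $H_{\sigma_i}\equiv 0$, so $\chi^{v_j}_{\sigma_i}=0$ for all $j$, by the convention in Definition~\ref{skeletoncharacter}. The three cases are visibly mutually exclusive and exhaustive, completing the proof.

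\textbf{Main obstacle.} The genuinely delicate part is Step 3: correctly evaluating the second-order jet $(D^2 X_A)_{v_j}$ on the edge direction, keeping track of which terms survive under the skew-symmetry hypothesis, and getting the final sign right by threading together the $\chi = -H$ convention, the $1/\nu!$ normalization, and the sign of the quadratic leading coefficient. The order-one and order-infinity cases are routine once the algebraic rewriting in Step 1 is in place; everything else is bookkeeping with the simplex constraint $\sum_j x_j = 1$.
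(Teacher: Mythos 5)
Your Steps 1, 2 and 4 follow essentially the same computation as the paper and are fine in substance, but Step 3 --- the case $\nu=2$, which you yourself single out as the delicate point --- is not actually carried out, and as written it terminates in an unresolved contradiction with the statement. The sign you would need to ``reconcile'' is in fact an error in your claimed Hessian value: a direct expansion along the edge gives $(X_A)_i\bigl(v_j+t(e_i-e_j)\bigr)=(a_{ii}-a_{ji})\,t^2+O(t^3)$ under the item (2) hypotheses, so $\tfrac12(\d f_{\sigma_i})_{v_j}(D^2X_A)_{v_j}(e_i-e_j,e_i-e_j)=a_{ii}-a_{ji}$, not $a_{ji}-a_{ii}$; then $\chi^{v_j}_{\sigma_i}=-H_{\sigma_i}(v_j)=a_{ji}-a_{ii}$, in agreement with the proposition. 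Since your proposal stops at ``I would recheck the sign'', the key conclusion of item (2) is not established. In addition, you never verify that the order is exactly $2$ rather than $\geq 3$ or $\infty$: with $\tilde a_{kj}:=a_{kj}-a_{jj}$, you must check that the degree-two coefficient does not vanish identically on $\sigma_i$, which requires the observation that non-skewness of the full matrix $(\tilde a_{kj})_{k,j}$, combined with $\tilde a_{ij}=0$ for all $j$ and skewness of the block $(\tilde a_{kj})_{k,j\neq i}$, forces $a_{ji}\neq a_{ii}$ for some $j\neq i$. The paper avoids the jet computation altogether: using $\sum_k x_k=1$ one gets $H_i(x)=(\tilde A x)_i-x^{T}\tilde A x$, and under the item (2) hypotheses $(\tilde A x)_i=0$ and $x^{T}\tilde A x=x_i\sum_j(a_{ji}-a_{ii})x_j$, so $\d f_{\sigma_i}(X_A)=x_i^2\bigl(-\sum_j(a_{ji}-a_{ii})x_j\bigr)$; this single factorization yields both the exact order and the character with no sign bookkeeping, and it is the fix I would recommend for your Step 3.

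A secondary, repairable imprecision sits in Step 1: you treat the vanishing of $H_{\sigma_i}|_{\sigma_i}$ as equivalent to the separate vanishing of the linear part and of the quadratic part of your displayed expression, but on the affine slice $\sum_{j\neq i}x_j=1$ degrees mix (a nonzero linear form can equal a nonzero quadratic form there), so the decoupling needs an argument. The quickest one: evaluate $H_{\sigma_i}$ at the vertices $v_j$, $j\neq i$, which gives $a_{ij}-a_{jj}$ and forces the linear part to vanish first; only then does vanishing of the homogeneous quadratic form, hence skew-symmetry of $(a_{kj}-a_{jj})_{k,j\neq i}$, follow. (The paper handles the corresponding subcase differently, by noting that the zero set of a non-skew quadratic form on the facet has zero Lebesgue measure, so $H_i\not\equiv 0$ on $\sigma_i$ and $\nu=1$ there; your symmetrization route is fine once the decoupling is justified.)
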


\begin{proof}
Consider the conditions
\begin{enumerate}
\item[(C1)]   
$a_{i j}\neq a_{j j}$  for some $j$  or else
$(a_{k j} -a_{j j})_{k,j\neq i}$
is  not skew-symmetric.

\item[(C2)]  $a_{i j}=a_{j j}$  for all $j$  and
$(a_{k j} -a_{j j})_{k,j\neq i}$
is  skew-symmetric, but $(a_{k j} -a_{j j})_{k,j}$
is not skew-symmetric.

\item[(C3)]  $a_{i j}=a_{j j}$  for all $j$  and
$(a_{k j} -a_{j j})_{k,j}$
is  skew-symmetric. 
\end{enumerate}

It is clear that (C1), (C2) and (C3) are exhaustive and mutually exclusive conditions. Hence it is enough to prove that (C1) $\Rightarrow$ $\nu(X_A,\sigma_i)=1$, (C2) $\Rightarrow$ $\nu(X_A,\sigma_i)=2$ and (C3) $\Rightarrow$ $\nu(X_A,\sigma_i)=\infty$.

Let
$H_i:\Delta^{d}\to\Rr$ be the function
$$ H_i(x):=  (A x)_i - \sum_{k=0}^d x_k \, (A x)_k .$$
A simple computation shows that
$$ H_i(v_j)= a_{i j} - a_{j j} $$
where the $v_j$ are the vertexes of $\Delta^{d}$.
Thus, if for some $j$, $a_{i j}\neq a_{j j}$  then  $\nu(X_A,\sigma_i)=1$ and  
$ \chi^{v_j}_{\sigma_i}= a_{j j} - a_{ i j}$ for all $j$.
Assume now that (C1) holds and let
$\tilde A=(a_{kj}-a_{jj})_{k,j}$. Then
$H_i(x) =  (\tilde A x)_i - \sum_{k=0}^d x_k \, (\tilde A x)_k$.
If   $a_{i j}= a_{j j}$  for all  $j$ then 
$(\tilde A x)_i=0$ for all $x\in\Delta^{d}$.
Also, if   $a_{i j}= a_{j j}$  for all  $j$ then 
 the matrix $(\tilde a_{k j})_{k,j\neq i}$
is  not skew-symmetric. 
This implies that the closed cone
$C_i$ defined by the conditions $x_i=0$ and $x^T \tilde  A x=0$
 has zero  Lebesgue measure in the hyperplane
 $\{x_i=0\}\subset \Rr^{d+1}$.
Therefore $C_i\cap \sigma_i$ has zero  Lebesgue measure in the facet
$\sigma_i$, which implies that $H_i(x)=-x^T \tilde A x$ is not identically zero on $\sigma_i$. 
Hence  $\nu(X_A,\sigma_i)=1$ and  
$ \chi^{v_j}_{\sigma_i}= -H_i(v_j)=  a_{j j} - a_{ i j}=0$ for all $j$.

Assuming (C2) holds we have $(\tilde A x)_i=0$
and 
$$H_i(x)= -x^T \tilde A x = -x_i\sum_{j=0}^d (a_{j i}-a_{i i})\,x_j.$$
Because $(a_{k j} -a_{j j})_{k,j}$
is not skew-symmetric  we have $a_{j i}\neq a_{i i}$ for some $j$.
Thus $\nu(X_A,\sigma_i)=2$ and
$ \chi^{v_j}_{\sigma_i}=   a_{j i} - a_{ i i}$ in this case.

Finally, if (C3) holds then $H_i\equiv 0$, which implies 
$\nu(X_A,\sigma_i)=\infty$.
\end{proof}

\section{Rescaling Coordinates}
\label{rescaling}
In this section we define the dual cone of a polytope
and introduce the family of rescaling  coordinates $\Psi^X_\epsilon$
described  in the introduction.

Consider a polytope $\Gamma^d$ and its defining family $\{f_\sigma\}_{\sigma\in F}$, see Definition~\ref{defn:polytope}. 
By Remark~\ref{notation-coordinate}, the co-vectors $\{(\d f_\sigma)_v: \sigma\in F_v\}$ are linearly independent  at every vertex $v$. Multiplying each affine function of this family by some large positive number  we may assume that the neighborhoods 
\[N_v:=\{q\in \Gamma^d :\,\, f_\sigma(q)\leq 1, \; \forall \sigma\in F_v\}, \]
with  $v\in V$, are pairwise disjoint, and  that the functions $\{f_\sigma: \sigma\in F_v\}$ define a coordinate system for $\Gamma^d$ on $N_v$. For any edge $\gamma$ connecting two vertexes $v,v'\in V$ we can define a tubular neighborhood  connecting $N_{v}$ to $N_{v'}$ by 
\[N_{\gamma}:=\{q\in\Gamma^d\backslash (N_{v}\cup N_{v'}):f_\sigma(q)\leq 1\,\text{ for all }\, \gamma\subset \sigma\}. \]
As before, we may assume that these neighborhoods are pairwise disjoint between themselves. Furthermore,
fixing a smooth submersion $t:\overline{N_\gamma}\to [0,1]$ such that
$t^{-1}(0)\subset \partial N_{v}$ and
$t^{-1}(1)\subset \partial N_{v'}$, whose restriction induces
a diffeomorphism between 
$\gamma\setminus \inter(N_v\cup N_{v'})$ and $[0,1]$, the family of functions $\{t, \{f_\sigma\}_{\gamma\subset\sigma}\}$ defines a coordinate system for the polytope on $N_{\gamma}$.
 The edge skeleton's  tubular  neighborhood
\begin{equation}\label{N-Gamma}
N_{\Gamma^d}:=(\cup_{v\in V}N_v)\cup(\cup_{\gamma\in E}N_\gamma)
\end{equation} 
 will be the domain of our rescaling maps $\Psi^X_\epsilon$, see Figure~\ref{tubular}.  

\begin{remark}\label{notation-coordinate1}
We can turn the previous local coordinate systems over
the neighborhoods $N_v$ and $N_\gamma$ into a global system of  coordinates over $N_{\Gamma^d}$ with values in $\Rr^{F}$ as follows:

Each point $q\in N_v$ has coordinates $x=(x_\sigma)_\sigma \in \Rr^{F}$, where $x_\sigma=f_\sigma(q)$ if $v\in \sigma$ and 
$x_\sigma=0$ otherwise. 

Similarly, a point $q\in N_\gamma$
has coordinates $x=(x_\sigma)_\sigma\in \Rr^{F}$, where $x_\sigma=f_\sigma(q)$ if $\gamma\subset \sigma$ and 
$x_\sigma=0$ otherwise. Note that we have dropped the coordinate
$t=t(q)$ and hence this `coordinate system' fails to be injective.
The missing coordinate will not really matter because, as explained in the introduction,  
 global Poincar\'e maps become identity maps asymptotically.
\end{remark}

We  use the following family of functions to define the rescaling coordinates. For every $n=1,2,\ldots$,   let   $h_n:(0,1]\to\Rr$  be the function
\begin{equation}\label{defn:hn}
h_1(x)=-\log \,x \quad\mbox{and}\quad h_n(x)=-\frac{1}{n-1}\left(1-\frac{1}{x^{n-1}}\right)\quad n\geq2.
\end{equation}

\begin{remark} This family is characterized by the  properties:\\
 $h_n'(x)= - x^{-n}$,  $h_n(0)=+\infty$ and   $h_n(1)=0$,
which imply that the function  $h_n:(0,1]\to [0,+\infty)$ is a diffeomorphism. A straightforward computation  yields
\begin{equation*}
(h_1)^{-1}(y)=e^{-y}\; \mbox{and}\; (h_n)^{-1}(y)=(1+(n-1)y)^{-\frac{1}{n-1}}\; \text{ if }\;  n\geq2. 
\end{equation*}
\end{remark}

\begin{defn}\label{defn:vcoor.ch.}
Given  $X\in\fX(\Gamma^d)$  we define  the 
$\epsilon$-rescaling  coordinate system $\Psi_\epsilon^X:N_{\Gamma^d}\setminus \partial\Gamma^d\to\Rr^F$ which maps $q\in N_{\Gamma^d}$ to $y:=(y_{\sigma})_{\sigma\in F}$ where
\begin{itemize}
\item[$\bullet$] if $q\in N_v$ for some vertex $v$:
$$y_{\sigma}=\left\{ \begin{array}{cll}
\epsilon^2 h_{\nu(X,\sigma) }(f_\sigma(q)) & \text{ if } &\sigma\in F_v \\
0 &   \text{ if } &\sigma\notin F_v
\end{array}
\right. $$
\item[$\bullet$] if $q\in N_\gamma$ for some edge $\gamma$:
$$y_{\sigma}=\left\{ \begin{array}{cll}
\epsilon^2 h_{\nu(X,\sigma) }(f_\sigma(q)) & \text{ if } &\gamma\subset \sigma \\
0 &   \text{ if } &\gamma\not\subset \sigma
\end{array}
\right.$$
\end{itemize}
\end{defn}

 For a given vertex $v\in V$ we define
\begin{equation}\label{pi-v}
\Pi_v:=\{\, (y_\sigma)_{\sigma\in F}\in \Rr_+^{F}\,\colon\, y_\sigma=0 \quad \forall \sigma\notin F_v\,\}\;.
\end{equation}
Since $\{f_\sigma : \sigma \in F_v\}$ is a coordinate system for $\Gamma^d$ in $N_v$ and the functions $h_n:(0,1]\to [0,+\infty)$ are diffeomorphisms, the restriction of $\Psi_\epsilon^X$ to $N_v\setminus \partial\Gamma^d$ is a diffeomorphism onto $\Pi_v$.

Next consider an edge $\gamma$ connecting two corners $(v,\sigma)$ and $(v', \sigma')$. Note that  $F_{v}\cap F_{v'}=\{\sigma \in F : \gamma\subset\sigma\}$, which means that  the image
\[\Psi_\epsilon^X(N_\gamma\setminus \partial\Gamma^d)=\{\, (y_\sigma)_{\sigma\in F}\in \Rr_+^{F}\,\colon\, y_\sigma=0 \quad \mbox{when}\, \gamma\not\subset \sigma\,\}\;.\]
is equal to $\Pi_{v}\cap \Pi_{v'}$. 
We  denote this image by $\Pi_\gamma$. Notice that $N_\gamma$ has dimension $d$, while $\Pi_\gamma$ has dimension $d-1$.
In particular the map $\Psi^X_\epsilon$ is not injective over $N_\gamma$.

Let us  explain the use of the term `coordinate system' here.
As mentioned above, the family  of functions $\{t, \{f_\sigma\}_{\gamma\subset\sigma}\}$ defines a coordinate system for $\Gamma^d$ on $\overline{N_\gamma}$. For any $t_0\in (0,1)$,
let $\Sigma_{t_0}:=\{q\in N_\gamma: t(q)=t_0\}$. This set is a transversal cross-section to $\gamma$ at the point $q=\gamma \cap t^{-1}(t_0)$.
Between the boundary transversal cross-sections $\Sigma_0, \Sigma_1$, we have 
\[\Psi_\epsilon^X(\Sigma_t)=\Psi_\epsilon^X(N_\gamma)\quad \forall t\in [0,1].\]
\begin{center}
\begin{figure}[h]
\includegraphics[width=9cm]{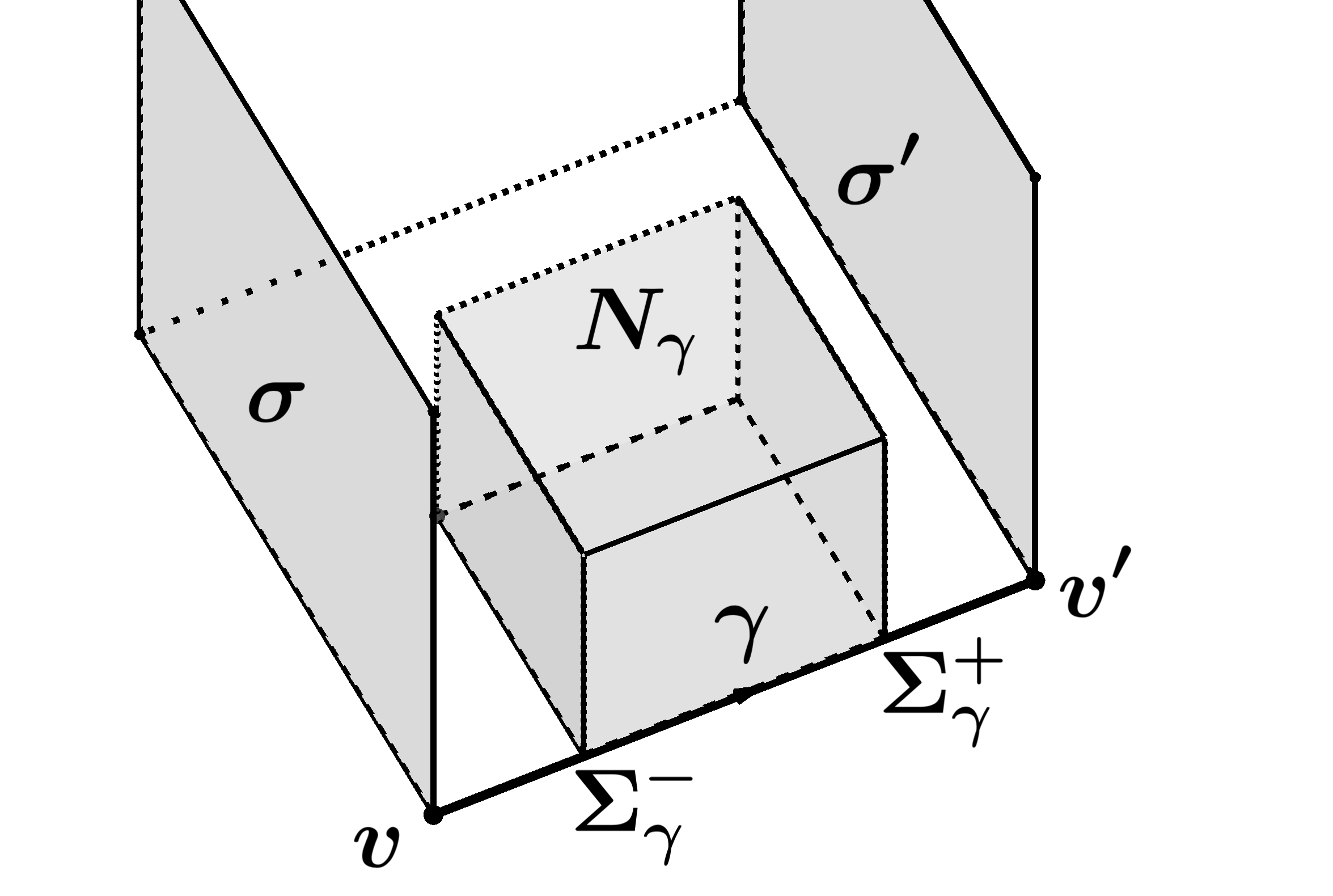}
\caption{An edge connecting two corners} \label{edge-corners}
\end{figure}
\end{center}
As  mentioned in the introduction,   asymptotically the global Poincar\'e maps are  identity maps, see Lemma~\eqref{pgamma}. Thus
the asymptotic flow identifies all cross-sections $\Sigma_t$, $t\in [0,1]$. This makes the map $\Psi_\epsilon^X$ a suitable `coordinate system'  for our purposes.  

\begin{defn}\label{defn:dualcones}
The {\em dual cone} of   $\Gamma^d$ is defined to be
$$\CC^\ast(\Gamma^d) :=\bigcup_{v\in V}\Pi_v\;, $$
where $\Pi_v$ is the sector defined at~\eqref{pi-v}. Points of the dual cone  will always be denoted by  $y=(y_\sigma)_{\sigma\in F}$.
\end{defn}

By construction, the  dual cone  is the range of the $\epsilon$-rescaling coordinate system, \ie $\Psi^X_\epsilon( N_{\Gamma^d}\setminus \partial\Gamma^d) = \CC^\ast(\Gamma^d)$. In particular these coordinates determine  a family of maps
$\Psi^X_\epsilon:N_{\Gamma^d}\setminus \partial\Gamma^d\to \CC^\ast(\Gamma^d)$. We will write  $\Psi^X_{v,\epsilon}$ instead of
$\Psi^X_{\epsilon}$  to emphasize that we are
dealing with the restriction of the $\epsilon$-rescaling coordinates
to the neighborhood $N_v$, which is a diffeomorphism
$\Psi^X_{v,\epsilon}:N_{v}\setminus \partial\Gamma^d\to \Pi_v$.

To explain the term `dual' notice first that $\Pi_\gamma=\Pi_{v}\cap\Pi_{v'}$, whenever $\gamma$ is an edge 
connecting the vertexes $v$ and $v'$. Similar relations hold  for higher dimensional faces. In fact for any face $\rho\subset \Gamma^d$, 
we can define 
$$\Pi_\rho:=\{\, (y_\sigma)_{\sigma\in F}\in \Rr_+^{F}\,\colon\, y_\sigma=0 \quad \mbox{when}\, \rho\not\subset \sigma\,\} .$$
The dual cone $\CC^\ast(\Gamma^d)$ has a simplicial structure
where $\Pi_\rho$ is a face of $\CC^\ast(\Gamma^d)$  for every face
$\rho$ of $\Gamma^d$. Moreover, for any faces $\rho, \rho'$ of $\Gamma^d$,
$$ \rho\subset \rho' \quad  \Leftrightarrow \quad  \Pi_{\rho'}\subset \Pi_\rho .$$
The dual cone of a polytope can be identified with the polytope's normal fan, which in turn coincides with the face fan of its dual polytope,
see~\cite[Chapter 7]{Ziegle1995}. This gives a short explanation for the inherent duality between a polytope and its dual cone.


The following technical lemma will be used to control the asymptotic behavior of $\Psi_\epsilon^X$. 
 
%
\begin{lemma}\label{hninv}
For any  $n\geq 1$  and   $k\geq 1$, there exists $0<r(k,n)\leq 1$ such that the diffeomorphisms $h_n:(0,1]\to [0,+\infty)$ satisfy
\begin{enumerate}
	\item $\displaystyle \; \lim_{\epsilon\to 0^+} \max_{0\leq i\leq k} 
	\sup_{y\geq\epsilon^r} 
	\abs{ \frac{d^i}{d y^i} h^{-1}_n\left(\frac{y}{\epsilon^2}\right)  } =0$,
	\item $\displaystyle \;  \lim_{\epsilon\rightarrow 0^+}  \max_{0\leq i\leq k}
	\sup_{y\geq \epsilon^r} \abs{ \frac{d^i}{dy^i} 
		\left[\, \epsilon^2 \left( h_l\circ (h_n)^{-1}\right)\left(
		\frac{y}{\epsilon^2} 
		\right) 
		\right] } 
	= 0  $ \,  for 	$1\leq l<n$.
\end{enumerate}
Moreover $r(k,1)=1$ for all $k\geq 1$.
\end{lemma}

\begin{proof}
For $n=1$ take $r=1$ regardless of $k$. The $k^{\text{th}}$ derivative of
$e^{-y/\epsilon^2}$   is bounded, over $y\geq \epsilon$,  by $\epsilon^{-2k}\,e^{-1/\epsilon}$, which tends to $0$ as
$\epsilon\to 0^+$. In this case the conclusion (2) is empty.

For $n>1$ and $y\geq \epsilon^r$  the  $k^{\text{th}}$ derivative of
$h_n^{-1}(y/\epsilon^2)$ is bounded by 
\begin{align*}
\frac{(n-1)^k}{\epsilon^{2k}}\, \prod_{j=0}^{k-1}\left(-\frac{1}{n-1}-j\right)\,\left(1+ \frac{\epsilon^r}{\epsilon^2} \right)^{-\frac{1}{n-1}-k }   & \asymp \epsilon^{(2-r)(\frac{1}{n-1}+k)-2 k } \\
 & = \epsilon^{\frac{2}{n-1}-r (\frac{1}{n-1}+k)  } 
\end{align*}
which tends to $0$ as
$\epsilon\to 0^+$, provided we choose
$$ 0< r <\frac{\frac{2}{n-1}}{\frac{1}{n-1} + k} 
= \frac{2}{1+(n-1)k} \leq 1 . $$
 The last inequality holds  for any  $n\geq2$.
 This proves item (1).
 
 \newcommand{\bigO}{\mathcal{O}}
 Consider now the family of functions $g_l(\epsilon,y):= \epsilon^2 ( h_l\circ h_n^{-1})(y/\epsilon^2)$ with $1\leq l<n$. For $l=1$ we have
 $$ g_1(\epsilon,y)= \frac{\epsilon^2}{n-1}\log \left( 1+(n-1)\frac{y}{\epsilon^2}\right)$$
 and over the interval $y\geq \epsilon^r$, $g_1(\epsilon,y)=\bigO(\epsilon)$ as $\epsilon\to 0$.
 The higher order derivatives of $g_1(\epsilon,y)$ are 
  $$ \frac{d^k g_1}{dy^k} (\epsilon,y)= \pm (k-1)!\left( \frac{n-1}{\epsilon} \right)^{k-1} \left( 1+(n-1)\frac{y}{\epsilon^2}\right)^{-k} .$$
  Hence  over the interval $y\geq \epsilon^r$
   $$\frac{d^k g_1}{dy^k} (\epsilon,y) =\bigO(\epsilon^{2-r k})\;\; \text{ as } \;  \epsilon\to 0 $$
   and this tends to $0$ provided $r<\frac{2}{k}$.
   
   For $2\leq l<n$ set $\theta_l=\frac{l-1}{n-1}$ and notice that
   $\theta_l<\frac{n-2}{n-1}<1$.
   A simple calculation gives
   $$ g_l(\epsilon,y)= -\frac{\epsilon^2}{l-1} +
   \frac{\epsilon^2}{l-1}  \left( 1+(n-1)\frac{y}{\epsilon^2}\right)^{\theta_l}  $$
   and over the interval $y\geq \epsilon^r$ one has $g_l(\epsilon,y)=\bigO(\epsilon^{\frac{2}{n-1} })$ as $\epsilon\to 0$. 
   For $k\geq 1$, the higher order derivatives of $g_l(\epsilon,y)$ are 
   $$ \frac{d^k g_l}{dy^k} (\epsilon,y)= \pm \frac{n-1}{l-1} \left[\prod_{j=0}^{k-1} (\theta_l-j) \right] \left( \frac{n-1}{\epsilon^2} \right)^{k-1} \left( 1+(n-1)\frac{y}{\epsilon^2}\right)^{\theta_l-k} .$$
   Hence  over the interval $y\geq \epsilon^r$
   $$\frac{d^k g_l}{dy^k} (\epsilon,y) =\bigO(\epsilon^{-r(k-\frac{n-2}{n-1})+\frac{2}{n-1}})\;\; \text{ as } \;  \epsilon\to 0 $$   
   which tends to $0$ provided $r<\frac{2}{k(n-1)-(n-2)}$.
   This proves item (2).
\end{proof}

To shorten statements about convergence in the forthcoming lemmas and theorems  we introduce some terminology.

\begin{defn}\label{remark:convergence}
Suppose we are given a family of functions $F_\epsilon$ with varying domains $\UU_\epsilon$.
Let $F$ be another function with domain $\UU$. Assume that  all these functions have the same  
target and source spaces, which are assumed to be linear spaces. 
We will say that\, 
$ \lim_{\epsilon \rightarrow 0^+} F_\epsilon = F$\,
{\em in the  $C^k$ topology},
to mean that:
\begin{enumerate}
\item {\em domain convergence}: for  every compact subset $\, K\subseteq\UU$, 
we have $K \subseteq\UU_\epsilon$ for every small enough $\epsilon>0$, and 
\item {\em uniform convergence on compact sets}:
$$ \lim_{\epsilon \rightarrow 0^+}\;
\max_{0\leq i\leq k} \sup_{ u\in K }
\left|\, D^i \left[ 
F_\epsilon(u) - F(u)
\right]\,
\right|\; =\; 0\;.$$
\end{enumerate}
Convergence in the $C^\infty$ topology means
convergence in the $C^k$ topology for all $k\geq 1$.
If in a statement $F_\epsilon$ is a composition of two or more mappings then
its domain should be understood as the composition domain.
\end{defn}

Next lemma relates the asymptotic push-forward of $X$ by $\Psi_\epsilon^X$ near a vertex $v$ with the skeleton character $\chi^v$ of $X$ at $v$, see Definition~\ref{skeletoncharacter}. It says that the vector field
$(\Psi^X_\epsilon)_\ast X$ rescaled by the factor $\epsilon^{-2}$
converges to the constant vector field $\chi^v$ on the sector $\Pi_v$.
In particular the trajectories of the  push-forward vector field
$(\Psi^X_\epsilon)_\ast X$ are asymptotically linearized to the  lines of the flow of the constant vector field $\chi^v$.
We will denote by $\Psi_{v,\epsilon}^X$ the restriction of $\Psi_\epsilon^X$ to $N_v$. Define also
\begin{equation}\label{pivepsilon}
\Pi_v(\epsilon):=\{\,y\in\Pi_v\,\colon\, y_\sigma \geq\epsilon \quad\text{for all}\quad \sigma\in F_v\, \}
\end{equation}

\begin{lemma}\label{lemma:rescal}
Consider the functions $H_\sigma$  defined in ~\eqref{def:order}.
Then 
\[(\Psi_{v,\epsilon}^X)_\ast X=\epsilon^2 
\,\left(\tilde{X}_{v,\sigma}^\epsilon\right)_{\sigma\in F} ,\]
where $$\tilde{X}_{v,\sigma}^\epsilon(y):=
\left\{ \begin{array}{cll}
-H_\sigma\left((\Psi_{v,\epsilon}^X)^{-1}(y) \right) &\text{if} & \sigma\in F_v\\
0  &\text{if} & \sigma\notin F_v\\
\end{array}  
\right. .$$
Moreover, given $k\geq 1$ there exists $r=r(k,X)>0$ such that
 the following limit holds in the $C^k$ topology
\[\lim_{\epsilon\to 0}\, ( \tilde{X}_v^\epsilon)_{|_{\Pi_v(\epsilon^r)}}= \chi^v\;.\]
\end{lemma}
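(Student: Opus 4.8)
The plan is to unwind the definitions and reduce the statement to the asymptotic estimate already isolated in Lemma~\ref{hninv}. First I would verify the identity $(\Psi_{v,\varepsilon}^X)_\ast X=\varepsilon^2\,\tilde X_v^\varepsilon$ by a direct chain-rule computation: writing $y_\sigma=\varepsilon^2 h_{\nu(X,\sigma)}(\psi_v^\sigma(q))$ for $\sigma\in F_v$ and using $h_n'(x)=-x^{-n}$ together with the defining relation $\d f_\sigma(X)=(f_\sigma)^{\nu(X,\sigma)}H_\sigma$ from~\eqref{def:order} and $\psi_v^\sigma=f_\sigma$, one gets
\[
\frac{d}{dt}\,y_\sigma = \varepsilon^2\,h_{\nu(X,\sigma)}'(\psi_v^\sigma(q))\,\d f_\sigma(X)(q)
= -\varepsilon^2\,(\psi_v^\sigma(q))^{-\nu(X,\sigma)}\,(f_\sigma(q))^{\nu(X,\sigma)}H_\sigma(q)
= -\varepsilon^2\,H_\sigma(q).
\]
Expressing $q=(\Psi_{v,\varepsilon}^X)^{-1}(y)$ gives exactly the claimed form, with the components for $\sigma\notin F_v$ vanishing identically because $\Psi_{v,\varepsilon}^X$ maps into $\Pi_v$.

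Next I would address the $C^k$ convergence $(\tilde X_v^\varepsilon)_{|\Pi_v(\varepsilon^r)}\to\chi^v$. By Definition~\ref{skeletoncharacter}, the $\sigma$-component of $\chi^v$ is $-H_\sigma(v)$, so I must show $H_\sigma\bigl((\Psi_{v,\varepsilon}^X)^{-1}(y)\bigr)\to H_\sigma(v)$ in $C^k$, uniformly over $y\in\Pi_v(\varepsilon^r)$. The key observation is that in the $v$-coordinate $\psi_v$, the inverse map reads $\psi_v^\sigma\bigl((\Psi_{v,\varepsilon}^X)^{-1}(y)\bigr)=h_{\nu(X,\sigma)}^{-1}(y_\sigma/\varepsilon^2)$ for $\sigma\in F_v$, which is precisely the quantity controlled by Lemma~\ref{hninv}. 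Choosing $r$ to be the minimum of the exponents $r(k,\nu(X,\sigma))$ over the finitely many facets $\sigma\in F_v$ (the case $\nu=\infty$ being trivial since then $H_\sigma\equiv 0$ and $\chi^v_\sigma=0$), Lemma~\ref{hninv} gives that all coordinates $\psi_v^\sigma\circ(\Psi_{v,\varepsilon}^X)^{-1}$, together with their $y$-derivatives up to order $k$, tend to $0$ uniformly on $\Pi_v(\varepsilon^r)$; i.e.\ the inverse map converges in $C^k$ to the constant map sending everything to the vertex $v$ (the origin in $\psi_v$-coordinates). Since each $H_\sigma$ extends analytically, hence $C^k$-smoothly with bounded derivatives, to a neighborhood of $\Gamma^d$, composing with this $C^k$-convergent family and applying the chain/Fa\`a di Bruno rule yields $H_\sigma\circ(\Psi_{v,\varepsilon}^X)^{-1}\to H_\sigma(v)$ in $C^k$ on compact subsets of the interior, which is the content of convergence in the sense of Definition~\ref{remark:convergence}.

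The one point requiring a little care — and the step I expect to be the main obstacle — is the interplay between the shrinking domains $\Pi_v(\varepsilon^r)$ and the notion of $C^k$ convergence in Definition~\ref{remark:convergence}: I need the domain-convergence clause, namely that any compact $K\subseteq\Pi_v$ (equivalently any compact subset of $\inter\Pi_v$, since $\tilde X_v^\varepsilon$ is only defined where all $y_\sigma>0$... actually $\Pi_v(\varepsilon^r)$ sits inside $\Pi_v$ with a floor $\varepsilon^r$) is eventually contained in $\Pi_v(\varepsilon^r)$ as $\varepsilon\to 0^+$, which holds because $\varepsilon^r\to 0$ and $K$ is bounded away from the coordinate hyperplanes only on the relevant faces — more precisely, for $K\subseteq\Pi_v$ compact we have $\min_{y\in K}y_\sigma>0$ for those $\sigma$ where it matters, but since $K\subseteq\Pi_v$ may touch $\{y_\sigma=0\}$ I should instead note that the statement is about convergence of the restrictions and that every compact $K$ in the limiting domain $\Pi_v$ lies in $\Pi_v(\varepsilon^r)$ once $\varepsilon^r<\min_\sigma\min_{y\in K}y_\sigma$, which is positive after intersecting with any compact set on which $\tilde X_v^\varepsilon$ is defined. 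I would phrase this cleanly by remarking that the natural common domain of the family is $\inter\Pi_v=\bigcup_\varepsilon\Pi_v(\varepsilon^r)$ and that the uniform estimates of Lemma~\ref{hninv} are exactly tailored to give uniform control on each $\Pi_v(\varepsilon^r)$, hence on each fixed compact $K\Subset\inter\Pi_v$ for small $\varepsilon$. Once this bookkeeping is settled, the proof is just: identity by chain rule, then Lemma~\ref{hninv} plus smoothness of $H_\sigma$ and the chain rule.
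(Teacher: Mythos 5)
Your proof is correct and follows essentially the same route as the paper's: the identity $(\Psi_{v,\varepsilon}^X)_\ast X=\varepsilon^2\tilde X_v^\varepsilon$ comes from the diagonal Jacobian of $\Psi_{v,\varepsilon}^X$ (your chain-rule computation is the same calculation), and the $C^k$ convergence is reduced to Lemma~\ref{hninv} composed with the smooth functions $H_\sigma$, exactly as in the paper. Your additional bookkeeping --- taking $r$ as the minimum over the facets in $F_v$ and checking the domain clause of Definition~\ref{remark:convergence} on the exhausting sets $\Pi_v(\varepsilon^r)$ --- merely makes explicit what the paper leaves implicit.
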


\begin{proof}
	Let $F_v=\{\sigma_1,\ldots, \sigma_d\}$
	and  $(x_1,	\ldots, x_d)=(f_{\sigma1}(q),\ldots, f_{\sigma_d}(q))$
	be the coordinate system  introduced in Remark~\ref{notation-coordinate}.
	 Denote by $\nu_l$ the order of the facet $\sigma_l$. Let $H_l(x)$ be the function
	 $H_{\sigma_l}(q)$ expressed in this coordinate system. Then by~\eqref{def:order}, the equation $\frac{dq}{dt}=X(q)$ is equivalent to the system of differential equations	 
\begin{equation*}
\frac{\d x_l}{\d t}=x_l^{\nu_l}H_l(x),\quad 1\leq l\leq d\; .
\end{equation*}
In these coordinates
$$ \Psi_{v,\epsilon}^X(x_1,\ldots, x_d)= \epsilon^2\, \left(h_{\nu_1}(x_1), \ldots, h_{\nu_d}(x_d),  0,\ldots, 0\right)  .$$
Therefore, since the Jacobian of $\Psi_{v,\epsilon}^X$ can be identified with the diagonal matrix
$$D(\Psi_{v,\epsilon}^X)_x=-\epsilon^2\mathrm{diag}(x_1^{-\nu_1},\ldots, x_d^{-\nu_d} ) $$
 the first claim follows. 

 Fix $k\in\Nn$ and take $r= \min_{1\leq j\leq d} r(k,\nu_j)$,,
 where $r(k,n)$ is tha function in Lemma~\ref{hninv}.
Given $y\in \Pi_v(\epsilon^r)$, 
$$H_{\sigma_l}\left((\Psi_{v,\epsilon}^X)^{-1}(y)\right)=H_l\left( h^{-1}(\frac{y}{\epsilon^2})\right),$$
where $h^{-1}(\frac{y}{\epsilon^2}):=\left(h^{-1}_{\nu_1}( \epsilon^{-2}\,{y_{\sigma_1}}),\ldots, h^{-1}_{\nu_d}( \epsilon^{-2}\, {y_{\sigma_d}}) \right)$. 
Thus, by item (1) of Lemma~\ref{hninv} combined with Definition~\ref{skeletoncharacter},
the convergence follows.
\end{proof}

\section{Skeleton Vector Fields}
\label{skeleton}

In this section we define the skeleton of a  vector field  
$X\in \fX(\Gamma^d)$ and  its  corresponding skeleton flow map,   explaining how it is computed and its dynamics is analyzed.

\begin{defn}
\label{def skeleton vector field}
Given  $X\in\fX(\Gamma^d)$, the  {\em skeleton} of $X$
is the piecewise constant vector field $\chi$ on dual cone $\CC^\ast(\Gamma^d)$ which is constant and equal to $\chi^v$
on each sector $\Pi_v$, where
$\chi^v= (\chi^v_\sigma)_{\sigma\in F}$ is the skeleton character at $v$ introduced in Definition~\ref{skeletoncharacter}.
Notice that for every vertex $v$, the vector $\chi^v$ is tangent to $\Pi_v$.
\end{defn}

 Our goal is to study the piecewise linear flow generated by  the skeleton vector field $\chi$.  Remark~\ref{sign-character}  justifies that we call  \emph{$\chi$-repelling} a vertex $v$ such that $\chi^v_\sigma<0,\,\,\forall\sigma\in F_v$, and \emph{$\chi$-attracting} if $\chi^v>0,\,\,\forall\sigma\in F_v$.
 A vertex $v$ is said to be of \emph{saddle type} if  for some pair of facets $\sigma_1,\sigma_2\in F$ one has $\chi^v_{\sigma_1}\,\chi^v_{\sigma_2}<0$. The edges of $\Gamma^d$ are also classified as follows.

\begin{defn}\label{edges}
 Let $\gamma\in E$ be an edge  with end corners $(v,\sigma)$ and $(v',\sigma')$. We say that $\gamma$ is a \emph{defined type edge}  if either $\chi^{v}_{\sigma}\chi^{v'}_{\sigma'}\neq 0$ or else
 $\chi^{v}_{\sigma}=\chi^{v'}_{\sigma'}=0$. A defined type edge $\gamma$ is called 
 \begin{itemize}[leftmargin=2em]
 \item[$(1)$] a \emph{flowing-edge} if  $\chi^{v}_{\sigma}\chi^{v'}_{\sigma'}<0$, 
 \item[$(2)$] a \emph{neutral edge}  if $\chi^{v}_{\sigma}=\chi^{v'}_{\sigma'}=0$,
 \item[$(3)$] an \emph{attracting edge}  if  $\chi^{v}_{\sigma}<0$ and $\chi^{v'}_{\sigma'}<0$,
 \item[$(4)$]  a \emph{repelling  edge}  if  $\chi^{v}_{\sigma}>0$ and $\chi^{v'}_{\sigma'}>0$, 
 \end{itemize}
 
For a flowing-edge $\gamma$ with opposite corners $(v,\sigma)$ and $(v',\sigma')$, we  write $(v,\sigma)\buildrel\gamma\over\longrightarrow (v',\sigma')$, whenever $\chi^{v}_{\sigma}<0$ and $\chi^{v'}_{\sigma'}>0$.
 The vertexes $v$ and $v'$ are  respectively called the \emph{source} of $\gamma$, denoted by $s(\gamma)$, and  the \emph{target} of $\gamma$, denoted by $t(\gamma)$. 
 
\end{defn}

We call {\em orbit} of $\chi$ to any continuous piecewise affine function $c:I\to \CC^\ast(\Gamma^d)$, defined on some interval $I\subset\Rr$, such that 
	\begin{enumerate}
		\item  $c'(t)=\chi^v$ whenever $c(t)$ is interior to some  $\Pi_v$, 
		with $v\in V$, 
		\item   there is at most a countable set of times $t\in I$ such that $c(t)$ is not interior to any sector $\Pi_v$, with $v\in V$.
	\end{enumerate}

Writing $I=[t_0,t_n]$, a sequence of vertexes $(v_1,v_2,\ldots, v_m)$ such that for some times $t_0<t_1<\ldots < t_{n-1}<t_n$ one has
$c(t)\in \inter(\Pi_{v_j})$ for all
$t_{j-1}<t<t_j$, is called the \emph{itinerary} of the orbit segment $c$.
This implies that   $c(t_j)\in \Pi_{v_{j-1}}\cap \Pi_{v_{j}}=\Pi_{\gamma}$, where $v_{j-1}\buildrel\gamma\over\longrightarrow v_j$ is a flowing edge, 
for every  $j=1,\ldots, n-1$. If there are flowing edges $\gamma_0$ and $\gamma_n$ such that the endpoints  satisfy
$c(t_0)\in \Pi_{\gamma_0}$ and $c(t_n)\in \Pi_{\gamma_n}$ then the sequence of edges $(\gamma_0,\gamma_1,\ldots,\gamma_n)$ is also referred to as
the \emph{itinerary} of the orbit segment $c$.

\begin{defn}\label{regular-vectorfield}
We say that a vector field $X\in\fX(\Gamma^d)$ is \emph{regular} when all its edges  have defined type and 
\begin{enumerate}
\item $X$ has no singularities in $\inter(\gamma)$ 
for every  flowing edge $\gamma$,
\item $X$ vanishes along every neutral edge $\gamma$.
\end{enumerate}
\end{defn}

From now on, we will only consider  regular vector fields. Figure~\ref{edges_types} depicts the relation between
the orientation of the flow of $X$ along $\gamma$ and
the orientation of the flow of $\chi$ around $\Pi_\gamma$. 
 
\begin{figure}[h]
\begin{center}
\includegraphics[width=11cm]{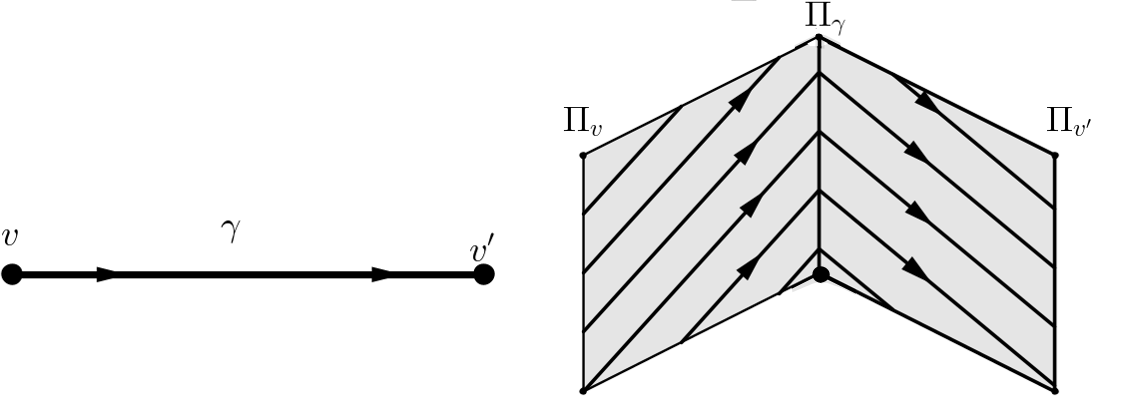}
\end{center} 
\caption{A flowing edge}
\label{edges_types}
\end{figure}

Given vertex $v$ of saddle type together with an incoming flowing-edge $v_\ast\buildrel\gamma \over\longrightarrow v$ and an outgoing flowing-edge  $v \buildrel\gamma' \over\longrightarrow v'$, denoting by $\sigma_\ast$   the facet opposed to $\gamma'$ at $v$  we define the sector  $\Pi_{\gamma,\gamma'}=\Pi_{\gamma,\gamma'}^\chi$
\begin{equation}\label{hyperplane}
\Pi_{\gamma,\gamma'} :=\left\{\, y\in {\rm int}(\Pi_{\gamma})\,\colon\,  y_\sigma-\frac{\chi^{v}_\sigma}{\chi^{v}_{\sigma_\ast}}\, y_{\sigma_\ast} > 0, \, \forall  \sigma \in F_{v},\; \sigma\ne \sigma_\ast  \; \right\} 
\end{equation}
and the linear map $L_{\gamma,\gamma'}=L^\chi_{\gamma,\gamma'}:\Pi_{\gamma,\gamma'}\to\Pi_{\gamma'}$ 
\begin{equation}\label{L:gamma:gammaprime}
L_{\gamma,\gamma'}(y):=  \left(\, y_\sigma-\frac{\chi^{v}_\sigma}{\chi^{v}_{\sigma_\ast}} \, y_{\sigma_\ast}\, \right)_{\sigma\in F}\; .
\end{equation}
 Notice that $\Pi_{\gamma'}=\{y\in \Pi_{v}: y_{\sigma_\ast}=0\} $

\begin{proposition}\label{proposition:skeletonflow}
Given a  vertex $v$ of saddle type together with incoming and outgoing (flowing) edges $\gamma,\gamma'$ as above, the sector $\Pi_{\gamma,\gamma'}$ is the set of points $y\in {\rm int} (\Pi_{\gamma})$ which are connected by
the orbit segment $\{\,c(t)=y+t\chi^v\,
\colon t\geq 0, \, c(t)\in \Pi_{v}\}$ to 
$L_{\gamma,\gamma'}(y)\in {\rm int}(\Pi_{\gamma'})$.
\end{proposition}
\begin{proof}
Straightforward. 
\end{proof}

The map $L_{\gamma,\gamma'}$ is a Poincar\'e for the flow  of $\chi$, which is represented by the following $F\times F$ matrix
\begin{equation} \label{matrix-M}
 M_{\gamma,\gamma'}=\left( \delta_{\sigma\sigma'} -  \frac{\chi^{v}_{\sigma}}{\chi^{v}_{\sigma_\ast}}\delta_{\sigma_\ast\sigma'}\right)_{\sigma,\sigma'\in F}\;, 
 \end{equation}
where $\delta$ stands for the Kronecker delta symbol.
This matrix gives  a global representation of the flow of $\chi$ which is  suitable for computational purposes. The image of the map $L_{\gamma,\gamma'}$ is the convex cone $\Pi^{-\chi}_{\gamma',\gamma}$
associated with the vector field $-\chi$ and the pair $\gamma',\gamma$ of reversed flowing edges.
Clearly $L^{-\chi}_{\gamma',\gamma}=(L^{\chi}_{\gamma,\gamma'})^{-1}$.

\begin{remark}\label{remark:skeletonflow}
 If $v$ is a saddle type vertex  then
any line parallel to $\chi^v$ through a point in  $\inter(\Pi_v)$
must intersect at least two boundary facets of $\Pi_v$.

Conversely, if an orbit segment $c(t)=p+t\,\chi^v$ through a point
$p\in\inter(\Pi_v)$ crosses  the boundary of $\Pi_v$ at two points,
$q=p+ t_0\,\chi^v$, with $t_0<0$, and $q'=p+t_1\,\chi^v$, with $t_1>0$, and if  $\sigma',\sigma_\ast\in F_v$ are  the facets of $\Pi_v$ such that  
$q_{\sigma'}=0$ and $q_{\sigma_\ast}'=0$
then  $\chi^v_{\sigma'}>0$ and $\chi^v_{\sigma_\ast}<0$. This implies that $v$ is of saddle type.

In this setting, if  $\gamma, \gamma'$ are the edges through $v$, respectively  in the corners $(v,\sigma')$ and $(v,\sigma_\ast)$, then
$q\in \Pi_\gamma=\{y\in\Pi_v\colon y_{\sigma'}=0\}$ and $q'\in\Pi_{\gamma'}=\{y\in\Pi_v\colon y_{\sigma_\ast}=0\}$. Moreover,
if both $\gamma$ and $\gamma'$ are flowing edges then
$q\in \Pi_{\gamma,\gamma'}$ and $q'=L_{\gamma,\gamma'}(q)$.

If the vertex $v$ is attracting or repelling (instead of saddle type),  \ie if all the characters  $\chi^{v}_\sigma$, with $\sigma\in F_{v}$, have the same sign, then 
$\Pi_{\gamma,\gamma'}=\emptyset$.
 In these cases, it is not possible to connect any point in $\Pi_{\gamma}$ to a point of $\Pi_{\gamma'}$ through a line parallel to the  constant vector $\chi^v$, see Figure~\ref{vertex_types}.
 \end{remark} 
 \begin{remark}
 	Points in the boundary of $\Pi_{\gamma}$ are in the intersection of three or more sectors $\Pi_{v}$ with  $v\in V$. Hence, if an orbit ends up in one of these points it is not  possible to continue it in a unique way. 
In the  sequel we disregard these types of orbits.
\end{remark}
\begin{figure}[h]
\begin{center}
\includegraphics[width=10cm]{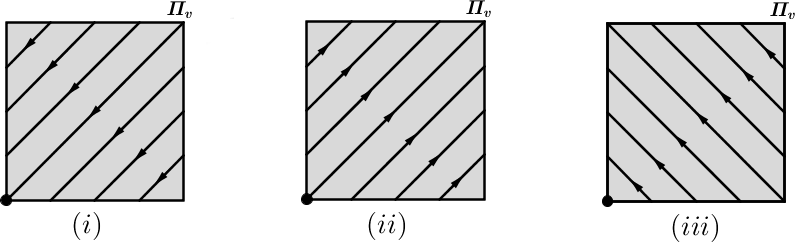}
\end{center} 
\caption{Vertex types: $(i)$  attracting, $(ii)$ repelling and $(iii)$ saddle type}
\label{vertex_types}
\end{figure} 
We now define \textit{skeleton flow maps} along  chains of saddle type vertexes.  Let 
{\small
\begin{equation}\label{path1}
v_0\buildrel\gamma_0 \over\longrightarrow  v_1 \buildrel\gamma_1 \over\longrightarrow  v_2 \longrightarrow  \ldots \longrightarrow  v_m \buildrel\gamma_m \over\longrightarrow v_{m+1}
\end{equation}}
be a chain of flowing-edges.
 The sequence $\xi =(\gamma_0,\gamma_1,\ldots, \gamma_m)$ will be called a \emph{heteroclinic path}, a \emph{heteroclinic cycle} when $\gamma_{m}=\gamma_0$.

\begin{defn}\label{poincaremap}
 Given a  heteroclinic path  $\xi =(\gamma_0,\gamma_1,\ldots, \gamma_m)$, we define the \emph{skeleton flow map (of $\chi$) along  $\xi$} to be the composition mapping $\pi_\xi:\Pi_\xi\to \Pi_{\gamma_m}$ 
 $$\pi_\xi:=L_{\gamma_{m-1},\gamma_m}\circ\ldots\circ L_{\gamma_0,\gamma_1}\; ,$$ 
 with domain   
\begin{align*}
\Pi_\xi & := {\rm int}(\Pi_{\gamma_0}) \cap 
\bigcap_{j=1}^{m} (L_{\gamma_\ast,\gamma_j}\circ\ldots\circ L_{\gamma_0,\gamma_1})^{-1}
 {\rm int}(\Pi_{\gamma_j})   \;. 
\end{align*}
\end{defn}

%
%
%
%

For every $y\in \Pi_\xi$,  $y\in \inter(\Pi_{\gamma_0})$,
$\pi_\xi(y)\in \inter(\Pi_{\gamma_m})$ and moreover  there exists an orbit segment from $y$ to $\pi_\xi(y)$ with itinerary $\xi$.

We also define the matrix 
\begin{equation}
\label{poincare_path_matrix}
M_\xi:= M_{\gamma_{m-1},\gamma_{m}}\cdots  M_{\gamma_{1},\gamma_{2}}\,  M_{\gamma_{0},\gamma_{1}}  
\end{equation}
where the factor matrices $M_{\gamma_{j-1},\gamma_{j}}$ were defined in~\eqref{matrix-M}. This matrix $M_\xi$ induces a linear endomorphism on $\Rr^F$
whose restriction to the sector $\Pi_\xi$ matches the skeleton flow map $\pi_\xi$.

 In order to analyze the dynamics of the flow of the skeleton vector field $\chi$ it is convenient to introduce the concept of {\em structural set} and  its associated skeleton flow  map. 
 
 \begin{defn}\label{structural:set}
 A non-empty set of flowing edges $S$ 
 is said to be a {\em structural set} for  $\chi$ if
 every heteroclinic cycle  contains an edge in $S$.
 \end{defn}

Notice that the structural set $S$ is in general not unique.
The concept of structural set can be  defined for general directed graphs.
It corresponds to the homonym  notion
introduced by L. Bunimovich and B. Webb~ \cite{BW2012},  but here applied to the line graph\footnote{ The line graph of a directed graph $G$, denoted by $L(G)$, is the graph whose vertices are the edges of $G$, and where $(\gamma,\gamma')\in E\times E$ is an edge of $L(G)$ if the end-point of $\gamma$ coincides with the start-point of $\gamma'$. }.

We say that a heteroclinic path $\xi=(\gamma_0,\ldots,\gamma_m)$ is a \emph{branch} of $S$, or shortly an $S$-branch,   if
\begin{enumerate}
\item $\gamma_0,\gamma_m\in S$,
\item $\gamma_j\notin S$ for all $j=1,\ldots, m-1$.
\end{enumerate}
We denote by $\Bscr_S(\chi)$ the set of all $S$-branches.

\begin{defn}
\label{def skeleton flow map}
The \textit{skeleton flow  map}  $\pi_S:D_S\to\Pi_S$ is defined by  
$$\pi_S(y):=\pi_\xi(y)\quad \text{ for all } \; y\in\Pi_\xi,$$
where 
$$\Pi_S:=\cup_{\gamma\in S}\Pi_\gamma\quad \text{ and } \quad 
D_S:=\cup_{\xi\in\Bscr_S(\chi)}\Pi_\xi.$$
\end{defn}


We now provide a sufficient condition
for  the skeleton flow map $\pi_S$ to be a closed dynamical system.

\begin{proposition}  \label{partition}
Given a  skeleton vector field $\chi$ on $\CC^\ast(\Gamma^d)$  and a structural set $S$, assume  
\begin{enumerate}
\item every edge of the polytope is either neutral or a flowing edge.
\item all vertexes are of saddle type,
\end{enumerate}
Then  $D_S$ has full Lebesgue measure in 
$ \Pi_S$.
\end{proposition}

\begin{proof} 
	The inclusion $D_S\subseteq\Pi_S$ is obvious.

	For each flowing edge $\gamma$ with $v=t(\gamma)$,
	let $D_\gamma:=\cup_{s(\gamma')=v} \Pi_{\gamma,\gamma'}$, with the
	union taken over the set of  flowing edges $\gamma'$ such that $s(\gamma')=v$. Clearly
	$D_\gamma\subset \Pi_\gamma$.
	We claim that
	$$  \Pi_\gamma\setminus D_\gamma \subseteq  \partial\Pi_\gamma \cup \bigcup_{\gamma'\colon s(\gamma')=t(\gamma)} L_{\gamma,\gamma'}^{-1} ( \partial \Pi_{\gamma'} )  $$
	which in particular implies that this set has codimension one in the sector $\Pi_\gamma$. Let us now prove the claim.
	By (2) the vertex $v$ is of saddle type. 
	Since $v=t(\gamma)$, the corner $(v,\gamma)$ has positive character.	Hence, given $q\in \inter(\Pi_\gamma)$
	the orbit segment $c(t):=q+t\chi^v$ enters $\inter(\Pi_v)$ for  $t$ positive and small. By Remark~\ref{remark:skeletonflow}, this orbit segment
	will eventually hit another boundary point $q'\in \Pi_{\gamma'}\subset \partial\Pi_v$ for some edge $\gamma'$ 	through $v$. The same remark shows that $\chi$ has opposite signs at the corners $(v,\gamma)$ and $(v,\gamma')$. Thus,  by item (1)  $\gamma'$ is also a  flowing edge and $q'=L_{\gamma,\gamma'}(q)$. Therefore,
	if $q\notin D_\gamma$ then
	$$q\in \bigcup_{\gamma'\colon s(\gamma')=v} L_{\gamma,\gamma'}^{-1} ( \partial \Pi_{\gamma'} )$$
	which proves the claim.
	
	Let  $D=\cup_\gamma D_\gamma$ and $\Pi=\cup_\gamma \Pi_\gamma$  with  the unions taken over all flowing edges. Define then a skeleton flow  map
	$\pi\colon D\to \Pi$ setting $\pi(y)=L_{\gamma,\gamma'}(y)$ whenever
	$\gamma,\gamma'$ are flowing edges such that $t(\gamma)=s(\gamma')$ and
	$y\in \Pi_{\gamma,\gamma'}$. The previous claim implies that $D$ has ful Lebesgue measure in $\Pi$. In fact it shows that  $\Pi\setminus D$ has codimension one in $\Pi$. The set 
	$D_\infty = \cap_{n\geq 0} \pi^{-n}(D)$ has also full measure because $\pi:D\to\Pi$ is locally a linear isomorphism and 
	$\Pi\setminus D_\infty=\cup_{n\geq 0} \pi^{-n}(\Pi\setminus D)$ is a countable union of  sets with zero Lebesgue measure.

	Consider now $y\in \Pi_S$, assume that $y\in D_\infty$  and consider the itinerary $(\gamma_0,\gamma_1,\ldots\,)$  of
	the corresponding (forward) infinite orbit.
	Then $\gamma_0\in S$. 
	Assumptions (1)-(2) imply that the flowing edge graph has no terminal points.
	If we had $\gamma_j\notin S$ for all $j\geq 1$,
	there would be heteroclinic cycles disjoint from $S$, which  contradicts the fact that $S$ is a structural set. Hence some initial segment $\xi=(\gamma_0,\ldots, \gamma_m)$ of this itinerary is an $S$-branch, and $y\in \Pi_\xi\subseteq D_S$. 
	This proves that $\Pi_S\setminus D_S\subset \Pi\setminus D_\infty$ has zero Lebesgue measure.
\end{proof}

\begin{remark}
The proof of Proposition~\ref{partition} shows that the maximal invariant set
$$ \hat D_S := \bigcap_{n\in\Zz} (\pi_S)^{-n}(D_S)$$
has full Lebesgue measure in $\Pi_S$. Hence the skeleton flow map  induces a homeomorphism $\pi_S:\hat D_S\to \hat D_S$ on the Baire space  $\hat D_S$.
\end{remark}


\section{Asymptotic Poincar\'e Maps }
\label{asymptotics}

In this section, we state and prove our main result. Given a structural set $S$ consider the system of cross sections
$\Sigma_S=\cup_{\gamma\in S} \Sigma_\gamma^-$ transversal to the flowing edges in $S$. Then  the  Poincar\'e  map induced by the flow of a regular vector field $X\in \fX(\Gamma^d)$  on $\Sigma_S$
 is ``asymptotically conjugate'' to the skeleton flow map $\pi_S$ of   $\chi$.

For any flowing edge $\gamma$ through a vertex $v$ define
$$\Sigma_{v,\gamma}:=(\Psi_{v,\epsilon}^X)^{-1}(\inter(\Pi_{\gamma})) . $$
This cross section is transversal to the flow of $X$ and is an inner facet of the tubular neighborhood $N_v$. We will write
$\Sigma^-_\gamma$ or $\Sigma^+_\gamma$, instead of $\Sigma_{v,\gamma}$, according to the sign of the character
$\chi$ at the corner $(v,\gamma)$.


Let $\Dscr_{\gamma}$ be the set of points $x\in \Sigma^-_\gamma$ such that the forward  orbit of $x$ has a transversal intersection with $\Sigma^+_\gamma$.

\begin{defn}
\label{Pgamma}
The global Poincar\'e map along $\gamma$, see Figure~\ref{local-global-poincare},
$$P_{\gamma}: \Dscr_{\gamma}\subset
\Sigma_\gamma^-  \to \Sigma_\gamma^+ $$
is defined by
$P_{\gamma}(x):= \varphi_X^{\tau(x)}(x)$,
where $\varphi_X^t$ stands for the flow of $X$ and
$$ \tau(x)= \min\{\, t>0\,\colon\, 
\varphi_X^t(x) \in \Sigma_{v',\gamma}\, \} \;.$$
Both functions $\tau$ and $P_\gamma$ are analytic.
\end{defn}
 Let
\begin{equation}\label{pivepsilon1}
\Pi_{\gamma} (\epsilon):=\{\,y\in\Pi_{\gamma}\,\colon\, y_{\sigma} \geq\epsilon \quad\text{whenever}\quad \gamma \subset \sigma\} .
\end{equation}
Notice that $\lim_{\epsilon\to 0} \Pi_{\gamma} (\epsilon)=\inter(\Pi_{\gamma})$. Given $k\in\Nn$   take $r=r(k,X)$ according to  Lemma~\ref{lemma:rescal}. 
 
\begin{lemma}\label{pgamma}
Given a  flowing-edge  $$(v,\sigma_0)\buildrel\gamma \over\longrightarrow (v',\sigma'),$$
 let \,\, $\UU_{\gamma}^\epsilon\subset \Pi_{\gamma}(\epsilon^r)$ be the domain of the map 
 $$F_{\gamma}^\epsilon:=\Psi_{v',\epsilon}^X\circ P_{\gamma} \circ(\Psi_{v,\epsilon}^X)^{-1}.$$ 
 Then
\begin{align*}
\lim_{\epsilon\to0^+}F^\epsilon_{\gamma|_{\UU_{\gamma}^\epsilon}}=\id_{\Pi_{\gamma}} 
\end{align*} 
in the $C^k$ topology, in the sense of Definition~\ref{remark:convergence}.
\end{lemma}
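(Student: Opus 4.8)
The plan is to analyze the formula~\eqref{edgepoincare} for $P_{\gamma'}$ in rescaled coordinates and show that the correction term vanishes as $\varepsilon\to 0^+$. First I would write out $F_{\gamma'}^\varepsilon = \Psi_{v',\varepsilon}^X\circ P_{\gamma'}\circ (\Psi_{v,\varepsilon}^X)^{-1}$ explicitly using Definition~\ref{defn:vcoor.ch.}: a point $y\in\Pi_{\gamma'}(\varepsilon^r)$ is first mapped back to $x_j = \psi_v^{-1}(h_{\nu_j}^{-1}(y_j/\varepsilon^2))$ for $j=2,\dots,d$, then pushed through $P_{\gamma'}$ via~\eqref{edgepoincare}, and finally mapped forward again by $y'_j = \varepsilon^2 h_{\nu_j}\big((P_{\gamma'})_j\big)$. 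Composing with $h_{\nu_j}\circ h_{\nu_j}^{-1}=\id$, the $j$-th component of $F_{\gamma'}^\varepsilon(y)$ becomes simply
\[
y_j \;-\; \varepsilon^2\int_0^{\tau} H_j\big(\varphi^s(0,x(0))\big)\,ds,
\]
so that $F_{\gamma'}^\varepsilon(y) - y = -\varepsilon^2\big(\int_0^\tau H_j(\varphi^s)\,ds\big)_j$. Thus the whole content of the lemma is that this integral term, together with all its derivatives up to order $k$ in $y$, is $o(\varepsilon^{-2})$ uniformly on compact subsets of $\inter(\Pi_{\gamma'})$.

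Next I would bound the transit time $\tau(x)$ and the integrand. Since $X$ is regular, the flowing-edge $\gamma'$ contains no singularities of $X$ in its interior, so the flow box coordinate $t$ runs over a \emph{bounded} interval: $\tau(x)$ stays bounded (by compactness of the flow box and transversality at the exit cross-section) as long as the entry point stays in the compact set we are tracking. More precisely, for $y$ in a fixed compact $K\subset\inter(\Pi_{\gamma'})$ the preimage coordinates $x_j = h_{\nu_j}^{-1}(y_j/\varepsilon^2)$ are, by Lemma~\ref{hninv}, as small as we like (they tend to $0$ with all derivatives, uniformly), so the entry point converges to $q_{v,\gamma'}$ and $\tau(x)$ stays uniformly bounded. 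The functions $H_j$ are analytic on a neighborhood of $\Gamma^d$, hence bounded with bounded derivatives on the relevant compact region, so $\big|\int_0^\tau H_j(\varphi^s)\,ds\big|$ is uniformly bounded; multiplied by $\varepsilon^2$ it tends to $0$. This handles the $C^0$ statement.

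For the $C^k$ statement I would differentiate $F_{\gamma'}^\varepsilon(y)-y$ up to order $k$ in $y$. Each $y$-derivative of the integral term produces, via the chain rule, a finite sum of terms involving (i) $y$-derivatives of the flow $\varphi^s$ of~\eqref{flow box} with respect to the initial data, (ii) $y$-derivatives of $\tau$, and (iii) $y$-derivatives of the preimage map $x_j\mapsto h_{\nu_j}^{-1}(y_j/\varepsilon^2)$. The last ones are controlled precisely by the quantitative estimate of Lemma~\ref{hninv} (this is why the restriction to $\Pi_{\gamma'}(\varepsilon^r)$ with the specific $r=r(k,X)$ appears): on that domain, $\frac{d^i}{dy^i}h_{\nu_j}^{-1}(y_j/\varepsilon^2)\to 0$ for $0\le i\le k$. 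The flow-derivative terms in (i)–(ii) are bounded by standard smooth dependence of solutions of~\eqref{flow box} on initial conditions, over the bounded time interval $[0,\tau]$ and the compact parameter region, with constants depending only on $k$ and $X$. So each derivative of the bracketed integral is $O(1)$ uniformly, and after multiplying by $\varepsilon^2$ the whole expression, with all its derivatives up to order $k$, converges to $0$ uniformly on $K$; together with the domain-convergence statement ($K\subset\UU_{\gamma'}^\varepsilon$ for small $\varepsilon$, which follows because the rescaled image of $D_{v,\gamma'}$ exhausts $\inter(\Pi_{\gamma'})$), this gives the claimed $C^k$ limit in the sense of Definition~\ref{remark:convergence}.

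The main obstacle I anticipate is the bookkeeping in step three: combining the chain-rule expansion of high-order $y$-derivatives of a composition of the flow map with the $h_{\nu_j}^{-1}$ rescaling, and checking that the worst-case powers of $\varepsilon^{-1}$ coming from the $h_{\nu_j}^{-1}$ derivatives are exactly absorbed by the $\varepsilon^2$ prefactor for the chosen $r(k,X)$ — i.e. that Lemma~\ref{hninv}'s exponent is good enough uniformly in $j$. Everything else (boundedness of $\tau$, smooth dependence on initial data, analyticity of the $H_j$) is routine given regularity of $X$ and the flow-box normal form~\eqref{flow box}.
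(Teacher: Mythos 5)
Your proposal is correct and follows essentially the same route as the paper's proof: the exact cancellation of $h_{\nu_j}\circ h_{\nu_j}^{-1}$ giving the $j$-th component $y_j-\varepsilon^2\int_0^{\tau}H_j(\varphi^s)\,ds$, the local boundedness of $\tau$ and of the derivatives of $H_j\circ\varphi^s$ via smooth dependence on initial conditions, and the use of Lemma~\ref{hninv} on $\Pi_{\gamma'}(\varepsilon^r)$ to control the derivatives coming from the rescaled preimage. No substantive difference or gap to report.
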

\begin{proof}
If 
$F_{v}=\{\sigma_0,\sigma_1,\sigma_2,\ldots,\sigma_{d-1}\}$ then $ F_{v'} =\{\sigma_1,\ldots,\sigma_{d-1},\sigma'\}$ and $\{\sigma\in F : \gamma\subset \sigma\}=\{\sigma_1,\ldots,\sigma_{d-1}\}$. Since inside $\Pi_{\gamma}$ we have  $y_\sigma=0$ whenever  $\gamma \not \subset \sigma$, we can express points in $\Pi_\gamma$ as
lists  $(y_1,\ldots,y_{d-1})$ where each $y_j$ abbreviates $y_{\sigma_j}$.

To simplify notations, let's use $x_l$ and $\nu_l$, respectively, for the coordinate and order associated to $\sigma_l$, where $l=1,...,d-1$. Consider the flow box $(V,(t,x_1,\ldots,x_{d-1}))$ with $V=N_\gamma$ and the coordinate system
introduced in the beginning of Section~\ref{rescaling}.
In this flow box the vector field's equation reads as:
\begin{equation}
\label{flow box}
\left\{
\begin{array}{cl}
\dot{t}  & =  1\\
\dot{x}_l  & =  x_l^{\nu_l}\, H_l(t,x),\quad l=1,\ldots,d-1,
\end{array}
\right.
\end{equation}   
where $H_l(t,x)$ is defined in~\eqref{def:order}.
Integrating  
$$ \frac{d}{dt}h_{\nu_l}(x_l) = - \dot{x}_l\,x_l^{-\nu_l} 
=-H_l(t,x)\quad l=1,\ldots,d-1, $$
yields  
$$h_{\nu_l}(x_l(t)) = h_{\nu_l}( x_l(0)) - \int_0^t H_l(\varphi^s(0,x(0)))\, d s\quad l=1,\ldots,d-1 ,$$
where 
$\varphi^t$ stands for the flow of the vector field~\eqref{flow box}.
Therefore
\begin{equation}\label{edgepoincare}
P_{\gamma} (x)=\left\{ h^{-1}_{\nu_l} \left(\, h_{\nu_l}(x_l) - \int_0^{\tau(x)} H_l(\varphi^s(0,x))\, d s \right)\,  \right\}_{l=1,\ldots,d-1}
\end{equation}
where $\tau(x)$ is the time that the orbit starting at $x\in\Sigma_\gamma^-$ takes to hit the cross-section $\Sigma_\gamma^+$.

Expressing $(\Psi^X_{v,\epsilon})^{-1}$ in the coordinate system
 $(x_0,x_1,\ldots,x_{d-1})$   on the neighborhood $N_{v}$, for every point $(y_1,\ldots,y_{d-1})\in \Pi_{\gamma}$,  
$$(\Psi^X_{v,\epsilon})^{-1}(y)   =\left(1, h^{-1}_{\nu_1}( \frac{y_1}{\epsilon^2}),\ldots ,h^{-1}_{\nu_{d-1}}( \frac{y_{d-1}}{\epsilon^2}) \,\right). $$
By~\eqref{edgepoincare} we have
\begin{align*}
P_{\gamma}\left(  \Psi_{v,\epsilon}^{-1}(y) \right)
= \left(\, h^{-1}_{\nu_l}\left[ \, h_{\nu_l}(h^{-1}_{\nu_l} ( \frac{y_l}{\epsilon^2}) ) -  \int_0^{\tau(\Psi_{v,\epsilon}^{-1}(y) )} H_{l}\, d s \,\right]\,\right)_{1\leq l\leq d-1} .
\end{align*}
Hence $F^\epsilon_{\gamma} (y_1,\ldots, y_{d-1})=(y'_1(\epsilon),\ldots, y'_{d-1}(\epsilon))$,  where by Definition~\ref{defn:vcoor.ch.}
\begin{align*}
y^\prime_l(\epsilon) & =  \epsilon^2 h_{\nu_l}\left(\, h^{-1}_{\nu_l}\left[ \, h_{\nu_l}(h^{-1}_{\nu_l} ( \frac{y_l}{\epsilon^2}) ) -  \int_0^{\tau(\Psi_{v,\epsilon}^{-1}(y))} H_{l}\, d s \,\right]\,\right) \\
& = y_l - \epsilon^2 \int_0^{\tau((\Psi_{v,\epsilon})^{-1}(y))} H_{l}(\varphi^s(\Psi_{v,\epsilon}^{-1}(y)))\, d s .
\end{align*}

Notice that $\tau$ is analytic,
and together with its derivatives is locally bounded 
in a neighborhood of $\gamma \cap \Sigma_\gamma^-$. Moreover,
every derivative $(D^k \varphi^t)_x$ is a solution of a system
of linear equations with coefficients depending on $\varphi^t(x)$ and on the lower order derivatives $(D^r \varphi^t)_x$ with $r<k$.
Arguing recursively, we can prove that for any $k\geq 0$ the $k$-th order  derivatives of  $H_l(t,\varphi^t(x))$ are uniformly bounded 
in a neighborhood of $\gamma \cap \Sigma_\gamma^-$, for $0\leq t\leq \tau(x)$. 
Hence, it follows from item (1) of Lemma~\ref{hninv}
that $F_\epsilon$ converges to the identity map in the $C^k$ topology.
\end{proof}
\begin{remark}\label{sigma-gamma}
By Lemma~\ref{pgamma}, for any flowing edge $v\buildrel\gamma\over\longrightarrow v',$ we can identify the two sections $\Sigma^-_\gamma$ and $\Sigma^+_\gamma$. We will refer to the identified section simply as $\Sigma_\gamma$.
\end{remark}

Let $\gamma$, $\gamma'$ be flowing edges such that
$t(\gamma)=s(\gamma')=v$.
We denote by  $\Dscr_{\gamma,\gamma'}$  the set of points $x\in  \Sigma_{v,\gamma}$ 
such that the  forward orbit of $x$ has a transversal intersection with $\Sigma_{v,\gamma'}$.

\begin{defn}
\label{Pgammagamma'}
The local Poincar\'e map 
$$P_{\gamma,\gamma'}: \Dscr_{\gamma,\gamma'}\subset
\Sigma_{v,\gamma}\to  \Sigma_{v,\gamma'}$$
is defined by
$P_{\gamma,\gamma'}(x):= \varphi_X^{\tau(x)}(x)$, see Figure~\ref{local-global-poincare}, where
$$ \tau(x)= \min\{\, t>0\,\colon\, 
\varphi_X^{t}(x) \in \Sigma_{v,\gamma'}\, \} \;.$$
\end{defn}

  Given $k\in\Nn$   take $r=r(k,X)$ according to  Lemma~\ref{lemma:rescal}.
\begin{lemma}\label{vertexpoincare} 
Given flowing edges $\gamma,\gamma'$  
such that $t(\gamma)=s(\gamma')=v$,
let \,$\UU_{\gamma,\gamma'}^\epsilon\subset \Pi_{\gamma}(\epsilon^r)$  be the domain of the map $$F_{\gamma,\gamma'}^\epsilon:=\Psi_{v,\epsilon}^X\circ P_{\gamma,\gamma'} \circ(\Psi_{v,\epsilon}^X)^{-1}.$$
Then
\begin{align*}
\lim_{\epsilon\to0^+}\left(F_{\gamma,\gamma'}^\epsilon\right)_{|_{\UU_{\gamma,\gamma'}^\epsilon}}=
 L_{\gamma,\gamma'} \; 
\end{align*}
in the $C^k$ topology, in the sense of Definition~\ref{remark:convergence}.
\end{lemma}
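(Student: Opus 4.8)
The idea is to reduce the local passage near the vertex $v$ to an explicit computation in the rescaled coordinates, using the fact that, by Lemma~\ref{lemma:rescal}, the rescaled vector field $\tilde X_v^\varepsilon$ converges to the constant field $\chi^v$ on $\Pi_v(\varepsilon^r)$ in the $C^k$ topology. First I would write the flow $\varphi_X^t$ near $v$ in the $v$-coordinate system $\psi_v = (x_1,\dots,x_d)$, where by \eqref{def:order} it reads $\dot x_j = x_j^{\nu_j} H_j(x)$. Pushing forward by $\Psi_{v,\varepsilon}^X$, the conjugated flow $\Psi_{v,\varepsilon}^X\circ\varphi_X^t\circ(\Psi_{v,\varepsilon}^X)^{-1}$ is, by Lemma~\ref{lemma:rescal}, the flow of $\varepsilon^2\tilde X_v^\varepsilon$; rescaling time by $\varepsilon^2$ we get the flow of $\tilde X_v^\varepsilon$, which converges to the constant flow $y\mapsto y+s\,\chi^v$ on compact subsets of $\inter(\Pi_v)$, together with all derivatives up to order $k$.

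Next I would identify the target cross-section. The cross-section $\Sigma_{v,\gamma}$ corresponds in dual-cone coordinates to $\Pi_\gamma = \{y_{\sigma_0}=0\}$ intersected with $\Pi_v$ (the facet $\sigma_0$ opposite to $\gamma$ at $v$), while $\Sigma_{v,\gamma'}$ corresponds to $\{y_{\sigma_1}=0\}\cap\Pi_v$, where $\sigma_1$ is the facet opposite to $\gamma'$ at $v$; here $\sigma' = \sigma_1$ in the notation of Remark~\ref{remark:reorder} and Definition~\ref{hyperplane}. Since $v$ is the source of $\gamma'$ we have $\chi^v_{\sigma_1} < 0$, so the constant-field orbit starting at $y\in\Pi_\gamma$ with $y_{\sigma_1} = c > 0$ reaches $\{y_{\sigma_1}=0\}$ at the (unique, positive) time $s^*(y) = -y_{\sigma_1}/\chi^v_{\sigma_1}$. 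Evaluating $y + s^*(y)\chi^v$ component-wise gives precisely
\[
\Bigl(y_\sigma - \frac{\chi^v_\sigma}{\chi^v_{\sigma_1}}\,y_{\sigma_1}\Bigr)_{\sigma\in F} = L_{\gamma,\gamma'}(y),
\]
and the condition that this limiting orbit segment stays in $\inter(\Pi_v)$ until it hits $\{y_{\sigma_1}=0\}$ — i.e. that all other coordinates $y_\sigma - (\chi^v_\sigma/\chi^v_{\sigma_1})y_{\sigma_1}$ stay positive — is exactly the defining condition of $\Pi_{\gamma,\gamma'}$ in Definition~\ref{hyperplane}. So the limiting hitting map is $(L_{\gamma,\gamma'})_{|\Pi_{\gamma,\gamma'}}$, with the correct domain.

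The remaining work is to upgrade this from the limit constant flow to the actual rescaled flow $F^\varepsilon_{\gamma,\gamma'}$, and this is the main technical obstacle. I would fix a compact set $K\subset\Pi_{\gamma,\gamma'}$ and argue that: (i) by transversality of $\tilde X_v^\varepsilon$ to $\{y_{\sigma_1}=0\}$ for small $\varepsilon$ (which follows from $\chi^v_{\sigma_1}<0$ and $C^0$ convergence), the first-hitting time $\tau_\varepsilon(y)$ is well defined on $K$ for small $\varepsilon$ and depends $C^k$-smoothly on $y$, by the implicit function theorem applied to the (smooth, by Lemma~\ref{lemma:rescal}) flow; (ii) $\tau_\varepsilon \to s^*$ in $C^k(K)$, again by the implicit function theorem together with the $C^k$ flow convergence and the uniform lower bound on $|\chi^v_{\sigma_1}|$; (iii) consequently the hitting map $y\mapsto \Phi^{\tau_\varepsilon(y)}_\varepsilon(y)$, being a composition of $C^k$-convergent maps, converges in $C^k(K)$ to $y\mapsto y + s^*(y)\chi^v = L_{\gamma,\gamma'}(y)$. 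For the domain-convergence clause of Definition~\ref{remark:convergence} I would note that $\UU^\varepsilon_{\gamma,\gamma'}$ contains every such $K$ for $\varepsilon$ small: a point of $K\subset\inter(\Pi_\gamma)$ has all relevant coordinates bounded below by some $\delta>0$, hence lies in $\Pi_\gamma(\varepsilon^r)$ for small $\varepsilon$, and its forward $\tilde X_v^\varepsilon$-orbit stays a definite distance from $\partial\Pi_v$ up to time $\tau_\varepsilon(y)$, so $P_{\gamma,\gamma'}$ (equivalently the time-$\tau_\varepsilon$ map) is defined there. Here one must also invoke regularity of $X$ — no singularities on $\inter(\gamma)$, $\inter(\gamma')$ — to ensure the genuine Poincaré map $P_{\gamma,\gamma'}$ is the flow-box map and the orbit does not get trapped before reaching $\Sigma_{v,\gamma'}$. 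The delicate point throughout is that $\tau_\varepsilon$ blows up (the orbit spends a long real time $\varepsilon^{-2}\tau_\varepsilon$ near $v$), so all estimates must be carried out in the \emph{rescaled} time variable where times stay bounded; this is exactly what Lemma~\ref{lemma:rescal} and Lemma~\ref{hninv} are designed to make legitimate.
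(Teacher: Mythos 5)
Your identification of the limiting map is correct (it is exactly Proposition~\ref{remark:skeletonflow}: the constant field $\chi^v$ carries $\Pi_{\gamma,\gamma'}$ onto its image by $L_{\gamma,\gamma'}$ with hitting time $-y_{\sigma'}/\chi^v_{\sigma'}$), but the step where you upgrade from the constant field to the actual rescaled flow has a genuine gap. Lemma~\ref{lemma:rescal} gives $C^k$ convergence of $\tilde{X}_v^\varepsilon$ to $\chi^v$ only on $\Pi_v(\varepsilon^r)$, i.e.\ on the part of the sector where \emph{every} coordinate is at least $\varepsilon^r$. It does not hold --- and in fact genuinely fails --- on the two layers $\{0\leq y_\sigma\leq \varepsilon^r\}$ adjacent to the faces $\Pi_\gamma$ and $\Pi_{\gamma'}$ on which the cross-sections sit: a point of $\Pi_\gamma$ has $y_\sigma=0$ for the facet $\sigma$ opposite to $\gamma$ at $v$, so its preimage under $\Psi^X_{v,\varepsilon}$ has $x_\sigma=1$, a point at unit distance from $v$ near the interior of the edge $\gamma$, where $H_\sigma$ is in general not close to $H_\sigma(v)$. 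Hence the rescaled flow does not converge to the translation flow along the initial and final portions of the orbits you use, and your steps (i)--(iii), which invoke ``the $C^k$ flow convergence'' along orbits that start on $\Pi_\gamma$ and end on $\Pi_{\gamma'}$, are not justified as stated. (Transversality at the exit section is still true, but it comes from regularity of $X$ --- no singularities in the interior of $\gamma'$ --- not from $C^0$ closeness of $\tilde{X}_v^\varepsilon$ to $\chi^v$.)

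What is missing is precisely the main technical content of the paper's proof: split the transit into three Poincar\'e maps, from $\Pi_\gamma(\varepsilon^r)$ to the facet $\partial_\gamma\Pi_v(\varepsilon^r)$ of $\Pi_v(\varepsilon^r)$ parallel to $\Pi_\gamma$, then across $\Pi_v(\varepsilon^r)$ to the facet $\partial_{\gamma'}\Pi_v(\varepsilon^r)$, and finally out to $\Pi_{\gamma'}(\varepsilon^r)$. Your constant-field argument correctly handles only the middle piece, which converges in $C^k$ to $(L_{\gamma,\gamma'})_{|\Pi_{\gamma,\gamma'}}$. The entry and exit pieces require a separate argument showing they converge to the \emph{identity}: integrating the transverse component of \eqref{X in v-coords} gives $\int_0^{\tau^\varepsilon(x)}-H_1(\varphi^s(x))\,ds=\varepsilon^{r-2}$, whence $\tau^\varepsilon(x)\leq C\,\varepsilon^{r-2}$ and the displacement of the remaining rescaled coordinates is $O(\varepsilon^2\tau^\varepsilon)=O(\varepsilon^r)\to 0$; the $C^k$ statement then follows by differentiating this integral identity implicitly to bound the derivatives of $\tau^\varepsilon$, together with the flow-derivative bounds, exactly as in Lemma~\ref{pgamma}. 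In other words, in the boundary layers the correct mechanism is not that the field is close to $\chi^v$ (it is not), but that the rescaled time spent there is $O(\varepsilon^r)$. With these two identity-limits inserted on either side of your middle passage, your argument closes and coincides with the paper's proof.
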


\begin{proof}
Setting $F_{v}=\{\sigma_0,\sigma_1,\ldots, \sigma_{d-1}\}$
consider the system of coordinates  $(y_0,y_1,\ldots, y_{d-1})$ on $\Pi_v$ where each $y_j$ abbreviates
$y_{\sigma_j}$.
Assume the facets in $F_v$ were ordered in a way that
$$\Pi_{\gamma}=\{y\in \Pi_{v}: y_0=0\}\quad\text{and}\quad \Pi_{\gamma'}=\{y\in \Pi_{v}: y_{d-1}=0\}.$$

Let 
\begin{align*}
&\partial_{\gamma}\Pi_{v}(\epsilon^r):=\{y\in\Pi_{v}(\epsilon^r) : y_0=\epsilon^r\}\\
&\partial_{\gamma'}\Pi_{v}(\epsilon^r):=\{y\in\Pi_{v}(\epsilon^r) : y_{d-1}=\epsilon^r\}
\end{align*}
 be the boundary facets of the sector $\Pi_{v}(\epsilon^r)$ defined in~\eqref{pivepsilon}, respectively, parallel to $\Pi_{\gamma}$ and $\Pi_{\gamma'}$. By Lemma~\ref{lemma:rescal},
the Poincar\'{e} map of the vector field
$(\Psi_{v,\epsilon}^X)_\ast X=\epsilon^2 \tilde{X}_v^\epsilon$ from
$\partial_{\gamma}\Pi_v(\epsilon^r)$ to
$\partial_{\gamma'}\Pi_v(\epsilon^r)$ converges in the
$C^k$ topology to $(L_{\gamma,\gamma^\prime})_{|_{\Pi_{\gamma,\gamma'}}}$. We are left to prove that, see Figure~\ref{localpoincareflow}, the Poincar\'{e} maps of this vector field from
$$\Pi_{\gamma}(\epsilon^r)=\{y\in\Pi_{v}: y_0=0\; \text{and}\;  y_1,\ldots, y_{d-1}\leq\epsilon^r\} $$ to $\partial_{\gamma}\Pi_v(\epsilon^r)$,   and 
from   $\partial_{\gamma'}\Pi_v(\epsilon^r)$ to 
$$\Pi_{\gamma'}(\epsilon^r)=\{y\in\Pi_{v}:   y_{d-1}=0\; \text{and}\; y_0,\ldots, y_{d-2} \leq\epsilon^r  \} $$ 
converge to the identity maps in the $C^k$ topology as $\epsilon\to 0^+$.

\begin{figure}[h]
\begin{center}
	\includegraphics[width=9cm]{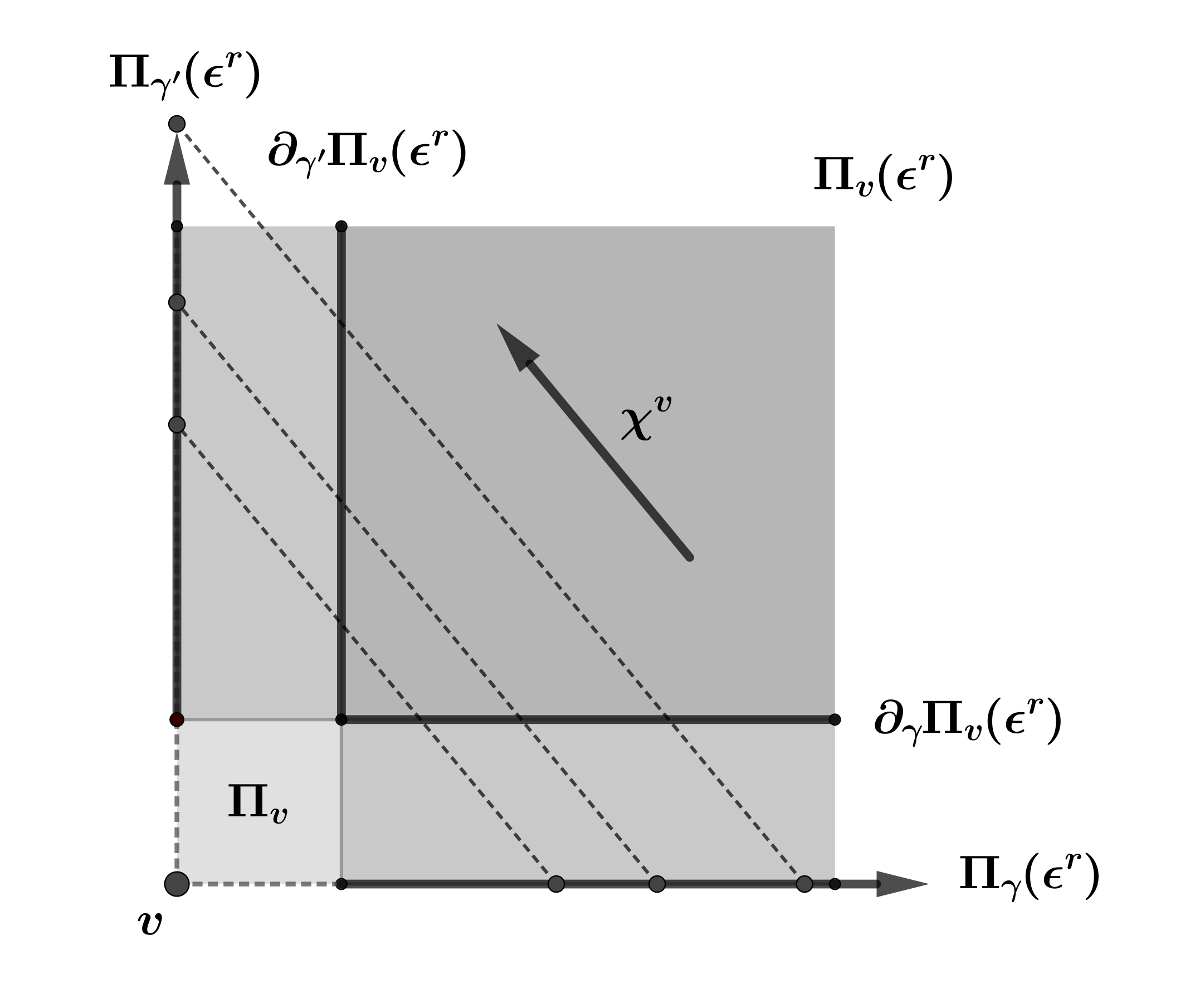}
\end{center}
\caption{The local map $L_{\gamma,\gamma'}$ factors as a composition of three projections.}
\label{localpoincareflow}
\end{figure}

 The two convergences are analogous and we only prove the first one. The argument is similar to that of Lemma~\ref{pgamma}, but  instead of~\eqref{flow box} we consider the equations  of $X$
\begin{equation}
\label{X in v-coords}
 \dot{x}_l =  x_l^{\nu_l}\, H_l(x)  \quad 0\leq l\leq d-1  
\end{equation}
represented in the system of coordinates $(x_0,x_1,\ldots, x_{d-1})$ on $N_v$.

Notice that $(\Psi_{v,\epsilon}^X)^{-1} \Pi_{\gamma}(\epsilon^r)\subset \Sigma_{v,\gamma}$ is defined by the 
conditions 
$$
x_0=1 \;\text{ and } \; 0< x_l <h^{-1}_{\nu_l}\left(\frac{1}{\epsilon^{2-r}}\right), \quad 1\leq l \leq d-1 .
$$

Likewise, $\Sigma^\epsilon_{v,\gamma}:=(\Psi_{v,\epsilon}^X)^{-1} \partial_{\gamma}\Pi_{v}(\epsilon^r)$ is defined by
$x_0=h^{-1}_{\nu_0}(\frac{1}{\epsilon^{2-r}})$ 
and the same conditions above in the remaining coordinates.
Let  $\tau^\epsilon(x)$  denote the time that the orbit starting at $x\in\Sigma_{v,\gamma}$ takes to hit the cross-section $\Sigma_{v,\gamma}^\epsilon$.  Integrating  the first component of \eqref{X in v-coords}, we have
\begin{equation}
\label{integralformula0}
h_{\nu_0}\left(h_{\nu_0}^{-1}\left(\frac{1}{\epsilon^{2-r}}\right)\right)  
 -h_{\nu_0}(1)=-\int_0^{\tau^\epsilon(x)}H_0(\varphi^s(x))d s.
\end{equation} 
Since $-H_0(v)=\chi^{v}_{\sigma_0}>0$ there exists a neighborhood $U_{v}$ of $v$ where $-H_0$ takes positive values. We can take a constant $C>0$ and shrink $U_{v}$ so that $-H_0\geq \frac{1}{C}$
and $\norm{D^r H_0}\leq C$ for all $1\leq r\leq k$   on $U_{v}$.
Without loss of generality we may assume that $\Sigma_{v,\gamma}$ is contained in $U_{v}$. From~\eqref{integralformula0} we have 
\begin{equation}\label{integralformula1}
\int_0^{\tau^\epsilon(x)}-H_0(\varphi^s(x))\, d s=\frac{1}{\epsilon^{2-r}} \, ,
\end{equation}
which implies  
$$
\tau^\epsilon(x)\leq \frac{C}{\epsilon^{2-r}}\, .
$$
Differentiating both sides of \eqref{integralformula1} with respect to $x_l$, for $1\leq l \leq d-1$, we obtain 
\begin{equation}
\label{integralformula2}
 H_0(\varphi^{\tau^\epsilon(x)}(x)) \frac{\partial \tau^\epsilon(x)}{\partial x_l} + \int_0^{\tau^\epsilon(x)}  \nabla H_0  (\varphi^s(x)) \cdot \frac{\partial \varphi^s(x)}{\partial x_l}\, d s=0 .
\end{equation} 
Similar formulas can be driven for higher order
derivatives of $\tau^\epsilon$.

Arguing as in Lemma~\ref{pgamma},
we can bound the derivatives of the flow $\varphi^s(x)$.
Since the  derivatives of $H_0$ are also bounded,
we infer from~\eqref{integralformula2}, and its higher order analogues, that the function
$\tau^\epsilon$ has bounded derivatives up to order $k$.
 Finally, repeating the argument in the proof of Lemma~\ref{pgamma}, we conclude that the Poincar\'{e} map from $\Pi_{\gamma}(\epsilon^r)$ to $\partial_{\gamma}\Pi_{v}(\epsilon^r)$ converges to identity in the $C^k$ topology as $\epsilon\to 0^+$.   
\end{proof}

\begin{defn}
Given a heteroclinic path  $\xi=(\gamma_0,\gamma_1, \ldots,\gamma_m)$, the composition
$$P_\xi:=( P _{\gamma_m}\circ P_{\gamma_{m-1},\gamma_m}) \circ\ldots\circ (P_{\gamma_1}\circ P_{\gamma_0,\gamma_1})$$
is referred to as the {\em Poincar\'e map} of the vector field $X$ along $\xi$.
The   domain of this composition is denoted by $\UU_\xi$.
\end{defn}

Lemmas~\ref{pgamma}  and Lemma~\ref{vertexpoincare} imply that  given a  heteroclinic path  $\xi$, the  asymptotic behavior of the  Poincar\'e map $P_\xi$  along $\xi$
is given by the corresponding Poincar\'e map $\pi_\xi$ of the skeleton vector field $\chi$.
More precisely, given $k\in\Nn$ and  taking $r=r(k,X)$ according to  Lemma~\ref{lemma:rescal}  we have
\begin{proposition}\label{pathpoincare}
Given a heteroclinic path  $\xi=(\gamma_0,\ldots,\gamma_m)$ with $v_0=s(\gamma_0)$ and $v_m=s(\gamma_m)$,
let $\UU_\xi^\epsilon$  be the domain of the composite map $F_\xi^\epsilon:=\Psi_{v_m,\epsilon}^X\circ P_\xi \circ(\Psi_{v_0,\epsilon}^X)^{-1}$ from $\Pi_{\gamma_0}(\epsilon^r)$
into $\Pi_{\gamma_m}(\epsilon^r)$.
Then
\begin{align*}
\lim_{\epsilon\to0^+}\left(F_{\xi}^\epsilon\right)_{|_{\UU_{\xi}^\epsilon}}=\pi_\xi 
\end{align*}
in the $C^k$ topology, in the sense of Definition~\ref{remark:convergence}.
\end{proposition}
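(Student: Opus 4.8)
The plan is to deduce Proposition~\ref{pathpoincare} from Lemmas~\ref{pgamma} and~\ref{vertexpoincare} by a bootstrap on the length $m$ of the path, using the composition structure of $P_\xi$ together with the chain-rule bookkeeping of $C^k$-convergence in the sense of Definition~\ref{remark:convergence}. First I would record the elementary algebraic fact that conjugation is compatible with composition: if $F^\varepsilon = \Psi_{w,\varepsilon}^X \circ G \circ (\Psi_{v,\varepsilon}^X)^{-1}$ and $H^\varepsilon = \Psi_{z,\varepsilon}^X \circ K \circ (\Psi_{w,\varepsilon}^X)^{-1}$, then $H^\varepsilon \circ F^\varepsilon = \Psi_{z,\varepsilon}^X \circ (K\circ G)\circ (\Psi_{v,\varepsilon}^X)^{-1}$ on the appropriate composition domain, because the middle factors $(\Psi_{w,\varepsilon}^X)^{-1}\circ \Psi_{w,\varepsilon}^X$ cancel. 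Applying this to the definition $P_\xi = (P_{\gamma_m}\circ P_{\gamma_{m-1},\gamma_m})\circ \dots \circ (P_{\gamma_1}\circ P_{\gamma_0,\gamma_1})$, the conjugated map $F_\xi^\varepsilon$ factors as an alternating composition of the maps $F_{\gamma_j}^\varepsilon$ (the flowing-edge pieces, conjugated as in Lemma~\ref{pgamma}) and $F_{\gamma_{j-1},\gamma_j}^\varepsilon$ (the vertex pieces, conjugated as in Lemma~\ref{vertexpoincare}), all in the common chart of the dual cone.

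Next I would invoke the two lemmas: for any fixed integer $k\geq 1$ there is $r=r(k,X)>0$ such that, in the $C^k$ topology, $F_{\gamma_j}^\varepsilon \to \id_{\Pi_{\gamma_j}}$ and $F_{\gamma_{j-1},\gamma_j}^\varepsilon \to (L_{\gamma_{j-1},\gamma_j})_{|_{\Pi_{\gamma_{j-1},\gamma_j}}}$ on domains containing $\Pi_{\gamma_j}(\varepsilon^r)$, resp.\ $\Pi_{\gamma_{j-1}}(\varepsilon^r)$. The content of the proposition is then that $C^k$-convergence is stable under composition. I would prove the following composition principle: if $F_\varepsilon \to F$ and $H_\varepsilon \to H$ in the $C^k$ topology of Definition~\ref{remark:convergence}, with $F(\UU_F)\subseteq \UU_H$, and if moreover the derivatives $D^i F_\varepsilon$ are uniformly bounded on compact subsets of $\UU_F$ for $0\le i\le k$ (which holds here since the limits are either affine or the identity, and the sup-norm convergence forces such a bound for small $\varepsilon$), then $H_\varepsilon\circ F_\varepsilon \to H\circ H$ wait — $H\circ F$ in the $C^k$ topology on the composition domain. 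The verification is the multivariate Faà di Bruno / chain rule: $D^i(H_\varepsilon\circ F_\varepsilon)$ is a universal polynomial in the $D^j H_\varepsilon\circ F_\varepsilon$ and $D^l F_\varepsilon$ with $j,l\le i$; replacing $F_\varepsilon$ by $F$ and $H_\varepsilon$ by $H$ one term at a time, each replacement changes the expression by something that tends to $0$ uniformly on compacts (using the uniform bounds to control the "frozen" factors, and uniform continuity of the fixed limits $D^jH$ to pass the argument $F_\varepsilon\to F$ inside). The domain-convergence clause propagates because a compact $K\subseteq \UU_\xi$ (the composition domain of the limit $\pi_\xi$) has compact image under each successive affine limit map, hence is eventually contained in the domain $\UU_j^\varepsilon$ of each successive conjugated piece — here one also uses that $\Pi_{\gamma_j}(\varepsilon^r)$ exhausts $\inter(\Pi_{\gamma_j})$ as $\varepsilon\to0^+$, as noted after~\eqref{pivepsilon1}. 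Iterating this composition principle $2m$ times, and noting that the limit of the alternating composition of identities and the $L_{\gamma_{j-1},\gamma_j}$'s is exactly $L_{\gamma_{m-1},\gamma_m}\circ\dots\circ L_{\gamma_0,\gamma_1}=\pi_\xi$ with domain $\Pi_\xi$ (Definition~\ref{poincaremap}), gives $F_\xi^\varepsilon\to \pi_\xi$ in $C^k$. Since $k$ was arbitrary this is $C^\infty$-convergence; note that $r$ depends on $k$, but for each $k$ one simply uses the corresponding $r(k,X)$, which is harmless because $\Pi_{\gamma_0}(\varepsilon)\subseteq\Pi_{\gamma_0}(\varepsilon^r)$ is false in general — rather, one should phrase the lemmas' domains as $\Pi_{\gamma_0}(\varepsilon^{r})$ and note any $C^k$-convergence statement is about exhaustion by compacts, for which the precise exponent is immaterial.

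The main obstacle I anticipate is bookkeeping the domains carefully: the conjugated partial maps $F_{\gamma_j}^\varepsilon$ and $F_{\gamma_{j-1},\gamma_j}^\varepsilon$ are only defined on the image of the previous step's Poincaré domain, and one must check that after conjugation these nested domains still exhaust the composition domain $\UU_\xi=\Pi_\xi$ of the limit, uniformly enough that a given compact $K\subseteq\Pi_\xi$ lies inside $\UU_\xi^\varepsilon$ for all small $\varepsilon$. This requires knowing that the image of a compact subset of $\inter(\Pi_{\gamma_{j-1}})$ under $L_{\gamma_{j-1},\gamma_j}$ (when it lies in $\Pi_{\gamma_{j-1},\gamma_j}$) is a compact subset of $\inter(\Pi_{\gamma_j})$, which is Remark~\ref{L(-chi):inverse} together with the explicit form of $L_{\gamma,\gamma'}$, and then running the transversality/first-return arguments of Definitions~\ref{Pgamma} and~\ref{Pgammagamma'} to see the corresponding first-return maps $P_\xi$ are indeed defined there for small $\varepsilon$. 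The rest — the chain-rule estimate — is routine once the composition principle is stated; I would state it as a short self-contained sub-lemma so that the $2m$-fold iteration is a one-line induction.
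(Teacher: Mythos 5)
Your proposal follows exactly the route the paper takes: its proof of Proposition~\ref{pathpoincare} is the one-line statement that it ``follows immediately from Lemmas~\ref{pgamma} and~\ref{vertexpoincare},'' and your write-up simply makes explicit the cancellation of the intermediate charts, the stability of $C^k$-convergence under composition, and the domain bookkeeping that the authors leave implicit. This is correct and essentially the same argument, just spelled out in detail.
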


\begin{proof}
Follows immediately from Lemmas~\ref{pgamma} and~\ref{vertexpoincare}.
\end{proof}

As mentioned in the introduction, we are interested in studying the flow of $X$ along heteroclinic cycles on the polytope's vertex-edge network. 
To encode the semi-global dynamics of the flow $\varphi_X^t$ along the cycles, we use Poincar\'e return maps to a system of cross-sections $\Sigma_\gamma$, see Remark~\ref{sigma-gamma},  placed at the edges of a structural set, see Definition~\ref{structural:set}. Any orbit of the flow  $\varphi_X^t$ that shadows some heteroclinic cycle must intersect these cross-sections  in a recurrent way.

\begin{defn}
Let $X\in\fX(\Gamma^ d)$ be a vector field with a structural set  $S\subset E$. We define the $S$-Poincar\'e map $P _S:\UU_{S} \subset \Sigma_S\to \Sigma_S$ setting  $\Sigma_S:= \cup_{\gamma\in S} \Sigma_\gamma$,
 $\UU_{S}:= \cup_{\xi\in \Bscr_S(\chi)} \UU_{\xi}$
 and $P_S(p):=P_\xi(p)$ for all $p\in \UU_{\xi}$.
Note that the domains $\UU_{\xi}$ and $\UU_{\xi'}$
are disjoint for $\xi\neq \xi'$ in $\Bscr_S(\chi)$.
\end{defn}

%
%

By construction the suspension of the $S$-Poincar\'e map
$P_{S}:D_{S} \subset \Sigma_S\to \Sigma_S$ embeds (up to a time re-parametrization) in the flow of the vector field $X$.
In this sense the dynamics of the map $P_{S}$ encapsulates the qualitative behavior
of the flow $\varphi_X^t$ of $X$ along the edges of $\Gamma^d$.

\begin{theorem}\label{asymp:main:theorem}
Let $X\in \fX(\Gamma^d)$ be a regular vector field 
 with skeleton vector field $\chi$ and a structural set $S\subset E_\chi$. Then  
$$ \lim_{\epsilon\to 0^ +}  \Psi_\epsilon \circ \Poin{X}{S}\circ
(\Psi_\epsilon)^{-1} = \skPoin{\chi}{S}  $$
 in the $C^\infty$ topology, in the sense of Definition~\ref{remark:convergence}.
\end{theorem}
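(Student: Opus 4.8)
The plan is to obtain Theorem~\ref{asymp:main:theorem} as a routine consequence of Proposition~\ref{pathpoincare}, all of the genuine analysis having been carried out already in Lemmas~\ref{pgamma} and~\ref{vertexpoincare}. Recall (Definitions~\ref{branches} and~\ref{spoincare}, together with the definition of $P_S$) that over the finite set $\Bscr_S(\chi)$ of $S$-branches one has $D_S=\bigcup_{\xi\in\Bscr_S(\chi)}D_\xi$ with $P_S|_{D_\xi}=P_\xi$, and likewise $\Pi_S=\bigcup_{\xi\in\Bscr_S(\chi)}\Pi_\xi$ with $\pi_S|_{\Pi_\xi}=\pi_\xi$; moreover the pieces $D_\xi$ (resp.\ $\Pi_\xi$) are pairwise disjoint. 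First I would note that $\Bscr_S(\chi)$ is indeed finite: since $S$ is structural, no $\chi$-cycle avoids $S$, so the interior of a branch cannot repeat an edge, whence branch length is bounded by $|E_\chi|+1$. Next, by Remark~\ref{remark:rescal}, for each $\gamma\in S$ with endpoint $v$ the global rescaling $\Psi_\varepsilon=\Psi_\varepsilon^X$, restricted to the cross-section $\Sigma_\gamma\subset N_v$, coincides --- under the canonical identification $\Pi_v\simeq\Rr^d$ that discards the coordinates vanishing on $\Pi_v$ --- with the vertex rescaling $\Psi_{v,\varepsilon}^X$, and carries $\Sigma_\gamma$ onto $\Pi_\gamma$. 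Hence, restricted to the piece $\Psi_\varepsilon(D_\xi)$, the conjugated return map $\Psi_\varepsilon\circ P_S\circ(\Psi_\varepsilon)^{-1}$ is exactly the map $F_\xi^\varepsilon=\Psi_{v_m,\varepsilon}^X\circ P_\xi\circ(\Psi_{v_0,\varepsilon}^X)^{-1}$ of Proposition~\ref{pathpoincare}, with $v_0=s(\gamma_0)$, $v_m=s(\gamma_m)$, while the limit $\pi_S$ restricted to $\Pi_\xi$ is $\pi_\xi$.

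The second step is to glue the branchwise convergences into convergence of $P_S$ in the sense of Definition~\ref{remark:convergence}. Let $K\subseteq\Pi_S$ be compact. Each $\Pi_\xi$ is carved out of $\inter(\Pi_{\gamma_0})$ by finitely many strict linear inequalities (see Definition~\ref{poincaremap}), hence is relatively open in $\Pi_S$; being also the complement in $\Pi_S$ of the union of the remaining (open) pieces, $\Pi_\xi$ is clopen in $\Pi_S$. Therefore $K_\xi:=K\cap\Pi_\xi$ is compact, $K=\bigcup_\xi K_\xi$ is a finite disjoint union, and only finitely many $K_\xi$ are nonempty. For each such $\xi$, Proposition~\ref{pathpoincare} (whose conclusion is in the $C^\infty$ topology, i.e.\ $C^k$ for every $k$) gives, for each $k\ge 1$: $K_\xi\subseteq\UU_\xi^\varepsilon$ for all small enough $\varepsilon$, together with $F_\xi^\varepsilon\to\pi_\xi$ uniformly with all derivatives up to order $k$ on $K_\xi$. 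Since $K$ meets only finitely many pieces, for each fixed $k$ we may take the minimum over those $\xi$ of the thresholds on $\varepsilon$ and the maximum of the finitely many $C^k$ errors: the former shows $K$ lies in the domain of $\Psi_\varepsilon\circ P_S\circ(\Psi_\varepsilon)^{-1}$ for small $\varepsilon$, and the latter --- a maximum of finitely many quantities each tending to $0$ --- shows the $C^k$ error over $K$ tends to $0$. As $k$ was arbitrary, this is precisely $C^\infty$ convergence, which is the claim.

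I do not expect a genuine obstacle: the only points needing care are (i) the finiteness of $\Bscr_S(\chi)$, essential so that ``maximum over branches'' still produces a uniform limit, and (ii) matching the single global rescaling $\Psi_\varepsilon^X$ on the cross-sections with the family of vertex rescalings $\Psi_{v,\varepsilon}^X$ appearing in Proposition~\ref{pathpoincare}, which is exactly the content of Remark~\ref{remark:rescal}. The fact that the domains $\UU_\xi^\varepsilon$ move with $\varepsilon$ is already built into the domain-convergence clause of Definition~\ref{remark:convergence}, so it requires no separate treatment. All the hard estimates --- controlling the transition time $\tau$ and the integrals $\int H_j$, together with the growth of $h_n^{-1}(y/\varepsilon^2)$ from Lemma~\ref{hninv} --- are already packaged inside Lemmas~\ref{pgamma} and~\ref{vertexpoincare}, hence inside Proposition~\ref{pathpoincare}, so the present argument is pure bookkeeping.
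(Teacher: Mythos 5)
Your proposal is correct and follows the paper's own route: the paper derives Theorem~\ref{asymp:main:theorem} directly as an "easy corollary" of Proposition~\ref{pathpoincare}, exactly as you do, with the branchwise maps $F_\xi^\varepsilon$ glued over the finitely many $S$-branches. Your extra bookkeeping (finiteness of $\Bscr_S(\chi)$, the identification of $\Psi_\varepsilon$ with the vertex rescalings via Remark~\ref{remark:rescal}, and the clopen decomposition of a compact $K\subseteq\Pi_S$) simply makes explicit what the paper leaves implicit.
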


\begin{proof}
Follows from Proposition~\ref{pathpoincare}.
\end{proof}


\section{Asymptotic integrals of motion}
\label{AFI}

In this section we introduce a probe space $\HH(\Gamma^d)$ for integrals of motion
of  the vector fields in $\fX(\Gamma^d)$. This space consists of analytic functions in $\inter(\Gamma^d)$ with poles at the polytope's facets.
We show that a function $h\in \HH(\Gamma^d)$ rescales
to a piecewise linear function $\eta:\CC^\ast(\Gamma^d)\to\Rr$ on the dual cone.
Moreover, if $h\in \HH(\Gamma^d)$ is an integral of motion of a
vector field $X\in\fX(\Gamma^d)$ then $\eta$ is also an integral of motion for the
piecewise linear flow of the skeleton vector field $\chi$ of $X$.

Recalling that $\{f_\sigma\}_{\sigma\in F}$ is a defining family of the polytope $\Gamma^d$, let
$\FF = \{ h_n\circ f_\sigma,:\; n\geq 1, \sigma\in F\,\}$ 
where   $h_n$ was introduced in~\eqref{defn:hn}, and  define $\HH(\Gamma^d)$ to be the linear span of $\A(\Gamma^d)\cup\FF$.
Since functions in the set $\FF$
are linearly independent and $\HH(\Gamma^d)=\A(\Gamma^d)\oplus \langle \FF \rangle$,
each $h\in\HH$ can be uniquely decomposed as
\begin{equation}
\label{funcHH}
h= g +  \;
\sum_{n=1}^\infty \;\sum_{\sigma\in F}  \mu_{n\,\sigma }\, ( h_n\circ f_\sigma )\;, 
\end{equation}
with $g\in\A(\Gamma^d)$,
where only a finite number of coefficients $\mu_{n\,\sigma}$
are nonzero.
Note that the differential $dh$ is given by the
expression:
\begin{equation}
\label{dhh}
dh= dg\;- \; \;
\sum_{n=1}^\infty \;\sum_{\sigma\in F} 
\mu_{n\,\sigma} \,\frac{d f_\sigma }{ (f_\sigma)^{n}}\;. 
\end{equation}

We define the {\em order of $h$ at $\sigma$} to be  the number 
$$ \nu^h(\sigma)=\max\{n\in\Nn\colon  \mu_{n\,\sigma}\neq 0\} ,$$
with  $\nu^h(\sigma)=0$ if all $\mu_{n\,\sigma}=0$.
The map $\nu^h:F\to \Nn$ is referred to as the {\em order function} of $h$.

\begin{defn}\label{def_skel_ham}
The character of  $h$ at $\sigma$
is the coefficient $\eta^h(\sigma)=\mu_{n\,\sigma}$ cor\-res\-pon\-ding
to the term with largest order $n=\nu^h(\sigma)$. The character is undefined
if $\nu^h(\sigma)=0$. We say that the function
$$\eta^h:\CC^\ast(\Gamma^d)\to \Rr\;,\quad \eta^h(y):=\sum_{\sigma\in F}\eta^h(\sigma)\,y_\sigma\;$$
is the  skeleton of $h$.
\end{defn} 

\begin{proposition}
	\label{la_inv}
Given   a regular vector field $X\in\fX(\Gamma^d)$ and a  function $h\in\HH(\Gamma^d)$ with the same order function $\nu:F\to \Nn$,
let $\chi$ be the skeleton of $X$ and 	$\eta:\CC^\ast(\Gamma^d)\to\Rr$
be the skeleton of $h$. Then
\begin{enumerate}
	\item 
	$\displaystyle \eta =\lim_{\epsilon \rightarrow 0^+} \epsilon^2  
	h\circ (\Psi^X_{v,\epsilon})^{-1}$  over $\inter(\Pi_v)$ for any vertex $v$, with convergence in the $C^\infty$ topology.
	\item 
	$\displaystyle d\eta = \lim_{\epsilon \rightarrow 0^+} \epsilon^2
	\left[ (\Psi^X_{v,\epsilon})^{-1} \right]^\ast \left( d h \right)$ over  $\inter(\Pi_v)$ for any vertex $v$,	with convergence in the $C^\infty$ topology. 
	\item If $h$ is invariant under the flow of $X$, \ie $dh(X)\equiv 0$,
	then $\eta$ is invariant under the skeleton  flow of $\chi$, \ie $d\eta(\chi)\equiv 0$.
\end{enumerate} 
\end{proposition}

\begin{remark}
Since $\nu$ is the order function of $X$, $\nu(\sigma)\geq 1$ for every facet $\sigma\in F$. Hence, because $\nu$ is also the order function of $h$ the skeleton character $\eta(\sigma)=\eta^h(\sigma)$ is well defined for all facets $\sigma\in F$.
\end{remark}

\begin{proof}
	Consider the system of local coordinates
	$x=(x_\sigma)_{\sigma\in F_v} =( f_\sigma(q))_{\sigma\in F_v}$ on the neighborhood $N_v$. According to decomposition~(\ref{funcHH}) we can write
	$$h(x)= g(x) +  \;
	\sum_{\sigma\in F_v} \; \sum_{n=1}^{\nu(\sigma)} \;  \mu_{n\,\sigma }\, h_n (x_\sigma) \;,$$ 
	where $g(x)$ is analytic in $N_v$.
	Therefore, by item (2) of Lemma~\ref{hninv},
	$$\epsilon^2\, h\circ (\Psi^X_\epsilon)^{-1}(y)= 
	\epsilon^2\, g\circ (\Psi^X_\epsilon)^{-1}(y) +\epsilon^2\, 
	\sum_{\sigma\in F_v} \sum_{n=1}^{\nu(\sigma)}
	\mu_{n\,\sigma} \, \left( h_n\circ h_{\nu(\sigma)}^{-1}\right)\left(\frac{y_\sigma}{\epsilon^2}\right)
	$$
	converges in the $C^\infty$ topology to
	$\sum_{\sigma\in F_v} \mu_{\nu(\sigma)\,\sigma}\, y_\sigma =\eta(y)$ over the sector $\inter(\Pi_v)$. This proves (1) and also implies (2).
	
	For item (3) we use the following abstract result. 
	Given a smooth function $h$ and a smooth vector field $X$ on a manifold $M$,
	and given a diffeomorphism $\Psi:M\to N$,
	$$ dh(X)\circ\Psi^{-1} = d(h\circ\Psi^{-1})[ \Psi_\ast X) ] . $$
	
	Since we are assuming that $dh(X)\equiv 0$, by item (2) and Lemma~\ref{lemma:rescal}
	\begin{align*}
	0 &= dh (X)\circ \Psi_{v,\epsilon}^X
	= d(h\circ (\Psi_{v,\epsilon}^X)^{-1})[(\Psi_{v,\epsilon}^X)_\ast X]\\
	&= d(\epsilon^2 h\circ (\Psi_{v,\epsilon}^X)^{-1})[\epsilon^{-2} (\Psi_{v,\epsilon}^X)_\ast X]\\
		&= d(\epsilon^2 h\circ (\Psi_{v,\epsilon}^X)^{-1}) [  \, \tilde X_v^\epsilon\,  ]
		\longrightarrow d \eta(\chi^v) 
	\end{align*}
	as $\epsilon\to 0$. This proves that the piecewise linear function $\eta$ is invariant under the 
	 flow of the skeleton vector field $\chi$.
\end{proof}

A continuous function $h:M\to \Rr$ is said to be {\em proper}  if for all real numbers $a<b$ the pre-image
$f^{-1}[a,b]\subset M $ is compact.

\begin{proposition}
\label{prop proper}
If $h\in\HH(\Gamma^d)$ is proper in $\inter(\Gamma^d)$
with order function $\nu\geq 1$ then its skeleton
$\eta:\CC^\ast(\Gamma^d)\to \Rr$ is also a proper function.
\end{proposition}

\begin{proof}
Fix a vertex $v$ and let $F_v=\{\sigma_1,\ldots, \sigma_d\}$.
Take the usual system of affine coordinates $(x_1,\ldots, x_d)$ on the neighborhood $N_v$ where $x_j=f_{\sigma_j}$. In these coordinates $h$ can be written as
$$ h(x)=g(x) + \sum_{j=1}^d \frac{p_j(x)}{x^{\nu_j}}$$
where $g(x)$ is analytic in $N_v$, $\nu_j$ is
the order of $h$ at $\sigma_j$ and each $p_j(x)$ is a polynomial function such that $\mu_j=p_j(0)\neq 0$
is the character of $h$ at $\sigma_j$. On the sector $\Pi_v$   the skeleton $\eta$ of $h$ is given  by 
$\eta(y_1,\ldots, y_d)= \sum_{j=1}^d \mu_j  y_j$.
Since $h$ is proper, the level set $h^{-1}(0)$ is compact,
which implies that $h$ does not change sign in a small neighborhood of $v$. 
Hence we can assume that $h>0$ on $N_v$.
Because
 $h(x)$ is equal to $\sum_{j=1}^d \frac{\mu_j}{x^{\nu_j}}$ 
 pus higher order terms as   $x\to v$, all coefficients $\mu_j$ must be positive. Therefore
 $$ \eta^{-1}([a,b])\cap \Pi_v \subset \{ (y_1,\ldots, y_d): y_j\geq 0,\;
 \sum_{j=1}^d \mu_j y_j\leq b \}$$
 is a compact set. Because $v$ is arbitrary,
   $\eta^{-1}([a,b])$ is also compact.
\end{proof}

\begin{remark}
	\label{rmk conservative 1}
	Polymatrix replicator systems form a large class of models
	in EGT,  that includes  replicator and bimatrix replicator systems,   falling within the scope of this work. 
	The phase space of polymatrix replicators are prisms (products of simplexes),  basic examples of  simple polytopes.	In~\cite{AD2014} the first two authors have characterized the class of Hamiltonian polymatrix replicator systems  w.r.t. a class of  algebraic Poisson structures. All these models illustrate the conclusions of propositions~\ref{la_inv} and~\ref{prop proper}.
\end{remark}

\begin{remark}
	\label{rmk conservative 2}
	If $X$ is a Hamiltonian polymatrix replicator vector field
	w.r.t. some algebraic Poisson structure in the interior of a prism $\Gamma^d$, which has  a proper  Hamiltonian function $h$,  then its skeleton flow map is volume preserving on each level set of 	the skeleton  of $h$.
	This fact will not be proved here, see more  in Section~\ref{furtherwork}.
\end{remark}

\bigskip

Throughout the rest of this section we assume:
\begin{enumerate}
\item $X\in\fX(\Gamma^d)$ is a regular vector field, with skeleton $\chi$,
such that all vertexes are of saddle type and every edge is either neutral or a flowing edge;
\item $X$ has integrals of motion $h_1,\ldots, h_k\in\HH(\Gamma^d)$, all with the same order function as $X$;
\item    $\eta_1,\ldots, \eta_k:\CC^\ast(\Gamma^d)\to\Rr$ are respectively the skeletons of  $h_1,\ldots, h_k$
and the forms $d\eta_1,\ldots, d\eta_k$ are linearly independent on every sector $\Pi_v$.
\end{enumerate}

Consider the function $\eta:\CC^\ast(\Gamma^d)\to\Rr^k$
defined by
$$ \eta(y):=\left(\eta_1(y),\ldots, \eta_k(y) \right)\,. $$

Given a structural set $S$ of $\chi$ and $c\in\Rr^k$   we define
\begin{equation}
\label{Delta S c}
 \Delta_{S,c}:=\Pi_S\cap  \eta^{-1}(c).
\end{equation}
Given an edge $\gamma$ or a branch $\xi\in \Bscr_S(\chi)$
we also define
\begin{equation}
\label{Delta gama xi c}
\Delta_{\gamma,c}:=\Pi_\gamma\cap\eta^{-1}(c)\,,
\quad \Delta_{\xi,c}:=\Pi_\xi\cap\eta^{-1}(c)\,.
\end{equation}
Notice that
\begin{equation}
\label{Delta S c = union Delta xi c}
\Delta_{S,c} =\bigcup_{\xi\in\Bscr_S(\chi)} \Delta_{\xi,c}\,. 
\end{equation}

\begin{theorem}
\label{thm level dynamcics}
 Under assumptions (1)-(3), given a structural set $S$ of $\chi$,  the skeleton flow map
	$\pi_S:D_S\to \Pi_S$ induces a closed dynamical system 
	on every level set $\Delta_{S,c}$ with $c=(c_1,\ldots, c_k)\in\Rr^k$.
\end{theorem}

\begin{proof}
Follows from  propositions~\ref{partition} and~\ref{la_inv}.
\end{proof}

\begin{theorem}
\label{thm horse-shoes}
 Under assumptions (1)-(3), given a structural set $S$ of $\chi$, 
if $p\in \Delta_{S,c}$ is a hyperbolic periodic point
	of ${\pi_S}_{\vert \Delta_{S,c}}$, and $q$ is an associated transversal homoclinic point whose orbit has a compact closure contained in $\Delta_{S,c}$, then there exists a (compact) hyperbolic basic set contained in $\Delta_{S,c}$ for the map ${\pi_S}_{\vert \Delta_{S,c}}$.
	Moreover each level set	
	$$  L_\epsilon:= \Gamma^d\cap \bigcap_{j=1}^k \left\{ h_j = \frac{c_j}{\epsilon^2}\right\} ,$$
	with $\epsilon$ sufficiently small, contains a hyperbolic basic set for the $S$-Poincar\'e map ${P_S}_{\vert L_\epsilon\cap \UU_S}$, conjugated to the previous one.
\end{theorem}

\begin{proof}
Given $p\in \Delta_{S,c}$ and its associated transversal homoclinic point $q$ consider an open neighborhood $U$ of the $\pi_S$-orbits of $p$ and $q$
whose closure satisfies
$$\overline{U}\subset D_S=\bigcup_{\xi\in\Bscr_S(\chi)}\Pi_\xi .$$
Because $\Lambda_0:=\{\pi_S^j(p):j\in\Zz\}\cup \{\pi_S^j(q):j\in\Zz\}\subset U$
is a hyperbolic set, reducing the size of $U$, the maximal invariant set 
$\Lambda =\cap_{j\in\Zz} \pi_S^{-j}(\overline{U})$ is a hyperbolic basic set for ${\pi_S}_{\vert \Delta_{S,c} }$.

Consider now the system of cross-sections
$\Sigma_S:= \cup_{\gamma\in S} \Sigma_\gamma^-$ transversal to the flow of  $X$ and let $P_S$ denote the induced Poincar\'e map
on $\Sigma_S$. By Theorem~\ref{asymp:main:theorem}, the conjugated Poincar\'e map
$\tilde P_S^\epsilon:=\Psi_\epsilon \circ \Poin{X}{S}\circ
(\Psi_\epsilon)^{-1}$ on the (invariant)  level set
$$ \Pi_S \cap \Psi^X_\epsilon(L_\epsilon)=   \Pi_S\cap \bigcap_{j=1}^k \left\{ \epsilon^2\, h_j \circ(\Psi^X_\epsilon)^{-1} = c_j \right\} $$
can be seen as a small perturbation of the skeleton flow map ${\pi_S}_{ \vert \Delta_{S,c} }$. Notice that, according to Proposition~\ref{la_inv}
the level set $\Pi_S \cap \Psi^X_\epsilon(L_\epsilon)$ converges to
$\Delta_{S,c}$ as $\epsilon\to 0$. Thus, because hyperbolic basic sets are structurally stable, see\cite[Theorem 8.3]{Shub1987},  there exists a hyperbolic basic set
$\tilde{\Lambda}_\epsilon$ for the conjugated Poincar\'e map
$\tilde P_S^\epsilon$ on $\Pi_S \cap \Psi^X_\epsilon(L_\epsilon)$.
Finally by conjugacy $\Lambda_\epsilon:= (\Psi^X_\epsilon)^{-1}(\tilde \Lambda_\epsilon)\subset L_\epsilon$ is a hyperbolic basic set for the Poincar\'e map ${P_S}_{\vert\Sigma_S\cap L_\epsilon}$ of the flow of $X$.
\end{proof}

\section{ Procedure to analyze the dynamics}
\label{dyn-analysis}

In this section we briefly describe the computational steps
that through  Theorem~\ref{thm horse-shoes} lead to the detection of hyperbolic basic sets.

\bigskip

\textbf{Input data:} \; 
The polytope $\Gamma^d$ and the vector field
$X\in\fX(\Gamma^d)$.

\bigskip

\newtheorem{step}{Step}

\begin{step}
Compute the character $\chi$ of $X$ and draw its flowing-edge graph.  \emph{This step  involves computing some derivatives at the vertex singularities. 
It can be done  through a computer algebra system algorithm.}
\end{step}

\begin{step}
Find a structural set $S$ for $\chi$. \emph{The search can be done by inspection if the flowing-edge graph is simple, or else using an algorithm for that purpose.}
\end{step}

\begin{step}
Determine all $S$-branches of $\chi$. \emph{ Once the structural set is known, a simple algorithm
determines its branches.}
\end{step}

\begin{step}
\label{step 1st integrals}
Find the integrals of motion of $X$ in $\HH(\Gamma^d)$ and determine their skeletons.
\emph{For instance if $X$ is Hamiltonian with respect to some Poisson structure, join to the Hamiltonian function of $X$ all the Casimirs of its  Poisson structure.}
\end{step}

\begin{step}
Make explicit the skeleton flow map $\pi_S:\Pi_S\to\Pi_S$.
\emph{ Use an algorithm to compute for each branch $\xi\in \Bscr(\chi)$ 
the matrix $M_\xi$  as well as the inequalities defining the  domain $\Pi_\xi$. Then represent (computationally)  the flow map $\pi_S$ as a function defined by cases.}
\end{step}

\begin{step}
Compute some random orbits of $\pi_S$ 	and determine their i\-ti\-ne\-ra\-ries, \emph{
using the previous step  representation of the flow map $\pi_S$.}
\end{step}

\begin{step}
Pick a few heteroclinic cycles $\xi$ from the previous itineraries  and  compute the eigenvalues and eigenvectors of $M_\xi$.\emph{ Use an algorithm to compute a matrix's eigenvalues and eigenvectors. Every matrix $M_\xi$ is a projection of $\Rr^F$ onto a $(d-1)$-dimensional subspace and hence has exactly $\vert F\vert -d+1$ zero eigenvalues. 
If  $k$ integrals of motion were found   in Step~\ref{step 1st integrals}, the eigenspace of $M_\xi$ associated with eigenvalue $1$, $\Ker(M_\xi-I)$, must have at least dimension $k$.}
\end{step}

\begin{step}
Among the positive eigenvectors associated with eigenvalue $1$ of $M_\xi$ look for saddle type periodic  points $p=\pi_S^n(p)$.
\emph{ Any eigenvector $y\in \Ker(M_\xi-I)$ with non-negative entries belongs to the dual cone and is a prospective periodic point of $\pi_S$, but one still needs to verify that $y\in \Pi_\xi$.
}
\end{step}

\begin{step}
Fix the level $c$ such that $p\in \Delta_{S,c}$.\emph{}
\end{step}

\begin{step}
Compute the local stable and unstable manifolds of
	$p$ inside the component $\Delta_{\xi,c}\subseteq \Delta_{S,c}$ that contains $p$.\emph{}
\end{step}

\begin{step}
Iterate the local stable manifold  backward and the local unstable manifold  forward,
looking for transversal intersections.

\emph{}
\end{step}

\bigskip


\section{Examples}
\label{example section}

We will now present two examples, both replicators, 
illustrating the procedure  detailed in the previous section.
The second example belongs to a class of systems studied by Wang~\etal\cite{wang-wu-ruan}.

By Theorem~\ref{thm horse-shoes} the dynamics of these two systems  are chaotic, \ie their flows contain horse-shoes, in  sufficiently  large levels.


\bigskip
\subsection{Example 1}

Consider the replicator system defined by matrix
\[A=\left(\begin{array}{rrrrrr}
			0  & -2 & 2  & 0  & 0  & 3  \\
			2  & 0  & -2 & 0  & 0  & 0  \\
			-2 & 2  & 0  & -2 & 2  & 0  \\
			0  & 0  & 2  & 0  & -2 & 0  \\
			0  & 0  & -2 & 2  & 0  & -3 \\
			-3 & 0  & 0  & 0  & 3  & 0
			  \end{array}\right)\,.\]

We denote by $X_A$ the vector field associated to this replicator defined on the simplex  $\Delta^5\,.$
The point
$$ q= \left(\frac{72}{245},\frac{33}{280},\frac{72}{245},\frac{33}{280},\frac{72}{245},-\frac{23}{196}\right)\in\Rr^6 $$
satisfies
\begin{itemize}
   \item[(1)] $(Aq)_1=(Aq)_2=(Aq)_3=(Aq)_4=(Aq)_5=0$;
   \item[(2)] $q_1+q_2+q_3+q_4+q_5=1$\,,
\end{itemize}
and hence is an equilibrium of $X_A$, see~\cite{ADP2015}*{Definition 4.1}.
Since matrix $A$ is skew-symmetric, the associated replicator is conservative, \ie $X_A$ is Hamiltonian with respect to some stratified Poisson structure on $\Delta^5$, see~\cite{ADP2015}*{Definition 4.3, Proposition 12}.

The polytope $\Delta^5$ has five faces labeled by an index $j$ ranging from $1$ to $6$, and 
designated by $\sigma_1,\ldots, \sigma_6$.
The vertexes of the  phase space  $\Delta^5$  are also labeled by $i\in \{1,\dots,6\}$, where the label $i$ stands for the point $e_i\in \Delta^5$.
To simplify the notation we designate the simplex's vertexes by  $v_1,\ldots, v_6$.
The skeleton character $\chi_A$ of $X_A$ is displayed in Table~\ref{ex1:chars}.

\begin{table}[h] 
\centering
\begin{tabular}{c||rrrrrr}
 
\,$\chi^v_\sigma$ & $\sigma_1$  & $\sigma_2$  & $\sigma_3$  & $\sigma_4$  & $\sigma_5$ & $\sigma_6$ \\
\hline \hline \\[-4mm]
\,$v_1$           &  $0$   &  $-2$  &  $2$   &  $0$   &  $0$   &  $3$   \\
\,$v_2$           &  $2$   &  $0$   &  $-2$  &  $0$   &  $0$   &  $0$   \\
\,$v_3$           &  $-2$  &  $2$   &  $0$   &  $-2$  &  $2$   &  $0$   \\
\,$v_4$           &  $0$   &  $0$   &  $2$   &  $0$   &  $-2$  &  $0$   \\
\,$v_5$           &  $0$   &  $0$   &  $-2$  &  $2$   &  $0$   &  $-3$  \\
\,$v_6$           &  $-3$  &  $0$   &  $0$   &  $0$   &  $3$   &  $0$   \\
\hline \hline
\end{tabular}
\vspace{.3cm}
\caption{\footnotesize{The skeleton character $\chi_A$ of $X_A$.}}
\label{ex1:chars}
\end{table}

The edges of $\Delta^5$ are designated by $\gamma_1,\ldots, \gamma_{15}$,
according to Table~\ref{ex1:edges}, where we write $\gamma=(i\,j)$ to mean that
$\gamma$ is an edge connecting the vertexes $v_i$ and $v_j$.
This model has $15$ edges: $7$ neutral edges, $\gamma_3, \gamma_4, \gamma_7, \gamma_8, \gamma_9, \gamma_{12}, \gamma_{14}$,
and $8$ flowing-edges, $\gamma_1, \gamma_2, \gamma_5 \gamma_6, \gamma_{10}, \gamma_{11}, \gamma_{13}, \gamma_{15}$. The flowing-edge directed graph of  $\chi_A$  is depicted in Figure~\ref{oriented_graph_type5+1}.

\begin{table}[h]
\centering
\begin{tabular}[c]{rrrrr}
\\
\hline
\\[-3mm]
$\gamma_1=(1\, 2)$    & $\gamma_2=(1\,3)$    & $\gamma_3=(1\,4)$    & $\gamma_4=(1\,5)$    & $\gamma_5=(1\,6)$\\
$\gamma_6=(2\,3)$    & $\gamma_7=(2\,4)$    & $\gamma_8=(2\,5)$    & $\gamma_9=(2\,6)$    & $\gamma_{10}=(3\,4)$\\
$\gamma_{11}=(3\,5)$ & $\gamma_{12}=(3\,6)$ & $\gamma_{13}=(4\,5)$ & $\gamma_{14}=(4\,6)$ & $\gamma_{15}=(5\,6)$
\hspace{-.27cm} \vspace{1mm} \\
\hline \vspace{-.2cm}
\end{tabular}
\caption{\footnotesize{Edge labels.}}\label{ex1:edges}
\end{table}

From this graph we can see that
$$ S=\{ \, \gamma_6=(2\,3), \, \gamma_{10}=(3\,4) \, \}  $$
is a  structural set for $\chi_A$, see Definition~\ref{structural:set},
whose $S$-branches denoted by $\xi_1,\dots,\xi_5$ are displayed in Table~\ref{branch_table_ex1},
where we write $\xi_i=(j\, k\, l\, \dots)$ to mean that
$\xi_i$ is a path from vertex $v_j$ passing along vertices $v_k, v_l, \dots$\,.

\begin{figure}[h]
\begin{center}
\includefigure{width=8cm}{oriented_graph_type5+1}
\vspace{-.3cm}
\caption{\footnotesize{The oriented graph of $\chi_A$.}} \label{oriented_graph_type5+1}
\end{center} 
\end{figure}

\begin{table}[h]
\centering
\begin{tabular}{c|c|c}
From\textbackslash To & $\gamma_6=(2\,3)$ & $\gamma_{10}=(3\,4)$ \vspace{1mm} \\
\hline 
\\[-3mm]
$\gamma_6=(2\,3)$ & $\xi_1=(2\, 3\, 1\, 2\, 3)$ &
\; $\xi_2=(2\, 3\, 4)$  \; \vspace{1mm} \\
\hline
\\[-3mm]
\multirow{2}{*}{$\gamma_{10}=(3\,4)$} & $\xi_3= (3\, 4\, 5\, 3\, 1\, 2\, 3)$ &
 $\xi_5=(3\, 4\, 5\, 3\, 4)$ \\
 & $\xi_4=(3\,4\,5\,6\,1\,2\,3)$ &  \; \vspace{1mm} \\
\hline
\end{tabular}
\vspace{.3cm}
\caption{\footnotesize{$S$-branches of  $\chi_A$.}} \label{branch_table_ex1}
\end{table}

Consider now the subspaces of $\Rr^6$
$$ H=\{x\in\Rr^6\,\,:\,\,\sum_{i=1}^6 x_i=1\}\; \text{ and }
\; H_0=\{x\in\Rr^6\,\,:\,\,\sum_{i=1}^6 x_i=0\}\,. $$

For the given matrix $A$, its null space $\Ker(A)$ has dimension $2$.
Take a non-zero vector $w\in \Ker(A)\cap H_0$. The set of equilibria  
of the natural extension of $X_A$ to the affine hyperplane $H$
is
$$ \Eq(X_A)=\Ker(A)\cap H=\{q+tw \colon t\in\Rr\}\,. $$

The Hamiltonian of $X_A$ is the function $h_q:\Delta^5\to\Rr$  
$$ h_q(x):=\sum_{i=1}^6 q_i\log x_i\,,$$
where $q_i$ is the $i$-th component of the equilibrium point $q$.
Another  integral of motion of $X_A$ is the function $h_w:\Delta^5\to\Rr$ 
$$ h_w(x):=\sum_{i=1}^6 w_i\log x_i\,,$$
where $w_i$ is the $i$-th component of $w$,
which is a Casimir of the underlying Poisson structure.

The skeletons  of $h_q$ and
$h_w$ are respectively
$\eta_q, \eta_w:\CC^\ast(\Delta^5)\to\Rr$,
$$ \eta_q(y):=\sum_{i=1}^6 q_iy_i \quad \textrm{and} \quad
\eta_w(y):=\sum_{i=1}^6 w_i y_i\, , $$
which we use to define $\eta:\CC^\ast(\Delta^5)\to\Rr^2$,
$\eta(y):=(\eta_q(y),\eta_w(y))$.

Consider the skeleton flow map  $\skPoin{\chi}{S}:\skDomPoin{\chi}{S} \to\Pi_{S}$ of $\chi_A$,
see Definition~\ref{def skeleton flow map}.
Notice that $\skDomPoin{\chi}{S}=\Pi_{\gamma_6}\cup \Pi_{\gamma_{10}}$, where by Proposition~\ref{partition}
$\Pi_{\gamma_6}=\skDomPoin{\chi}{\xi_1}\cup \skDomPoin{\chi}{\xi_2}\pmod{0}$
and
$\Pi_{\gamma_{10}}=\skDomPoin{\chi}{\xi_3}\cup \skDomPoin{\chi}{\xi_4}\cup \skDomPoin{\chi}{\xi_5}\pmod{0}$.
By Proposition~\ref{la_inv}, the function  $\eta$ is invariant under $\pi_S$.
For all $i=1,\dots,5$, the polyhedral cone $\Pi_{\xi_i}$ has dimension $4$.
Hence, each polytope $\Delta_{\xi_i,c}:=\Pi_{\xi_i}\cap \eta^{-1}(c)$   is a $2$-dimensional polygon.

By invariance of   $\eta$,
the set $\Delta_{S,c}$ is also invariant under $\pi_S$. 
Consider now the restriction ${\pi_S}_{\vert \Delta_{S,c}}$ of $\skPoin{\chi}{S}$ to $\Delta_{S,c}$.
This is a piecewise affine area preserving map, see Remark~\ref{rmk conservative 2}.
Figure~\ref{stab_unstab_equilib_ex1} shows 
the domain  $\Delta_{S,c}$ and $10\,000$  iterates by  ${\pi_S}$  of a point in $\Delta_{S,c}$.
Following the itinerary of a random point we have picked the following
heteroclinic cycle consisting of $11$ $S$-branches
\begin{align*}
\label{concat_path_ex1}
& \xi:=(\xi_1,\xi_2,\xi_4,\xi_2,\xi_4,\xi_2,\xi_4,\xi_2,\xi_4,\xi_2,\xi_4)\,.
\end{align*}

The   map $\pi_\xi$   is
represented by the matrix,
 see definitions~\eqref{matrix-M} and~\eqref{poincare_path_matrix},
\[M_\xi =\left(\begin{array}{cccccc}
 -10 & -5 & 0 & 6 & 1 & -\frac{10}{3} \\
 0 & 0 & 0 & 0 & 0 & 0 \\
 0 & 0 & 0 & 0 & 0 & 0 \\
 1 & 1 & 0 & 0 & 0 & \frac{2}{3} \\
 10 & 5 & 1 & -5 & 0 & \frac{8}{3} \\
 -\frac{3}{2} & 0 & 0 & \frac{3}{2} & 0 & 0 \\
						\end{array}\right)\,.\]

The eigenvalues of $M_\xi$, besides $0$ and $1$ (both with geometric multiplicity $2$), are (approximately)
$$ \lambda_u=-11.9161, \quad \textrm{and} \quad \lambda_s=-0.0839202\,. $$
The corresponding eigenvectors are
$$ w_u=(-0.734728, 0., 0., 0.067307, 0.667421, -0.10096)\,, $$
$$ w_s=(0.328842, 0., 0., 0.358966, -0.687808, -0.538449)\,. $$
An eigenvector associated to the eigenvalue $1$ is
$$ \mathbf{p_{\scriptsize{0}}}=(0.20512, 0, 0, 0.325586, 0.905134, 0.180699)\,. $$
Notice that this  $\mathbf{p_{\scriptsize{0}}}$ is not unique because $\dim( \Ker(M_\xi-I))=2$.
We  have chosen  $c:=(c_1,c_2)=(0.343447,-0.242852)$ so that $\eta(\mathbf{p_{\scriptsize{0}}})=c$, \ie
$\mathbf{p_{\scriptsize{0}}}\in \Delta_{S,c}$.
 In fact we have $\mathbf{p_{\scriptsize{0}}}\in\Delta_{\xi_1,c}\subset\Delta_{\gamma_6,c}$.  Hence $\mathbf{p_{\scriptsize{0}}}$ is a periodic point of the skeleton flow map $\skPoin{\chi}{S}$ with period $11$.

 Figure~\ref{stab_unstab_equilib_ex1} also depicts
 the polygons  $\Delta_{\xi_1,c}, \Delta_{\xi_2,c}$ contained in $\Delta_{\gamma_6}$, and
$\Delta_{\xi_4,c}, \Delta_{\xi_5,c}$ contained in $\Delta_{\gamma_{10}} $.
The set $\Delta_{\xi_3,c}$ is empty for this choice of $c$.
The orbit of  $\mathbf{p_{\scriptsize{0}}}$
is represented by the white dots in Figure~\ref{stab_unstab_equilib_ex1}.

\begin{figure}[!]
\begin{center}
\includefigure{width=13cm}{Equilibr_stbl_unstbl_ex1}
\caption{ Homoclinic intersections 
	for the periodic orbit of $\mathbf{p_{\scriptsize{0}}}$
	under the  area preserving map ${\pi_S}_{\vert \Delta_{S,c}}$.	
}\label{stab_unstab_equilib_ex1}
\end{center} 
\end{figure}

Let $\ell_0^u$ and $\ell_0^s$ be  line segments through $\mathbf{p_{\scriptsize{0}}}$, contained in $\Delta_{\xi_1,c}$, respectively aligned with the eigen-directions $w_s$
and $w_u$.
We denote by $\ell_n^u$ the $n$-th forward $\pi_{S}$-iterate of $\ell_0^u$ and by $\ell_{-m}^s$ the $m$-th backward
$\pi_{S}$-iterate of $\ell_0^s$, \ie
$$ \ell_n^u:=\pi_{S}^n(\ell_0^u) \quad \textrm{and} \quad \ell_{-m}^s:=\pi_{S}^{-m}(\ell_0^s)\,. $$

Let $\mathbf{p_{\scriptsize{k}}}=\skPoin{\chi}{S}^k(\mathbf{p_{\scriptsize{0}}})$ and notice that $\mathbf{p_{\scriptsize{10}}}=\skPoin{\chi}{S}^{10}(\mathbf{p_{\scriptsize{0}}})=
\skPoin{\chi}{S}^{-1}(\mathbf{p_{\scriptsize{0}}})=\mathbf{p_{\scriptsize{-1}}}$.
Figure~\ref{stab_unstab_equilib_ex1} also shows that in a few iterates 
transversal intersections occur between the ``local stable'' and the ``local unstable'' manifolds of different points of the periodic orbit of $\mathbf{p_{\scriptsize{0}}}$.
Namely, $\ell_{-6}^s\cap\ell_9^u\neq\emptyset$ and 
$\ell_{-5}^s\cap\ell_{10}^u\neq\emptyset$.

By Theorem~\ref{thm horse-shoes} this implies the existence of
chaotic behavior for the flow of $X_A$ in some level set
$h_q^{-1}(c_1/\epsilon)\cap h_w^{-1}(c_2/\epsilon)$, with $c=(c_1,c_2)$ chosen above and for all small enough  $\epsilon>0$.


\bigskip
\subsection{Example 2}

Consider the replicator system defined by matrix
\[B=\left(\begin{array}{cccccc}
 0 & 1 & -2 & 0 & 2 & -1 \\
 -1 & 0 & 1 & -2 & 0 & 2 \\
 2 & -1 & 0 & 1 & -2 & 0 \\
 0 & 2 & -1 & 0 & 1 & -2 \\
 -2 & 0 & 2 & -1 & 0 & 1 \\
 1 & -2 & 0 & 2 & -1 & 0 \\
			\end{array}\right)\,.\]

We denote by $X_B$ the vector field associated to this replicator defined on the simplex  $\Delta^5\,.$
The point
$$ q= \left(\frac{1}{6},\frac{1}{6},\frac{1}{6},\frac{1}{6},\frac{1}{6},\frac{1}{6}\right)\in\Rr^6 $$
 is an  equilibrium of the replicator $X_B$.
Since matrix $B$ is skew-symmetric, the associated replicator is conservative, \ie  $X_B$ is Hamiltonian with respect to some stratified Poisson structure on $\Delta^5$.

Using the notation of the previous example,
the skeleton character $\chi_B$ of $X_B$ is displayed in Table~\ref{ex2:chars}.
This model has $15$ edges:
$3$ neutral edges, $\gamma_3$, $\gamma_8$, $\gamma_{12}$,
and $12$ flowing-edges, $\gamma_1$, $\gamma_2$, $\gamma_4$, $\gamma_5$, $\gamma_6$, $\gamma_7$, $\gamma_9$, $\gamma_{10}$, $\gamma_{11}$, $\gamma_{13}$, $\gamma_{14}$, $\gamma_{15}$.
The flowing-edge directed graph of  $\chi$  is represented in  Figure~\ref{oriented_graph_wang}.

\begin{table}[h] 
\centering
\begin{tabular}{c||rrrrrr}
 
\,$\chi^v_\sigma$ & $\sigma_1$  & $\sigma_2$  & $\sigma_3$  & $\sigma_4$  & $\sigma_5$ & $\sigma_6$ \\
\hline \hline \\[-4mm]
\,$v_1$           &  $0$   &  $1$    &  $-2$   &  $0$    &  $2$    &  $-1$   \\
\,$v_2$           &  $-1$  &  $0$    &  $1$    &  $-2$   &  $0$    &  $2$   \\
\,$v_3$           &  $2$   &  $-1$   &  $0$    &  $1$    &  $-2$   &  $0$   \\
\,$v_4$           &  $0$   &  $2$    &  $-1$   &  $0$    &  $1$    &  $-2$   \\
\,$v_5$           &  $-2$  &  $0$    &  $2$    &  $-1$   &  $0$    &  $1$  \\
\,$v_6$           &  $1$   &  $-2$   &  $0$    &  $2$    &  $-1$   &  $0$   \\
\hline \hline
\end{tabular}
\vspace{.3cm}
\caption{\footnotesize{The skeleton character of $X_B$.}}
\label{ex2:chars}
\end{table}


From this graph we can see that
$$ S=\{ \, \gamma_1=(1\,2), \, \gamma_4=(1\,5), \, \gamma_7=(2\,4), \, \gamma_{10}=(5\,4) \, \}  $$
is a  structural set for $\chi_B$,
whose $S$-branches denoted by $\xi_1,\dots,\xi_{32}$ are displayed in Table~\ref{branch_table_ex2}.

\begin{figure}[h]
\begin{center}
\includefigure{width=7cm}{oriented_graph_wang}
\vspace{-.3cm}
\caption{\footnotesize{The oriented graph of $\chi$.}} \label{oriented_graph_wang}
\end{center} 
\end{figure}

\begin{table}[h]
\scriptsize{
\centering
\begin{tabular}{c|c|c|c|c}
From\textbackslash To & $\gamma_1=(2\,1)$ & $\gamma_4=(5\,1)$ & $\gamma_7=(2\,4)$ & $\gamma_{10}=(5\,4)$ \vspace{1mm} \\
\hline 
\\[-3mm]
\multirow{2}{*}{$\gamma_1=(2\,1)$} & $\xi_1= (2\,1\,3\,2\,1) $ & $\xi_3=(2\,1\,3\,5\,1)$ &
$\xi_5= (2\,1\,3\,2\,4)$ & $\xi_7=(2\,1\,3\,5\,4)$ \\
& $\xi_2= (2\,1\,6\,2\,1)$ & $\xi_4=(2\,1\,6\,5\,1)$ & $\xi_6= (2\,1\,6\,2\,4)$ &
 $\xi_8=(2\,1\,6\,5\,4)$   \\
\hline
\\[-3mm]
\multirow{2}{*}{$\gamma_4=(5\,1)$} & $\xi_9= (5\,1\,3\,2\,1)$ & $\xi_{11}=(5\,1\,3\,5\,1)$ &
$\xi_{13}= (5\,1\,3\,2\,4)$ & $\xi_{15}=(5\,1\,3\,5\,4)$ \\
& $\xi_{10}= (5\,1\,6\,2\,1)$ & $\xi_{12}=(5\,1\,6\,5\,1)$ & $\xi_{14}= (5\,1\,6\,2\,4)$ &
 $\xi_{16}=(5\,1\,6\,5\,4)$  \\
\hline
\\[-3mm]
\multirow{2}{*}{$\gamma_7=(2\,4)$} & $\xi_{17}= (2\,4\,3\,2\,1)$ & $\xi_{19}=(2\,4\,3\,5\,1)$ &
$\xi_{21}= (2\,4\,3\,2\,4)$ & $\xi_{23}=(2\,4\,3\,5\,4)$ \\
& $\xi_{18}= (2\,4\,6\,2\,1)$ & $\xi_{20}=(2\,4\,6\,5\,1)$ & $\xi_{22}= (2\,4\,6\,2\,4)$ &
 $\xi_{24}=(2\,4\,6\,5\,4)$  \\
\hline
\\[-3mm]
\multirow{2}{*}{$\gamma_{10}=(5\,4)$} & $\xi_{25}= (5\,4\,3\,2\,1)$ & $\xi_{27}=(5\,4\,3\,5\,1)$ &
$\xi_{29}= (5\,4\,3\,2\,4)$ & $\xi_{31}=(5\,4\,3\,5\,4)$ \\
& $\xi_{26}= (5\,4\,6\,2\,1)$ & $\xi_{28}=(5\,4\,6\,5\,1)$ & $\xi_{30}= (5\,4\,6\,2\,4)$ &
 $\xi_{32}=(5\,4\,6\,5\,4)$  \\
\hline
\end{tabular}
\vspace{.3cm}
\caption{\footnotesize{$S$-branches of  $\chi$.}} \label{branch_table_ex2}
}
\end{table}

Consider the affine subspaces $H,H_0\subset \Rr^6$
of the previous example. For the given matrix $B$, its null space $\Ker(B)$ has dimension $2$.
Take a non-zero vector $w\in \Ker(B)\cap H_0$. As before,
$\{q+tw \colon t\in\Rr\}$ is the set of equilibria   of $X_B$ on the affine hyperplane $H$. The same functions $h_q$ and $h_w$
are respectively the Hamiltonian and an  integral of motion.
The skeletons of these functions  are respectively
$\eta_q$ and $ \eta_w$, which we take as the components of a piecewise linear
function $\eta:\CC^\ast(\Delta^5)\to\Rr^2$.
By Proposition~\ref{la_inv},   $\eta$  is invariant under $\pi_S$.
The  map  $\pi_{S}$ acts on  
 $\Pi_{S}=\Pi_{\gamma_1}\cup\Pi_{\gamma_4}\cup\Pi_{\gamma_7}\cup\Pi_{\gamma_{10}}$,
where
\begin{itemize}
	\item $\Pi_{\gamma_1}=\skDomPoin{\chi}{\xi_1} \cup \skDomPoin{\chi}{\xi_2} \cup \dots \cup \skDomPoin{\chi}{\xi_8}\pmod{0}$,
	\item $\Pi_{\gamma_4}=\skDomPoin{\chi}{\xi_9} \cup \skDomPoin{\chi}{\xi_{10}} \cup \dots \cup \skDomPoin{\chi}{\xi_{16}}\pmod{0}$,
	\item $\Pi_{\gamma_7}=\skDomPoin{\chi}{\xi_{17}} \cup \skDomPoin{\chi}{\xi_{18}} \cup \dots \cup \skDomPoin{\chi}{\xi_{24}}\pmod{0}$,
	\item $\Pi_{\gamma_{10}}=\skDomPoin{\chi}{\xi_{25}} \cup \skDomPoin{\chi}{\xi_{26}} \cup \dots \cup \skDomPoin{\chi}{\xi_{32}}\pmod{0}$.
\end{itemize}
For all $i=1,\ldots, 32$, the polyhedral cone  $\Pi_{\xi_i}$ has dimension $4$ while $\Delta_{\xi_i,c}$ is a $2$-dimensional polygon.
The $2$-dimensional  level set $\Delta_{S,c}$ is invariant under $\pi_S$ and we denote by
${\pi_S}_{\vert \Delta_{S,c}}$ the restriction of $\skPoin{\chi}{S}$ to $\Delta_{S,c}$.
Figure~\ref{stab_unstab_equilib_ex2}  shows the domain $\Delta_{S,c}$ and $25\, 000$ iterates by $\skPoin{\chi}{S}$   of a point in $\Delta_{S,c}$
with random like distribution.
Following the itinerary of a random point we have picked a heteroclinic cycle $\xi$ consisting of $13$ $S$-branches
\begin{align*}
\label{concat_path_ex2}
& \xi:=(\xi_{31},\xi_{32},\xi_{32},\xi_{28},\xi_{12},\xi_{10},\xi_2,\xi_1,\xi_5,\xi_{21},\xi_{21},\xi_{23},\xi_{31})\,.
\end{align*}
The   skeleton flow map $\pi_\xi$   is
represented by the matrix
$$ M_\xi =\left(\begin{array}{cccccc}
 1 & \frac{3}{2} & \frac{51}{8} & -\frac{35}{4} & -\frac{33}{8} & -\frac{15}{4} \\
 0 & -\frac{1}{2} & -\frac{21}{8} & \frac{21}{4} & \frac{23}{8} & \frac{9}{4} \\
 0 & -\frac{3}{2} & -\frac{43}{8} & \frac{35}{4} & \frac{41}{8} & \frac{15}{4} \\
 0 & 0 & 0 & 0 & 0 & 0 \\
 0 & 0 & 0 & 0 & 0 & 0 \\
 0 & \frac{3}{2} & \frac{21}{8} & -\frac{17}{4} & -\frac{23}{8} & -\frac{5}{4} \\
						\end{array}\right)\,. $$

The eigenvalues of $M_\xi$, besides $0$ and $1$ (both with geometric multiplicity $2$), are 
$$ \lambda_u= -8, \quad \textrm{and} \quad \lambda_s=-\frac{1}{8}\,. $$
The corresponding eigenvectors are
$$ w_u=(2, -1, -2, 0, 0, 1)\,, $$
$$ w_s=(-1, -1, 1, 0, 0, 1)\,. $$
An eigenvector associated to the eigenvalue $1$ is
$$ \mathbf{p_{\scriptsize{0}}}=(0.62, 0.304, 0.152, 0, 0, 0.38)\,. $$
Notice that this  $\mathbf{p_{\scriptsize{0}}}$ is not unique because $\dim( \Ker(M_\xi-I))=2$.
We  have chosen  $c:=(c_1,c_2)=(0.242667, -0.088)$ so that $\eta(\mathbf{p_{\scriptsize{0}}})=c$, \ie
$\mathbf{p_{\scriptsize{0}}}\in \Delta_{S,c}$.
In fact we have $\mathbf{p_{\scriptsize{0}}}\in\Delta_{\xi_{31},c}\subset\Delta_{\gamma_{10},c}$.  Hence $\mathbf{p_{\scriptsize{0}}}$ is a periodic point of the skeleton flow map $\pi_{S}$ with period $13$.

Figure~\ref{stab_unstab_equilib_ex2} also depicts
the polygons $\Delta_{\xi_1,c}, \Delta_{\xi_2,c},\Delta_{\xi_5,c}$ contained in $\Delta_{\gamma_1}$,
$\Delta_{\xi_{10},c}, \Delta_{\xi_{11},c},\Delta_{\xi_{12},c}$ contained in $\Delta_{\gamma_4}$,
$\Delta_{\xi_{21},c}, \Delta_{\xi_{22},c},\Delta_{\xi_{23},c}$ contained in $\Delta_{\gamma_7}$ and 
$\Delta_{\xi_{28},c}, \Delta_{\xi_{31},c},\Delta_{\xi_{32},c}$ contained in $\Delta_{\gamma_{10}}$.
The remaining sets $\Delta_{\xi_i,c}$
are empty for this choice of $c$.
The orbit of  $\mathbf{p_{\scriptsize{0}}}$
is represented by the white dots in Figure~\ref{stab_unstab_equilib_ex2}.

\begin{figure}[!]
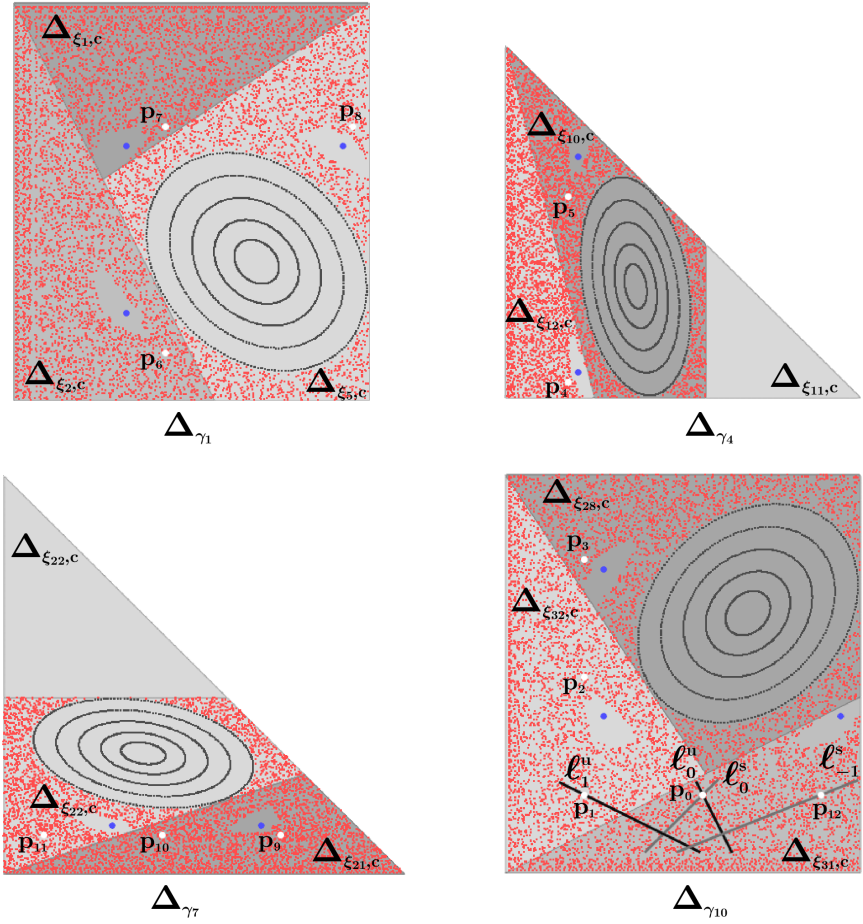

\begin{center}
\includefigure{width=12cm}{Equilib_stbl_unstbl_wang_1}
\includefigure{width=12cm}{Equilib_stbl_unstbl_wang_2}
\caption{Homoclinic intersections 
	for the periodic orbit of $\mathbf{p_{\scriptsize{0}}}$
	under the  area preserving map ${\pi_S}_{\vert \Delta_{S,c}}$.	
}\label{stab_unstab_equilib_ex2}
\end{center} 
\end{figure}

Let $\ell_n^u$ and $\ell_{-m}^s$ denote the stable and unstable
local manifolds along the orbit of $\mathbf{p_{\scriptsize{0}}}$, the notation introduced in the previous example.
Write $\mathbf{p_{\scriptsize{k}}}=\skPoin{\chi}{S}^k(\mathbf{p_{\scriptsize{0}}})$ and notice that $\mathbf{p_{\scriptsize{12}}}=\skPoin{\chi}{S}^{12}(\mathbf{p_{\scriptsize{0}}})=
\skPoin{\chi}{S}^{-1}(\mathbf{p_{\scriptsize{0}}})=\mathbf{p_{\scriptsize{-1}}}$.

Figure~\ref{stab_unstab_equilib_ex2} also shows that in the first forward and backward iterate (by the skeleton flow map)
transversal intersections occur between the ``local stable'' and the ``local unstable'' manifolds of different points of the periodic orbit of $\mathbf{p_{\scriptsize{0}}}$.
Namely, $\ell_1^u\cap\ell_0^s\neq\emptyset$, $\ell_{-1}^s\cap\ell_0^u\neq\emptyset$ and 
$\ell_1^u\cap\ell_{-1}^s\neq\emptyset$.

By Theorem~\ref{thm horse-shoes} this implies the existence of
chaotic behavior for the flow of the replicator $X_B$.

\section{Conclusions and furtherwork}
\label{furtherwork}

For the Hamiltonian polymatrix replicator systems,
 alluded in Remark~\ref{rmk conservative 1} and studied in~\cite{AD2014} by the first two authors, their invariant algebraic Poisson structures induce stratified piecewise constant  Poisson structures on the dual cone, preserved by the corresponding skeleton flow.
 In other words, the skeleton flow inherits the conservative Hamiltonian  nature of the original polymatrix replicator vector field. This subject will be addressed  by the authors in a future work. Remark~\ref{rmk conservative 2} follows from this theory.

In the  Hamiltonian examples discussed in Section~\ref{example section}, chaotic  (hyperbolic) and regular  (elliptic) behavior co-exist. In both of them, the skeleton flow maps   are piecewise linear area preserving maps.
Example 2 exhibits a few elliptic periodic points surrounded by invariant curves
that we will refer to as \textit{elliptic domains}\footnote{ These are not KAM islands because the piecewise linearity of $\pi_S$ does not allow any twist.}, see Figure~\ref{stab_unstab_equilib_ex2}.
Notice how the invariant curves break up as they touch the boundary 
of the associated domain $\Delta_{\xi,c}$. Outside these elliptic domains, a chaotic sea (`random' orbits  with  positive Lyapunov exponents) seems to prevail. In Figure~\ref{stab_unstab_equilib_ex2} we can also see 
a couple of triangular components, $\Delta_{\xi_{11},c}$, $\Delta_{\xi_{22},c}$, consisting of $\pi_S$-fixed points  and  
$10$  cyclically permuted small islands  each being a  \textit{continuum} of periodic points. 
These examples solicit  the development of an ergodic theory for the class of piecewise linear area preserving maps, and  more generally for the class of piecewise linear symplectic maps.
We mention a few natural questions about the generic behavior of these systems: \textit{Can  the number of elliptic domains be infinite? 
Is the complement of the elliptic domains non-uniformly hyperbolic with respect to Lebesgue measure? Is this complement typically ergodic?  Can it have infinitely many ergodic components?}
In the context of smooth area preserving maps these are  very hard open problems, but due to their dynamical  rigidity  these sort of problems might be much more feasible for piecewise linear area preserving maps.
Such a theory would provide a good insight on the asymptotic dynamics (along the vertex-edge network) for the classes of Hamiltonian systems on polytopes mentioned above.

General vector fields $X\in\fX(\Gamma^d)$ typically  do not have  any integral of motion and the analysis of their dynamics must be different from the conservative case. The skeleton flow map $\pi_S:\Pi_S\to\Pi_S$ can be projectivized  as follows. Take $\eta:\CC^\ast(\Gamma^d)\to\RR$ to be the piecewise linear function
$\eta(y):=\sum_{\sigma\in F} y_\sigma$ and define
$\Delta_\gamma:=\Pi_\gamma\cap \eta^{-1}(1)$,
$\Delta_\xi:=\Pi_\xi\cap \eta^{-1}(1)$ as before.
The simplex $\Delta_\gamma$ can be viewed as the projectivization of the sector $\Pi_\gamma$  because every half-line through the origin
in $\Pi_\gamma$  intersects $\Delta_\gamma$ at a single point. Likewise  $\Delta_\xi$ is the projectivization of  $\Pi_\xi$. 
If $\xi$ is a heteroclinic path ending at  some flowing edge $\gamma$ then  the linear map
$\pi_\xi:\Pi_\xi\to \Pi_\gamma$ induces a projective map
$\hat \pi_S:\Delta_\xi\to\Delta_\gamma$ 
defined by $\hat \pi_\xi(y):=\eta(\pi_\xi(y))^{-1} \pi_\xi(y)$.
These are the branches of the \textit{projective skeleton map}
$\hat \pi_S:\Delta_S\to \Delta_S$ defined on 
$\Delta_S:=\cup_{\xi\in\Bscr(\chi)} \Delta_\xi$ by
$\hat \pi_S(y):=\hat \pi_\xi(y)$ if $y\in \Delta_\xi$
for some $S$-branch $\xi$. The suspension of the projective map
$\hat \pi_S$ on $\Delta_S$ can be viewed as a \textit{blowup} of the flow $\varphi_X^t$ along the  polytope's boundary, \ie $\hat \pi_S$
extends the dynamics of $\varphi_X^t$ to the \textit{blown-up} polytope's boundary.  In the conservative case, if $h\in\HH(\Gamma^d)$ is a proper $X$-invariant function with skeleton $\eta$ and the same order function as $X$, the projective skeleton map $\hat \pi_S$
rules the common dynamics on all level sets of $\eta$.

The map  $\pi_S$  factors through $\hat \pi_S$ acting linearly on the fibers.
Hence $\pi_S$  may be regarded as a $1$-dimensional linear cocycle over $\hat \pi_S$, where the sign of its Lyapunov exponent gives  the repelling \textit{vs} attracting nature of the asymptotic boundary dynamics.
Given a heteroclinic cycle $\xi$,
if $v\in \Delta_\xi$ is an eigenvector of $M_\xi$ with a positive eigenvalue then $v$ is a periodic point of $\hat \pi_S$ 
whose nature can be read from the spectrum of $M_\xi$.
This spectrum also determines whether the heteroclinic cycle $\xi$
is attracting  or repelling. If a compact  $\hat \pi_S$-invariant set is partially hyperbolic (with a central direction of co-dimension $1$) regarded as an invariant subset of the blown-up boundary of the flow $\varphi_X^t$ then it determines a local strong stable/unstable foliation in the polytope's interior. The special case where this
compact invariant set is a single  periodic orbit provides an invariant (local stable/unstable) manifold of the heteroclinic cycle associated with the periodic orbit. These dynamical  foliations and invariant manifolds are useful tools to analyze the dynamics in the polytope's interior, part of a theory being developed in a work under preparation.
This theory could for instance help to provide sufficient conditions for permanence, an important concept in EGT. 
	In this spirit, a theorem of Jansen~\cite{Jan1987} with a game flavored sufficient criteria for permanence, in the framework of replicator dynamics, was recently extended by the third author to the broader class of  polymatrix replicators~\cite{TP2018}.

Although the piecewise linear maps $\pi_S$ are in general discontinuous, because orbits in adjacent domains  eventually diverge, in some cases $\pi_S$ is continuous throughout several  neighboring domains. This implies that the rescaling along the vertex-edge polytope' skeleton can be augmented to include some higher 
dimensional faces of the polytope. An extreme example is the $3$-dimensional Hamiltonian  depicted in Figure~\ref{cons222},
which has a globally continuous  skeleton flow. In this model the
rescaling can be augmented to include the whole cube's boundary.  This  will the subject of another future work.

This theory can be applied to most ODE models in EGT.
For systems depending on many parameters, an algorithmic analysis  of the skeleton (asymptotic) dynamics 
can split the space of parameters into regions where the
dynamics of the skeleton flow maps are qualitatively similar.
This would help to understand the bifurcations taking place in the polytope's interior as the parameters cross the boundary between adjacent regions. For example, higher dimensional cases of the systems studied at \cite{wang-wu-ruan} could be investigated.
In each parametric region, the mentioned tools can be used to  detect and characterize some of its invariant dynamical structures such as  heteroclinic cycles, periodic points, hyperbolic invariant sets, invariant manifolds and invariant foliations, which are 
essential to understand the model's dynamics in the polytope's interior. In some future work the authors plan  to illustrate
this approach with the analysis of some concrete EGT model.




\section*{Acknowledgments}

The first author was supported by mathematics department of UFMG. 
The second author was supported  by Funda\c{c}\~{a}o para a Ci\^{e}ncia e a Tecnologia, under the project: UID/MAT/04561/2013.
The third author was supported by FCT scholarship  SFRH/BD/72755/2010 and
by the Project CEMAPRE - UID/MULTI/00491/2013 financed by FCT/MCTES through national funds.


\bibliographystyle{amsplain} 
\bibliography{references}

\end{document}